\definecolor{cobalt}{rgb}{0.0, 0.28, 0.67}
\theoremstyle{definition}
\newtheorem{definition}{Definition}
\newtheorem{lemma}{Lemma}
\newtheorem{theorem}{Theorem}
\newtheorem{example}{Example}
\newtheorem{remark}{Remark}
\newtheorem{corollary}{Corollary}
\newcommand\ZZ{\mathbb Z}
\newcommand\RR{\mathbb R}
\newcommand\QQ{\mathbb Q}
\newcommand\vect[2]{\tiny
\begin{pmatrix}
#1\\
#2
\end{pmatrix}
}
\newcommand\arctanh{\operatorname{arctanh}}
\newcommand\arcsinh{\operatorname{arcsinh}}
\newcommand\topo[1]{\begin{tikzpicture}[scale=0.18,baseline=(current bounding box.center)]
  \draw[thick] (0,0) -- (-1.2,1.8);
  \draw[thick] (0,0) -- (-1.2,-1.8);
  \draw[thick] (0,0) -- (2.4,0);
  \node at (-1.5,0) {\small $r$};
  \node at (1.4,1.1) {\small $s$};
  \node at (1.4,-1.1) {\small $t$};
  \node at (4.3,0) {\small$\in \mathcal{#1}$};
\end{tikzpicture}}
\tikzset{
  small latex/.tip={Latex[scale=0.6]}  
}
\newcommand\topoo[1]{%
  \begin{tikzpicture}[baseline={(0,-0.5ex)}, scale=0.1, thick]
    \draw[-{small latex}] (0,0) -- (-1.5,2.5);
    \draw[-{small latex}] (0,0) -- (-1.5,-2.5);
    \draw[-{small latex}] (0,0) -- (3,0);

    \node[red] at (-2,2.3) {\tiny $_f$};
    \node[red] at (-2,-2.3) {\tiny $_e$};
    \node[red] at (2,1.3) {\tiny $_g$};

    \node at (7,0) {$\in \mathcal{#1}$};
  \end{tikzpicture}%
}
\begin{document}



\author{Nikita Kalinin}
\title[Telescoping over topographs]{Evaluation of lattice sums via telescoping over topographs}
 \address{Guangdong Technion Israel Institute of Technology (GTIIT),
241 Daxue Road, Shantou, Guangdong Province 515603, P.R. China,  Technion-Israel Institute of Technology, Haifa, 32000, Haifa district, Israel, nikaanspb@gmail.com}
\maketitle

\begin{abstract}
Topographs, introduced by Conway in 1997, are infinite trivalent planar trees used to visualize the values of binary quadratic forms. In this work, we study series whose terms are indexed by the vertices of a topograph and show that they can be evaluated using telescoping sums over its edges. 

Our technique provides arithmetic proofs for modular graph function identities arising in string theory, yields alternative derivations of Hurwitz-style class number formulas, and provides a unified framework for well-known Mordell-Tornheim series and Hata's series for the Euler constant $\gamma$.

Our theorems are of the following spirit: we cut a topograph along an edge (called the {\it root}) into two parts, and then sum $\frac{1}{rst}$ (the reciprocal of the product of labels on regions adjacent to a vertex) over all vertices of one part. We prove that such a sum is equal to an explicit expression depending only on the root and the discriminant of the topograph. 

Keywords: modular graph functions, lattice sums, telescoping sums, binary quadratic forms, topographs, class number, digamma function, Euler's constant. AMS classification: 11E16, 11F67, 11M35

\end{abstract}

\tableofcontents

\section{Introduction}

The purpose of this article is to introduce and develop the {\it telescoping-over-topograph} method and to connect it to the works on class number formulas for quadratic number fields (a classical one \cite{hurwitz1905darstellung} of Adolf Hurwitz and two recent ones:  \cite{duke2021class} of Duke, Imamo\=glu, T\'oth and \cite{o2024topographs} by O'Sullivan) as well as to the modular graph functions in the low energy genus-one expansion of type II string amplitudes due to d'Hoker, Green, G\"urdo\u{g}an, and Vanhove \cite{d2015modular,d2017modular}. 

The story begins with Hurwitz's 1905 formula  \eqref{hurwitz}, which expresses the number of $SL(2,\ZZ)$-equivalence classes of integer binary quadratic  forms with negative discriminant as a certain infinite sum. After nearly a century of neglect, Duke--Imamo\=glu--T\'oth established the corresponding positive-discriminant formula \eqref{duke}. O'Sullivan then recast these identities as vertex sums on Conway's topographs. In a parallel development, Zagier computed the lattice sum \eqref{eq_2} -- essentially the  keystone step in Hurwitz's argument -- which later proved to be the prototypical example among modular graph functions in string theory, as studied by d'Hoker, Green, G\"urdo\u{g}an, Vanhove, and others. 

Our contribution is a unifying and elementary viewpoint: many of these results fall out from a simple geometric telescoping argument. Below, we explain the context in greater detail, in historical order, starting from Hurwitz and moving forward.

\subsection{Formulas related to the class number} For $v=(m,n)$, let $q=[A,B,C]$ denote a quadratic form $$q(v)=Am^2+Bmn + C n^2.$$ Two binary quadratic forms $[A,B,C]$ and $[A',B',C']$ with integer coefficients are called equivalent if there exist $a,b,c,d\in\mathbb Z$ with $ad-bc=1$ such that 

$$A'm^2+B'mn+C'n^2 = A(am+bn)^2+B(am+bn)(cm+ dn)+C(cm+ dn)^2.$$
 That is,
\begin{equation*}
[A,B,C]\sim [Aa^2 +Bac+Cc^2, 2Aab+B(ad+bc)+2Ccd, Ab^2+Bbd+Cd^2].
\end{equation*}
In particular,
\begin{equation}
\label{equiv2}
[1,0,1]\sim [a^2 +c^2, 2ab+2cd, b^2+d^2] = [A,B,C], \text{ and } 
\end{equation}
$$A+B+C = (a+b)^2+(c+d)^2 \text{ in this case}.$$
The {\it discriminant} $D$ of a binary quadratic form $[A, B, C]$ is $B^2-4AC$ and it is invariant under the above equivalence. An integer $D$ is called a {\it fundamental} discriminant if either (a) $D$ is square-free and $D\equiv 1\pmod 4$, or (b) $D\equiv 0\pmod 4$, $D/4$ is square-free and $D/4\equiv 2,3\pmod 4$.

Denote by $h(D)$ the number of equivalence classes of forms with $\gcd(A, B, C)=1$ and discriminant $D$. Although formulas for $h(D)$ exist, it remains difficult to estimate its asymptotic behavior; for example, it is still an open problem to prove that there exist infinitely many values of $D>0$ for which $h(D)=1$.

In 1905, Adolf Hurwitz wrote a paper on an infinite series representation of the class number $h(D)$ in the positive-definite case.

\begin{theorem}[Hurwitz,\cite{hurwitz1905darstellung}] For $D<0$, a fundamental discriminant, all the terms  in the following sum are positive and this sum converges to $h(D)$:
\begin{equation}
\label{hurwitz}
h(D) = \frac{\omega_D}{12\pi}|D|^{3/2}\sum\limits_{\substack{A>0 \\ B^2-4AC=D}}\frac{1}{A(A+B+C)C}, \end{equation}
where 
\begin{equation}
 \omega_D=
\begin{cases}
1 \text{ for } D<-4,\\
 2 \text{ for } D=-4,\\
 3 \text{ for } D=-3.
\end{cases}
\end{equation}
\end{theorem}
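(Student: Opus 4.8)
The plan is to derive \eqref{hurwitz} from the telescoping-over-topograph method, in four stages.

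\textbf{(1) Reduction to a per-class identity.} Since $D$ is fundamental, every form of discriminant $D$ is primitive; since $D<0$, the hypothesis $A>0$ simply selects the positive-definite forms, which therefore fall into exactly $h(D)$ equivalence classes. It then suffices to show that, for each class, the partial sum of $\frac{1}{A(A+B+C)C}$ over the forms in that class equals $\frac{12\pi}{\omega_D|D|^{3/2}}$ --- a quantity depending only on $D$, not on the class. This class-independence is the heart of the statement.

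\textbf{(2) Passage to the upper half-plane and to topographs.} Send $q=[A,B,C]$ to its CM point $z_q=\frac{-B+i\sqrt{|D|}}{2A}\in\mathbb{H}$, so that $q(m,n)=A|m-nz_q|^2$ and $\operatorname{Im}z_q=\sqrt{|D|}/(2A)$; a one-line computation gives
\begin{equation*}
\frac{1}{A(A+B+C)C}=\frac{8}{|D|^{3/2}}\,g(z_q),\qquad g(z):=\frac{(\operatorname{Im}z)^3}{|z|^2\,|z-1|^2}=\operatorname{Im}(z)\,\operatorname{Im}(-\tfrac1z)\,\operatorname{Im}(\tfrac{-1}{z-1}),
\end{equation*}
so $g(z_q)$ is the product of the heights of $z_q$ above the three cusps $\infty,0,1$ of a Farey triangle. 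The forms in one class correspond to the $PSL_2(\ZZ)$-orbit of $z_q$, which $PSL_2(\ZZ)$ covers $\omega_D$-to-one (the stabiliser of a discriminant-$D$ CM point has order $\omega_D$). Using this bijection and the $3$-to-$1$ map $PSL_2(\ZZ)\to\{\text{superbases of }\ZZ^2\}$, $\gamma\mapsto\{\pm\gamma e_1,\pm\gamma e_2,\pm\gamma(e_1+e_2)\}$, together with the fact that $g$-values are reciprocal form-values, the per-class sum works out to $\frac{3}{\omega_D}\sum_v\frac{1}{r_vs_vt_v}$, the vertex sum over the topograph of $q$ (equivalently $\frac{8}{\omega_D|D|^{3/2}}\sum_{\gamma\in PSL_2(\ZZ)}g(\gamma z_q)$). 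So the theorem is equivalent to
\begin{equation*}
\sum_v\frac{1}{r_vs_vt_v}=\frac{4\pi}{|D|^{3/2}}\quad\text{for every positive-definite }q\text{ of discriminant }D,
\end{equation*}
or, what amounts to the same, to $\sum_{\gamma\in PSL_2(\ZZ)}g(\gamma z)=\frac{3\pi}{2}$ for \emph{every} $z\in\mathbb{H}$.

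\textbf{(3) Telescoping over the tree.} To evaluate $\sum_v\frac{1}{r_vs_vt_v}$ one uses the topograph's climbing relation: across an edge separating the regions labelled $r$ and $s$, the two "opposite" labels $t,t'$ at its endpoints satisfy $t+t'=2(r+s)$. A direct check shows that the edge-function $\psi(r,s,t):=\frac{t-r-s}{D\,rs}$ is odd under $t\mapsto 2(r+s)-t$ and satisfies the vertex relation $\psi(r,s,t)+\psi(s,t,r)+\psi(t,r,s)=\frac1{rst}$ (one needs the identity $(r+s+t)^2-4(rs+st+tr)=D$). Summing $\frac1{rst}$ over the vertices within distance $N$ of a fixed one and using the oddness to cancel all interior half-edges collapses the partial sum to a frontier sum $\sum_{e\text{ frontier}}\psi(e,\cdot)$ over the edges at distance $N$; cutting the topograph along a root edge likewise expresses each half-topograph sum in terms of the root edge plus a frontier term. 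As $N\to\infty$ the frontier becomes dense in the boundary circle $\RR\cup\{\infty\}$ of the Farey tessellation, and the frontier term converges to an integral over that circle which evaluates to $\frac{4\pi}{|D|^{3/2}}$ --- this is where the factor $\pi$ (the total angle of the boundary circle) enters. Since the computation is identical for every $q$ of discriminant $D$, the class-independence of stage (1) follows automatically.

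\textbf{Main obstacle.} The difficulty lies entirely in stage (3): (i) the rational telescoping reduces the vertex sum to a frontier limit that is only conditionally convergent, so one must refine the edge-function --- presumably by adding a homogeneous solution of $\phi(r,s,t)+\phi(s,t,r)+\phi(t,r,s)=0$ of $\arctan$ type --- to obtain a genuinely convergent, and then computable, boundary contribution; and (ii) one must identify that boundary contribution (a limiting Riemann sum over the tree's boundary circle) as exactly $\frac{4\pi}{|D|^{3/2}}$, keeping all constants straight. Conceptually, what makes the telescoping close is that the half-topograph sum depends only on the root edge and on $D$, so that gluing the two halves leaves a quantity depending on $D$ alone. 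If carrying the telescoping to the end proves stubborn, one may instead recognise $\sum_v\frac1{r_vs_vt_v}$ as a special value of the Epstein zeta function / non-holomorphic Eisenstein series attached to $q$ and invoke Dirichlet's class number formula $h(D)=\frac{\omega_D\sqrt{|D|}}{\pi}L(1,\chi_D)$ --- essentially the classical route, via the lattice-sum keystone of Hurwitz's original argument (cf. Zagier) --- but the telescoping is the argument in the spirit of this paper, and it additionally yields the stronger fact that $\sum_{\gamma\in PSL_2(\ZZ)}g(\gamma z)$ is constant on all of $\mathbb{H}$.
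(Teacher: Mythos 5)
Your proposal is correct in strategy and follows essentially the same route as the paper: reduce \eqref{hurwitz} to the class-independent statement that every topograph of discriminant $D<0$ satisfies $\sum_{v}\frac{1}{|r_v s_v t_v|}=\frac{4\pi}{|D|^{3/2}}$ (Theorem~\ref{th_three}, which the paper obtains from Theorem~\ref{thm_5} together with the remark following its proof), and then prove that identity by telescoping over the tree. Your bookkeeping in stages (1)--(2) checks out — the factor $\frac{3}{\omega_D}$ (three cyclic orders per superbase divided by the order-$\omega_D$ stabiliser in $PSL_2(\ZZ)$) reproduces the paper's $D=-4$ example — and your edge function $\psi(r,s,t)=\frac{t-r-s}{D\,rs}$ is exactly the paper's vertex identity \eqref{eq:vertex-telescope}, since $r+s-t$ is the edge label $f$. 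The one step you leave open, convergence and evaluation of the frontier term, is the only real content missing, and the paper closes it geometrically rather than by massaging a conditionally convergent boundary integral: after an $SL(2,\RR)$ change of variables reducing $q$ to $\frac{\sqrt{-D}}{2}\|v\|^2$, one has $\frac{e}{rt}=\frac{2\,x\cdot y}{\|x\|^2\|y\|^2}=\sin 2\alpha$, where $\alpha$ is the angle between the lattice vectors of the corresponding basis; the crown angles partition a fixed total angle ($\pi$ for the full topograph) and individually tend to $0$, so the crown sum converges to twice that total angle with error $O\bigl(\sum_i \alpha_i^3\bigr)$. This is precisely your proposed ``$\arcsin$-type homogeneous correction'' (the paper's Lemma~\ref{lemma_main}) made concrete, and it also fixes the branch choices that you rightly note are delicate in the purely algebraic version.
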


Hurwitz's proof amounts to computing the area of a certain domain in two different ways, see Section~\ref{sec_duality} for details.
 
\begin{example}
For the quadratic form $q(v)=\|v\|^2=m^2+n^2$, the discriminant $D=-4$, and $ h(D)=1$. Hurwitz's arguments \cite[p. 20]{hurwitz1905darstellung} lead to the formula 
\begin{equation}
\label{sum_1}
\sum_{\substack{a,b,c,d\in \ZZ_{\geq 0},\\  ad-bc=1}} \frac{1}{(a^2+b^2)(c^2+d^2)((a+c)^2+(b+d)^2)} = \frac{\pi}{4},
\end{equation}

By the change of variables \eqref{equiv2}, it follows that
$$\frac{1}{A(A+B+C)C} = \frac{1}{(a^2+b^2)((a+c)^2+(b+d)^2)(c^2+d^2)},$$
 thus, putting it all together and multiplying by the constant $3$ coming from the three cyclic orders of the denominators of the terms, we rewrite \eqref{sum_1} as

$$h(-4)=1= \frac{2}{12\pi}\cdot 2^3\cdot \frac{3\pi}{4}= \frac{\omega_{-4}}{12\pi}\cdot|-4|^{3/2}\sum\limits_{\substack{A>0 \\ B^2-4AC=-4}}\frac{1}{A(A+B+C)C}.$$
\end{example}
Hurwitz's result has been largely unnoticed for nearly a century, with the only exceptions: Dickson's {\it History of number theory} \cite[p.167] {dickson1952history} in 1952 and Sczech's work \cite{sczech1992eisenstein} on Eisenstein cocycles for $GL_2\QQ$ in 1992.

It was revived in 2019, in the Duke-Imamo\=glu-T\'oth paper \cite{duke2021class}, and a formula for the indefinite case ($D>0$), similar to \eqref{hurwitz},  was established:

\begin{theorem}[\cite{duke2021class}, Theorem 3, p. 3997] For $D>0$, a fundamental discriminant,
\begin{equation}\label{duke}
h(D)\log \varepsilon_D = D^{1/2}\sum\limits_{\substack{[A,B,C]\text{  reduced}\\ B^2-4AC=D}}\frac{1}{B}+D^{3/2}\sum_{\substack{A,C,A+B+C>0\\ B^2-4AC=D}}\frac{1}{3(B+2A)B(B+2C)}.
\end{equation}
\end{theorem}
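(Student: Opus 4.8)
The plan is to read both sums in~\eqref{duke} on Conway's topographs of discriminant $D$, to collapse the second (two-dimensional) sum onto the \emph{rivers} of those topographs by a telescoping identity attached to the vertices, and then to recognise the resulting one-dimensional sum, taken together with the first sum, as the regulator $\log\varepsilon_D$. Fix a fundamental discriminant $D>1$; it is not a perfect square, so each of the $h(D)$ proper $SL(2,\ZZ)$-classes of primitive forms of discriminant $D$ has a topograph carrying a bi-infinite river, periodic under the fundamental automorph of the form, with the positive values on one side and the negative values on the other; the positive side is a disjoint union of trees, one hanging off each river vertex. At a vertex with adjacent regions $X,Y,Z$ the three incident edges carry the labels $\ell_1=X-Y-Z$, $\ell_2=Y-X-Z$, $\ell_3=Z-X-Y$ (twice the pertinent bilinear-form values), subject to $D=(X+Y+Z)^2-4(XY+YZ+ZX)$, and the product $\ell_1\ell_2\ell_3$ depends only on the unordered triple of regions. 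If such a vertex lies in the positive forest and $[A,B,C]$ is a form attached to it --- so $\{A,C,A+B+C\}=\{X,Y,Z\}$, all positive --- then $\ell_1\ell_2\ell_3=B(B+2A)(B+2C)$, while ``$[A,B,C]$ reduced'' is, under the standard dictionary, exactly the condition that the edge between the regions $A$ and $C$ lies on a river (such an edge separates values of opposite sign). Summing over all $h(D)$ topographs at once one obtains
\[ D^{3/2}\sum_{\substack{A,\,C,\,A+B+C>0\\ B^2-4AC=D}}\frac1{3(B+2A)B(B+2C)}=\sum_{v\ \text{in a positive forest}}\frac{c\,D^{3/2}}{\ell_1(v)\ell_2(v)\ell_3(v)}, \]
with $c\in\QQ$ a fixed constant recording how many forms sit at a single vertex, and likewise the first sum of~\eqref{duke} equals $c'D^{1/2}\sum_e\ell_e^{-1}$ over the river edges. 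Region values grow at least geometrically away from the rivers, so all series in sight converge absolutely, which licenses the rearrangements below.

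The key device will be an \emph{edge potential} $\Phi$: an explicit function of a directed edge of the topograph --- depending only on the two regions bordering it, its direction, and $D$, and antisymmetric under reversal --- such that at every vertex $v$ whose three regions are positive,
\[ \Phi(\vec e_1)+\Phi(\vec e_2)+\Phi(\vec e_3)=\frac{c\,D^{3/2}}{\ell_1(v)\ell_2(v)\ell_3(v)}, \]
where $\vec e_1,\vec e_2,\vec e_3$ are the three edges at $v$ oriented away from $v$. I would produce $\Phi$ by a partial-fraction decomposition in the two bordering regions, expecting an expression built from $\arctanh\frac{X-Y}{\sqrt D}$-type terms; the point that a function of only two regions can have the prescribed three-region divergence at every positive vertex is a genuine constraint, and it is the relation $D=(X+Y+Z)^2-4(XY+YZ+ZX)$ that makes this possible. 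Granting $\Phi$, the discrete divergence theorem on each positive-side tree --- i.e.\ telescoping over the trivalent tree, with interior edge contributions cancelling in antisymmetric pairs --- shows that summing the right-hand side over the positive-forest vertices lying over one period of a given river reduces to boundary terms, namely $\sum_e\Phi(\vec e)$ over the finitely many edges joining that river to its positive-side trees, each suitably oriented. Thus $\sum_v\frac{c\,D^{3/2}}{\ell_1\ell_2\ell_3}=\sum_e\Phi(\vec e)$ over the river-to-tree edges of one period.

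It then remains to show that, over one period of one river,
\[ \sum_e\Big(\frac{c'\,D^{1/2}}{\ell_e}+\Phi(\vec e)\Big)=\log\varepsilon_D, \]
the first sum over the period's river edges, the second over its river-to-tree edges, paired one of each per river vertex. This is a one-dimensional telescoping along the river: one writes the summand as $\Psi(v')-\Psi(v)$ for consecutive river vertices $v,v'$, with $\Psi$ a potential on the river vertices, and uses that one full period of the river realises the fundamental automorph of the form, whose hyperbolic translation length --- read from the purely periodic continued fraction of the associated quadratic irrationality, and equal to $\arcsinh(u\sqrt D/2)=\log\varepsilon_D$ for the fundamental solution of $t^2-Du^2=4$, with the analogous $\arctanh$/$\arcsinh$ expressions in the other norm cases --- is precisely $\log\varepsilon_D$; hence $\Psi$ gains exactly $\log\varepsilon_D$ over one period. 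Summing this identity over the $h(D)$ classes converts its left side into $h(D)\log\varepsilon_D$ and its right side into the two topograph sums above, i.e.\ into the right-hand side of~\eqref{duke}.

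I expect the two genuinely non-routine steps to be: (a) the construction and verification of the edge potential $\Phi$ --- the surprising feature that a function of merely two regions has the right divergence at every positive vertex both forces its shape and must be confirmed by a finite algebraic identity in $X,Y,Z$; and (b) the river computation bridging the telescoping to the arithmetic of the order of discriminant $D$, that is, identifying one period of the river with $\log\varepsilon_D$, where the $\arctanh$/$\arcsinh$ identities for the regulator and the continued-fraction structure enter. The absolute convergence, the precise reduced-form/river-edge dictionary, the rational constants $c,c'$, and the summation over the $h(D)$ classes are, by contrast, routine bookkeeping.
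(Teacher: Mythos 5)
First, a point of comparison: the paper does not prove this statement at all --- \eqref{duke} is quoted verbatim from Duke--Imamo\=glu--T\'oth, and the closest the paper comes is Section~\ref{sec:non-square-proof}, where it runs the logic in the \emph{opposite} direction: it takes O'Sullivan's topograph reformulation (Theorem~\ref{mt}, equivalent to \eqref{duke}) as an input, combines it with the telescoping formula \eqref{top4}, and deduces Corollary~\ref{cor}. Your proposal, if completed, would therefore be a genuinely new derivation rather than a reconstruction of the paper's argument. Your architecture is sound and matches the paper's machinery where they overlap: the edge potential you postulate does exist and is exactly $\Phi(\vec e)=\tfrac{1}{\sqrt D}\arctanh\tfrac{\sqrt D}{e}-\tfrac1e$ (antisymmetric under reversal since both terms are odd in $e$), whose vertex divergence $\sum_i\Phi(\vec e_i)=D/(efg)$ follows from \eqref{eq_reci} and \eqref{eq_arctanh}; it is real off the river because $e^2=D+4rt>D$ when both flanking regions are positive; and the crown terms are $O(|e|^{-3})$, so Conway's climbing lemma kills them. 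One caveat on your dictionary: in \eqref{duke} ``reduced'' is Zagier-reduction ($A,C>0$, $B>A+C$), so the forms of the first sum correspond not to river edges (which are flanked by regions of \emph{opposite} sign) but to the non-river edge at each river vertex, with $B$ the label of that edge --- this is visible in how O'Sullivan's relabelled river vertices contribute $\sqrt D/|g|$. That is fixable bookkeeping, but as stated your identification would pair the wrong edges with the wrong labels.

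The genuine gap is the last step. After telescoping, everything reduces to the claim that over one river period
\[
\sum_{e}\arctanh\frac{\sqrt D}{|e|}\;=\;2\log\varepsilon_D,
\]
the sum over the non-river edges attached to river vertices, and you only assert this, invoking ``translation length of the fundamental automorph.'' This identity is not a formal consequence of anything set up so far: it is precisely the arithmetic content of the theorem, requiring the continued-fraction description of the river, the identification of one period with the action of the automorph $(t_D+u_D\sqrt D)/2$, and a second, one-dimensional telescoping of $\arctanh(\sqrt D/e)=\tfrac12\log\frac{e+\sqrt D}{e-\sqrt D}$ into a product realizing $\varepsilon_D^2$. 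Neither you nor the paper carries this out (the paper obtains this identity only as Corollary~\ref{cor}, i.e., \emph{from} \eqref{duke}), so as written your argument either leaves its central step unproved or silently imports the theorem it is meant to establish. To close the gap you would need to prove the displayed river identity independently, e.g.\ by writing $\frac{e+\sqrt D}{e-\sqrt D}$ at each river vertex as a ratio of consecutive convergent data and checking that the product over one period telescopes to $\varepsilon_D^2$.
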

Here $\varepsilon_D$, the fundamental unit, is defined as $\varepsilon_D:= (t_D+u_D\sqrt D)/2$, where $(t_D, u_D)$ is the smallest solution to $t^2-Du^2=4$ in positive integers.

In 1997, in his book {\it The Sensual (Quadratic) Form} \cite{conway1997sensual}, John H. Conway introduced topographs, a graphical tool for visualizing binary quadratic forms and their values over the integers.  A topograph provides an intuitive and surprisingly powerful visual framework for understanding, for example, reduction algorithms. 

A topograph for a binary quadratic form $q$ is an infinite trivalent planar tree $\mathcal T$ with labels in the connected components (regions) of $\RR\setminus \mathcal{T}$. Each region corresponds bijectively a pair $(v,-v)$ of primitive lattice vectors in $\ZZ ^2$ (i.e., a point in $ P\mathbb {Q} ^2$), and the label on this region is the value $q(v)=q(-v)$ of the quadratic form $q$. At each vertex, the three adjacent regions correspond to three primitive vectors $v,w,v+w$, forming a basis of $\ZZ^2$. Thus, near each vertex of $\mathcal T$ the labels $r,s,t$ on regions are exactly $q(v),q(w),q(v+w)$ for a certain  basis $(v,w)$ of $\ZZ^2$. Topographs are related to many objects in mathematics \cite{buchstaber2019conway} and can also be used to study variations of Markov triples; see the popular exposition \cite{veselov2023conway}.

Note that for the basis $(v,w)$, $v=\vect{1}{0}, w=\vect{0}{1}$ we obtain $$A(A+B+C)C=q(\vect{1}{0})q(\vect{0}{1})q(\vect{1}{1}).$$ 
Thus, \eqref{hurwitz} may be interpreted as the sum of $$\frac{1}{q(\vect{1}{0})q(\vect{0}{1})q(\vect{1}{1})}$$ over all forms $q$ of discriminant $D$. Instead of applying the $SL(2,\ZZ)$ action to a form, we may apply it to a basis, thereby obtaining a sum of the form $$\frac{1}{q(v)q(w)q(v+w)}$$ over a topograph. 

We will consider summations indexed by vertices $V$ of a half of a topograph, and we sum the reciprocal of the product $|rst|$ of labels on the adjacent regions to $V$ or the product $|egf|$ of labels on the adjacent edges to $V$. 

In 2024, O'Sullivan proposed in \cite{o2024topographs} a unifying approach to these class number series via topographs. In particular,  \eqref{sum_1} was rewritten in the language of topographs as

\begin{theorem}[\cite{o2024topographs}, Theorem 9.1]\label{th_three}
Let $\mathcal{T}$ be any topograph of discriminant $D < 0$. Then
\begin{equation} \label{top}
 |D|^{3/2}\sum_{\substack{
\topo{T}
}} 
\frac{1}{|rst|} = 4\pi, 
\end{equation}
where we sum over all vertices of $\mathcal{T}$, each vertex contributing one term; here $r,s,t$ denote the labels on regions adjacent to a given vertex of $\mathcal{T}$, as explained in Section~\ref{sec_topographs}.
\end{theorem}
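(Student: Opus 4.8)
The plan is to realize the left-hand side as a telescoping sum over the edges of $\mathcal{T}$, using the tree structure crucially. Recall that a topograph is an infinite trivalent planar tree, so every vertex has three incident edges, and cutting along any edge separates the tree into two half-topographs. I would fix a distinguished edge (a ``root'') and then, for each vertex $V$ with adjacent region labels $r,s,t$, try to write $\frac{1}{|rst|}$ as a difference of a quantity attached to the three edges incident to $V$ — say $\Phi(e_1)+\Phi(e_2)+\Phi(e_3)$ with appropriate signs — so that when one sums over all vertices of a half-topograph, every interior edge is counted with opposite signs from its two endpoints and cancels, leaving only a boundary contribution from the root edge. This is precisely the ``telescoping over topographs'' mechanism advertised in the abstract.

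To find the right $\Phi$, I would look for an edge quantity of the form $\Phi(e) = g(\theta)$ where $\theta$ is an angle-like parameter naturally attached to the edge. Concretely, if an edge $e$ separates regions with labels $p$ and $q$ and its two endpoints see the third regions with labels $u$ and $w$ (so $p+q=u+w$ by the parallelogram-type law near the tree, up to signs governed by $D$), then the ``arithmetic'' relevant to a definite form of discriminant $D<0$ is that of reduced lattice vectors; writing $q(v)=|v|^2$ in the model where $q=[1,0,1]$ so $D=-4$, one sees $\frac{1}{|rst|}$ is the cotangent-type excess of the (spherical or hyperbolic) triangle data at $V$. I expect $\Phi(e)$ to be essentially $\arctan$ or $\operatorname{arccot}$ of a ratio built from the labels across $e$, chosen so that the identity $\frac{1}{rst}=\Phi(e_1)+\Phi(e_2)+\Phi(e_3)$ at each vertex is a trigonometric addition formula — this is the local computation that does all the work. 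The constant $4\pi$ should then emerge as the total angular defect: summing the local angle excesses over a half-topograph telescopes to a boundary term at the root, and accounting for both halves (or using that $\mathcal T$ has ``two ends'' worth of boundary circle) gives $2\cdot 2\pi$. The normalization $|D|^{3/2}$ enters because passing from the normalized form $[1,0,1]$ to a general form of discriminant $D$ rescales the associated lattice, hence the quadratic values, by $|D|^{1/2}$ per factor, i.e. $|D|^{3/2}$ overall, while the angular/area quantity is scale-invariant.

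The main obstacle will be \emph{convergence and the precise bookkeeping of the telescoping} on an infinite tree: the sum $\sum 1/|rst|$ converges (as asserted by Theorem~\ref{th_three}, indeed with positive terms when $D<0$), but to telescope legitimately I must show that the edge quantities $\Phi(e)$ tend to $0$ along every infinite path — equivalently, that the region labels grow, which for $D<0$ follows from the fact that along a branch of the topograph the labels strictly increase once one leaves the region of small values. So the rigorous argument runs: (i) establish the local vertex identity $\frac{1}{|rst|}=\Phi(e_1)+\Phi(e_2)+\Phi(e_3)$ by direct trigonometric computation in the $D=-4$ normalization; (ii) rescale by $|D|^{3/2}$ to get the general-discriminant identity; (iii) exhaust $\mathcal T$ by finite subtrees $\mathcal T_n$, apply the local identity to each vertex of $\mathcal T_n$, and collect terms edge-by-edge, so that $|D|^{3/2}\sum_{V\in\mathcal T_n}\frac1{|rst|}$ equals a sum of $\Phi$-values over the boundary edges of $\mathcal T_n$; (iv) let $n\to\infty$ and show the boundary sum converges to $4\pi$ using the decay of $\Phi$ and the combinatorial structure of the tree's boundary (the interior edges having cancelled). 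Steps (i) and (iv) are where the real content lies; step (i) is a finite calculation, and step (iv) requires the growth/positivity facts about topograph labels for $D<0$ recalled above, together with a careful identification of the limiting boundary contribution as a full angular turn $4\pi$.
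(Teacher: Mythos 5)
Your plan is essentially the paper's argument: the exact vertex identity is the rational one $\frac{e}{rt}+\frac{f}{rs}+\frac{g}{st}=\frac{-D}{rst}$ (equation \eqref{eq_stu}, equivalently \eqref{eq_1} with $F(x,y)=2x\cdot y/(\|x\|^2\|y\|^2)$), which telescopes over finite exhaustions because the edge label changes sign under orientation reversal, and the angular interpretation ($\frac{e}{rt}=\sin 2\alpha\approx 2\alpha$ with the angles partitioning the circle of directions) is used only to evaluate the limit of the crown terms, after which \eqref{top} follows by adding the contributions of the two halves of $\mathcal T$ as in the remark following the proof of Theorem~\ref{thm_5}. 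The one correction to your step (i) is that an exactly additive \emph{trigonometric} edge quantity such as $\arcsin(\frac{e}{rt}\cdot\frac{\sqrt{-D}}{2})$ sums to zero at each vertex (Lemma~\ref{lemma_main}) rather than to $\frac{1}{|rst|}$, so the local identity must be the rational one and the $\arcsin$/angle comparison enters only in the boundary estimate.
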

Then, \eqref{hurwitz} follows because the RHS of \eqref{hurwitz} is essentially the sum over all vertices of all topographs of discriminant $D$, and the number of topographs of discriminant $D$ is $h(D)$.
In the same paper, the formula \eqref{duke} for the class number in the indefinite case $D>0$ was rewritten in terms of topographs as 

\begin{theorem}[\cite{o2024topographs}, Theorem 9.2] \label{mt}
Let $\mathcal T$ be any topograph of a non-square discriminant $D>0$. Define $\mathcal T_\star$ to be equal to $\mathcal T$ except that all the river edges are relabeled with $\sqrt{D}$. Then
\[
D^{3/2}
\sum_{
\topoo{T_\star}
}
\frac{1}{|e f g|} = 2\log \varepsilon_D,
\]
where we sum over all vertices of $\mathcal T_\star$ modulo the river period (each vertex contributing one term, see \cite{o2024topographs} for details on the river and its periodicity), and $e,f,g$ are labels on the edges. 
\end{theorem}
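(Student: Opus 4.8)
The plan is to prove the identity by telescoping — the method announced in the introduction — now run \emph{along the river} rather than outward over a single half-topograph, in the same style as a telescoping treatment of the definite case (Theorem~\ref{th_three}). Since $D>0$ is non-square, $\mathcal T$ carries a unique river $\mathcal R$, a bi-infinite path invariant under a translation $\tau$ of some period $k$; because $\mathcal T$ is trivalent, every river vertex $V_j$ ($j\in\ZZ$, $\tau V_j=V_{j+k}$) has exactly two river edges and one edge $f_j$ pointing off $\mathcal R$ into a half-topograph $H_j$ whose regions all carry one fixed sign. A fundamental domain for $\tau$ on the vertex set is the strip $S=\{V_0,\dots,V_{k-1}\}\cup\bigcup_{j=0}^{k-1}V(H_j)$, so the quantity to evaluate is $\sum_{V\in S}\tfrac1{|efg|}$, computed with the labels of $\mathcal T_\star$. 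The first step is to produce an antisymmetric function $\Theta$ on oriented edges of $\mathcal T_\star$ (antisymmetric: $\Theta(\vec e)=-\Theta(-\vec e)$) obeying, at every vertex $V$ with its three edges oriented inward, the local identity $\sum_{\vec e\to V}\Theta(\vec e)=\tfrac1{|efg|_V}$ (with $\tfrac1{|efg|}$ computed in $\mathcal T_\star$, so at a river vertex it equals $\tfrac1{D\,|f_j|}$). Granting $\Theta$, summing this local identity over $S$ telescopes: all edges with both endpoints in $S$ cancel — in particular every edge of every $H_j$, since each $H_j\subset S$ — and only the two river edges on $\partial S$ survive, giving $\sum_{V\in S}\tfrac1{|efg|}=\Theta(\vec r)-\Theta(\tau\vec r)$, where $\vec r$ is the river edge oriented from $V_{-1}$ to $V_0$ (using $\Theta$ antisymmetric and $\tau$-equivariance of $\mathcal T_\star$).

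Constructing $\Theta$ and checking the local identity is the technical heart. At a vertex with region labels $p,q,t$ the three honest edge labels are $|t-p-q|$, $|q-p-t|$, $|p-q-t|$, with product $8\,|b(p+b)(q+b)|$ where $b=\tfrac12(t-p-q)$ and $b^{2}-pq=D/4$; thus at a non-river vertex the local identity is an explicit algebraic functional equation in the region values, to be verified using only the arithmetic-progression rule $u+u'=2(p+q)$ across each edge together with $B^{2}-4AC=D$. I expect $\Theta$ to be, up to the overall factor $D^{-3/2}$, the increment across the oriented edge of the logarithm of a cross-ratio-type coordinate — geometrically, $\Theta(\vec e)$ records a signed (normalized) hyperbolic displacement along the axis $\mathcal R$ determined by the geodesic of $\vec e$, where the ideal endpoints of $\mathcal R$ are the real roots $\tfrac{-B\pm\sqrt D}{2A}$ of the form. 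The relabeling of river edges by $\sqrt D$ is precisely what makes the local identity persist at the river vertices (with the honest river labels the strip sum still telescopes, but to a less tidy boundary term). One also checks that $\sum_{V\in S}\tfrac1{|efg|}$ converges absolutely — immediate, since edge labels grow without bound away from $\mathcal R$ — which licenses the rearrangement.

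Finally, to evaluate $\Theta(\vec r)-\Theta(\tau\vec r)$: the translation $\tau$ is induced by the fundamental orientation-preserving automorph of the form, a hyperbolic element of $\mathrm{SL}_2(\ZZ)$ with axis $\mathcal R$ and eigenvalues $\varepsilon_D^{\pm1}$ (for $\varepsilon_D$ as defined via $t_D^{2}-Du_D^{2}=4$), hence multiplier $\varepsilon_D^{2}$; on the logarithmic coordinate along $\mathcal R$ it acts by translation through the geodesic length $2\log\varepsilon_D$. Since $\Theta$ is $D^{-3/2}$ times (an affine function of) that coordinate, $\Theta(\vec r)-\Theta(\tau\vec r)=2\log\varepsilon_D/D^{3/2}$, and multiplying by $D^{3/2}$ gives the theorem; independence of the choice of $\mathcal T$ and of fundamental domain is built in. The main obstacle is the middle step: identifying $\Theta$ and verifying the local identity at river and non-river vertices simultaneously, tracking the $\sqrt D$-relabeling so that the transverse cancellation inside each $H_j$ and the longitudinal telescoping along $\mathcal R$ fit together; after that the remaining ingredient is the classical identification of the relevant geodesic length with $2\log\varepsilon_D$. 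As a consistency check one may also deduce Theorem~\ref{mt} from the Duke--Imamo\=glu--T\'oth formula~\eqref{duke} via the dictionary that turns~\eqref{hurwitz} into~\eqref{top} (vertices of topographs $\leftrightarrow$ forms of discriminant $D$, river edges $\leftrightarrow$ reduced forms), though that route does not account for the $\sqrt D$-relabeling, which the telescoping argument explains.
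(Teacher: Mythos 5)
First, a point of reference: the paper does \emph{not} prove this statement. Theorem~\ref{mt} is quoted from O'Sullivan \cite{o2024topographs} (his Theorem 9.2) and is used as an \emph{input} in Section~\ref{sec:non-square-proof}, where, combined with \eqref{top4}, it yields Corollary~\ref{cor}. So your proposal must stand on its own, and its central step fails as stated. The local identity you postulate forces your hand: by \eqref{eq_reci}, the natural antisymmetric edge function with $\sum_{\vec e\to V}\Theta(\vec e)=1/(efg)$ is $\Theta(\vec e)=1/(D\cdot e(\vec e))$, which on the relabeled river edges equals $1/D^{3/2}$. Since $\tau$ preserves all labels of $\mathcal T_\star$, your boundary term $\Theta(\vec r)-\Theta(\tau\vec r)$ is then \emph{identically zero}, while the left-hand side is a sum of positive terms. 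The contradiction pinpoints the error: the claim that ``every edge of every $H_j$ cancels since $H_j\subset S$'' treats an infinite half-topograph as a finite telescoping. Pairwise cancellation of interior edges is only valid after truncating at depth $N$, and the leftover crown sums $\sum_{E\in C_N}1/(D\,e(E))$ do \emph{not} tend to zero --- they tend to $\arctanh(\sqrt D/f_j)/D^{3/2}$ (this is the content of \eqref{top4} and the estimates of Sections~\ref{sec_ideas} and~\ref{sec_proofs}); equivalently, $\sum_{\vec e}|\Theta(\vec e)|$ diverges over $H_j$, so absolute convergence of $\sum_V 1/|efg|$ does not license your rearrangement.

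The salvageable version of your plan is: apply \eqref{top4} to each $H_j$ rooted at the transverse edge $f_j$, add the river-vertex contributions $1/(D f_j)$, and obtain $D^{3/2}\sum_{S}1/|efg|=\sum_{j}\arctanh(\sqrt D/|f_j|)$ over one river period. But then the entire content of the theorem is concentrated in the remaining identity $\sum_j\arctanh(\sqrt D/|f_j|)=2\log\varepsilon_D$ --- precisely Corollary~\ref{cor}, which the paper \emph{derives from} Theorem~\ref{mt} rather than proving independently. Your appeal to the fundamental automorph acting by geodesic translation length $2\log\varepsilon_D$ is the right heuristic for this finite sum, but it is asserted, not proved: one must track the branch of the logarithmic coordinate $\tfrac12\log\bigl|\tfrac{e+\sqrt D}{e-\sqrt D}\bigr|$ across the river, where $|e|<\sqrt D$ and Lemma~\ref{lemma_main} does not apply, and show that the period accumulated over one river translation is the regulator. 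That bookkeeping is the missing heart of the argument. Your fallback --- deducing the statement from \eqref{duke} via the topograph dictionary --- is legitimate but amounts to a citation of Duke--Imamo\=glu--T\'oth and O'Sullivan, not an independent proof.
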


\subsection{String-theory formulas} There was an independent parallel story. Beyond number theory, similar lattice sums arise in the study of modular graph functions in string theory. In 2008, in an unpublished note \cite{zagiernotes}, Zagier considers

\[
D_{1,1,1}(z)=\sideset{}{'}\sum_{\omega_1+\omega_2+\omega_3=0}\frac{\operatorname{Im}(z)^3}{|\omega_1\omega_2\omega_3|^{2}},\quad \omega_1,\omega_2,\omega_3\in \ZZ z+\ZZ,
\]
where $\sideset{}{'}\sum$ denotes the summation over the terms with nonzero denominators, and proves

\begin{equation}
\label{eq_2}
D_{1,1,1}(z) = 2E_3(z)+\pi^3\zeta(3),
\end{equation}

Zagier's proof involves analytic manipulations, partial telescoping, and reduction of the question to sums of $\frac{1}{(z+n)(z+m)}$, then to sums involving the real part of $\frac{1+q}{1-q}$ for $q=e^{2\pi iz}$. Finally, Zagier proves \eqref{eq_2} up to a holomorphic $SL(2,\ZZ)$-invariant function that is small at infinity (and therefore identically zero).   


Let us specialize $z=i$, so $\ZZ z+\ZZ$ becomes $\ZZ^2$. Then, one can rewrite $D_{1,1,1}(i)$ via a sum over $\omega_1,\omega_2$ spanning parallelograms of area one and hence, up to standard transformations, \eqref{eq_2} is equivalent to \eqref{sum_1}; see \cite{kalinin2024mordell} for details.


In 2017, in \cite{d2017modular}, modular graph functions (MGF) were defined; they appear in the low-energy expansion of genus-one Type II superstring amplitudes.  In perturbative type-II superstring theory, the genus-one four-graviton amplitude can be written as an integral over the torus moduli space of
products of Green functions. The integrals turn into lattice sums that are modular invariant by construction.  At weight two, one obtains classical Eisenstein series $E_n(z)$. At weight three, the unique connected vacuum diagram is a ``sunset'' graph with two vertices joined by three
propagators, and the corresponding lattice sum is precisely
\(D_{1,1,1}(z)\), as defined above.  In other words, Zagier's arithmetic
identity \eqref{eq_2}
provides the first closed formula for a nontrivial modular graph function that appears in the low-energy expansion of superstring
amplitudes.  

A general modular graph function is constructed from a graph $\Gamma$. Assign to each edge $e\in \Gamma$ the variable $\frac{y}{|\omega_e|^2}$. Consider the sum of products $\prod\limits_{e\in G} \frac{y}{|\omega_e|^2}$ over all tuples of $\omega_e\in \ZZ z+\ZZ$ such that, at each vertex, the sum of incoming $\omega_e$  equals zero.

If \(\Gamma\) consists of two vertices and $k$ edges between them, then

\begin{equation}\label{eq_string}
D_{\Gamma}(z)=
\sideset{}{'}\sum_{\substack{\omega_{1},\dots ,\omega_{k}\in\Lambda\\ \omega_{1}+\dots+\omega_{k}=0}}
\frac{y^{\,k}}{\prod_{i=1}^{k}|\omega_{i}|^{2}},\qquad
\Lambda=\ZZ+\ZZ z,\; z=x+iy,
\end{equation}
So we obtain the definition of $D_{1,1,1}$ when $\Gamma$ is a graph with two vertices and three edges ($k=3$, the ``sunset'' graph). 
The study of modular graph functions has revealed a rich network of differential and algebraic relations between them, zeta-values, Eisenstein functions, etc., resulting in hundreds of articles. 
Our telescopic viewpoint furnishes a parallel, purely arithmetical derivation of some of those relations.
For example, using \eqref{eq_stu3} one can prove relation
$$D_{2,2,1}=\frac{2}{5}E_5+\frac{\zeta(5)}{30}$$
for the graph $\Gamma$ corresponding to $D_{1,1,1}$ but with two of its edges subdivided  by an additional vertex. However, for $D_{1,1,1,1}$ (two-vertex graph with four edges between them) at present, the telescoping method does not yield a proof of the well-known identity
$$D_{1,1,1,1}=24D_{2,1,1}+3E_2^2-18E_4.$$

\subsection{New results} In \cite{kalinin2024mordell} Zagier's formula \eqref{eq_2} was obtained by a telescopic method, along with \eqref{hurwitz}. It then became clear that the telescopic method permits the derivation of the above Hurwitz-type formulas. This telescopic approach allows explicit evaluations of lattice sums by reducing global series to boundary contributions in topographs.


\begin{definition}\label{def:rooted_subtree}
Let $\mathcal T $ be a topograph, and let \(E\) be an oriented edge of $\mathcal T$ labeled \(e_{0}\), with adjacent regions \(r_{0},t_{0}\) (hence $E$ corresponds to the quadratic form $r_0m^2+e_0mn+t_0n^2$).  
Removing \(E\) separates $\mathcal T $ into two  infinite components; 
let $\mathcal T'$ denote the component containing the target $V_0$ of $E$.
The edge \(E\) is called the {\it root} of the subtree $\mathcal T'$. 
We call $\mathcal T'$ {\it the upper half of $\mathcal T$} relative to the root $E$.

\end{definition}

Call an upper half $\mathcal T'$ {\it admissible} if it is an oriented tree, i.e. all edges are oriented ``from the root to infinity'' and $rstefg(V)\ne 0$ for each $V\in \mathcal T'$, i.e. the denominators in the sums below are never zero. The main theorems of this article are as follows.

\begin{theorem} 
\label{thm_5}
Let $\mathcal{T}$ be any topograph of discriminant $D < 0$. Let $\mathcal T'$ be the upper half of $\mathcal T$ with respect to a root $E$. Suppose $\mathcal T'$ is admissible. Then, for a suitable branch of $\arcsin$ and for the principle branch of $\arctan$,
\begin{equation} \label{top1}
 \sum_{\substack{
\topo{T'}}} 
\frac{1}{|rst|} = \frac{1}{D}\left(\frac{e_0}{r_0t_0}-\frac{2\arcsin(\frac{e_0}{r_0t_0}\cdot\frac{\sqrt{-D}}{2})}{\sqrt{-D}}\right),
\end{equation}

\begin{equation} \label{top2}
  \sum_{\substack{
\topoo{\mathcal{T}'}
}} 
\frac{1}{|efg|} =  \frac{1}{D}\left(\frac{\arctan(\frac{\sqrt{-D}}{e_0})}{\sqrt{-D}}-\frac{1}{e_0}\right),  
\end{equation}
where the summation is over all vertices of $\mathcal{T}'$, with each contributing one term. 
\end{theorem}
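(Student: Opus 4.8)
The plan is to evaluate both sums by telescoping a \emph{potential} defined on the oriented edges of $\mathcal T'$. Replacing $q$ by $-q$ if necessary (which changes neither $|rst|$ nor $|efg|$) we may assume $q$ is positive definite, so all region labels are positive and $D<0$. To an oriented edge $E$ with label $e$ and adjacent regions $r,t$ attach the angle $\vartheta(E)$ with $\sin\vartheta(E)=\dfrac{e}{2\sqrt{rt}}$ and $\cos\vartheta(E)=\dfrac{\sqrt{-D}}{2\sqrt{rt}}$ (consistent because $e^{2}-4rt=D$), and put $\tau(E)=\tfrac\pi2-\vartheta(E)$. A short manipulation using $\tfrac{e}{rt}=\tfrac{2\sin 2\vartheta(E)}{\sqrt{-D}}$ shows that
\[
\Phi(E):=\frac{2}{(-D)^{3/2}}\bigl(\pi-2\vartheta(E)-\sin 2\vartheta(E)\bigr)
\]
is exactly the right-hand side of \eqref{top1} with $(e_0,r_0,t_0)$ replaced by $(e,r,t)$, once the arcsine is read on the branch sending its argument to $\pi-2\vartheta(E)$ — this is the ``suitable branch''; similarly
\[
\Psi(E):=\frac{\tan\tau(E)-\tau(E)}{(-D)^{3/2}}
\]
is the right-hand side of \eqref{top2} with the branch of $\arctan$ that returns $\tau(E)$. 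Both $\Phi$ and $\Psi$ are odd under reversing the orientation of $E$.

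The heart of the argument is the local identity at a vertex $V\in\mathcal T'$ with region labels $r,s,t$, incoming edge $E_0$ and outgoing edges $E_1,E_2$. The arithmetic of the topograph gives the edge labels $e=s-r-t$ on $E_0$ (sides $r,t$), $f=r+s-t$ on $E_1$ (sides $r,s$), $g=s+t-r$ on $E_2$ (sides $s,t$), all of discriminant $D$. From $f+g=2s$ one obtains the angle-addition relation $\vartheta(E_1)+\vartheta(E_2)-\vartheta(E_0)=\tfrac\pi2$, equivalently $\tau(E_0)=\tau(E_1)+\tau(E_2)$ (geometrically: realizing the three regions of $V$ on a circle via $v\mapsto L(v)/|L(v)|$, where $|L(v)|^{2}=q(v)$, the arcs add up). Substituting this relation, the claim $\Phi(E_0)-\Phi(E_1)-\Phi(E_2)=\tfrac1{rst}$ splits into the trivial cancellation $(\pi-2\vartheta(E_0))-(\pi-2\vartheta(E_1))-(\pi-2\vartheta(E_2))=0$ and, after the double-angle formula together with $4rt\cos^{2}\vartheta(E_0)=4rs\cos^{2}\vartheta(E_1)=4st\cos^{2}\vartheta(E_2)=-D$, the trigonometric identity $\sin2\vartheta(E_1)+\sin2\vartheta(E_2)-\sin2\vartheta(E_0)=4\cos\vartheta(E_0)\cos\vartheta(E_1)\cos\vartheta(E_2)$ (valid whenever $\vartheta(E_1)+\vartheta(E_2)-\vartheta(E_0)=\tfrac\pi2$), since this last quantity equals $(-D)^{3/2}/(2rst)$. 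In the same way $\Psi(E_0)-\Psi(E_1)-\Psi(E_2)=\tfrac1{|efg|}$ reduces, via $\tau(E_0)=\tau(E_1)+\tau(E_2)$, to the classical identity $\tan\tau(E_0)-\tan\tau(E_1)-\tan\tau(E_2)=\tan\tau(E_0)\tan\tau(E_1)\tan\tau(E_2)$.

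Granting the local identity, sum it over the depth-$n$ truncation $\mathcal T'_{\le n}$. Because $\mathcal T'$ is admissible its edges carry a globally consistent orientation, so each edge of $\mathcal T'_{\le n}$ other than the root occurs once with sign $+$ (as an incoming edge) and once with sign $-$ (as an outgoing edge), leaving
\[
\sum_{V\in\mathcal T'_{\le n}}\frac1{|rst|}=\Phi(E)-\sum_{E'}\Phi(E'),
\]
the last sum running over the edges crossing the boundary of $\mathcal T'_{\le n}$. As $n\to\infty$ the left side converges (it is dominated by the absolutely convergent sum of Theorem~\ref{th_three}), while the boundary term tends to $0$: along any ray in a topograph the labels grow at least geometrically, hence $\vartheta(E')\to\tfrac\pi2$ and $\pi-2\vartheta(E')-\sin2\vartheta(E')=O\bigl((\tfrac\pi2-\vartheta(E'))^{3}\bigr)=O\bigl((rt)^{-3/2}\bigr)$, while the number of boundary edges grows only geometrically. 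Thus $\sum_{V\in\mathcal T'}\tfrac1{|rst|}=\Phi(E)$, which is \eqref{top1}; the identical argument with $\Psi$ (using $\tan\tau-\tau=O(\tau^{3})$) gives \eqref{top2}.

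The step I expect to be the main obstacle is the branch bookkeeping: the ``suitable branch'' is not the principal value but the genuine angle $\pi-2\vartheta$ (resp.\ $\tau$), and one must check that, read this way, $\Phi$ and $\Psi$ are the correct edge potentials \emph{uniformly along} $\mathcal T'$. This is exactly where admissibility enters — in particular that $\mathcal T'$ does not contain the vertex or edge at which $q$ attains its minimum, so that the incoming edge of every $V\in\mathcal T'$ has positive label, $\vartheta(E)\in(0,\tfrac\pi2)$ throughout, and the branch can be chosen coherently (this also makes $|efg|=efg$ at each vertex). The remaining ingredients — the arithmetic relation $f+g=2s$, the two trigonometric identities, and the tail estimate in the telescoping — are routine.
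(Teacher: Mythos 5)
Your proof is correct in substance and lands on the same telescoping philosophy, but it is organized differently from the paper. The paper proves \eqref{top1} by an $SL(2,\RR)$ reduction to $\mu\|v\|^2$ and telescoping $F(x,y)=2x\cdot y/(\|x\|^2\|y\|^2)$, handling the crown by the geometric fact that the angles of the Farey parallelograms partition a fixed sector; and it proves \eqref{top2} by an entirely separate area argument (triangles cut out by tangent lines to the unit circle tiling the region between the circle and two tangents). You instead package both statements uniformly: an exact edge potential ($\Phi$ or $\Psi$, odd under orientation reversal), an exact local identity at each vertex coming from the angle-addition $\tau(E_0)=\tau(E_1)+\tau(E_2)$ together with a classical trigonometric identity, and a boundary estimate. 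This is precisely the ``direct'' route the paper sketches after its proof of \eqref{top1} and then declines to pursue, citing two difficulties: the branch of $\arcsin$/$\arctan$ and the crown tail estimate. Your treatment of the branch issue is the right one and arguably cleaner than the paper's --- defining the potential through the genuine angle $2\tau$ (resp.\ $\tau$) makes the ``suitable branch'' coherent along all of $\mathcal T'$ and makes the local identity exact rather than approximate. What your approach buys is a single argument covering both \eqref{top1} and \eqref{top2} (and, mutatis mutandis, the $D>0$ and $D=0$ cases); what the paper's geometric proofs buy is that the tail estimate and the branch choice become visually obvious.

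One justification in your tail estimate is wrong as stated: it is not true that ``along any ray in a topograph the labels grow at least geometrically.'' Along the two boundary rays of $\mathcal T'$ that hug the fixed regions $r_0$ and $t_0$, one of the two adjacent labels is constant and the other grows only quadratically in the depth (e.g.\ for $q=m^2+n^2$ the regions $(1,0)$ and $(n,1)$ give $rt=n^2+1$). The estimate survives, but you should derive it from ingredients you already have: the exact additivity gives $\sum_{E'\in C_N}\tau(E')=\tau(E)$, so
\[
\sum_{E'\in C_N}\bigl(2\tau(E')-\sin 2\tau(E')\bigr)\;\le\;\tfrac{4}{3}\,\tau(E)\,\bigl(\max_{C_N}2\tau\bigr)^{2},
\]
and $\max_{C_N}\tau\to 0$ because the away-from-root edge labels satisfy $f=e+2r$, $g=e+2t$, hence increase by at least $2\min q>0$ at every step, forcing $\sin\tau=\sqrt{-D}/(2\sqrt{rt})=\sqrt{-D}/\sqrt{e^2-D}\to 0$ uniformly over $C_N$. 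The same recursion $f=e+2r>0$, $g=e+2t>0$ is what substantiates your claim that admissibility (i.e.\ $e_0\ge 0$ and the absence of the well from $\mathcal T'$) keeps $\vartheta\in(0,\pi/2)$ and $efg>0$ throughout, so that step should be stated via this induction rather than asserted. With these two points repaired the argument is complete.
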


\begin{theorem} 
\label{thm_6}
Let $\mathcal{T}$ be any topograph of discriminant $D>0$. Let $\mathcal T'$ be the upper half of $\mathcal T$ with respect to a root $E$. Suppose $\mathcal T'$ is admissible. Then
\begin{equation} \label{top3}
 \sum_{\substack{
\topo{T'}
}} 
\frac{1}{|rst|} = \frac{1}{D}\left(\frac{e_0}{r_0t_0}-\frac{2\arcsinh(\frac{e_0}{r_0t_0}\cdot\frac{\sqrt{D}}{2})}{\sqrt{D}}\right),
\end{equation}
\begin{equation} \label{top4}
 \sum_{\substack{
\topoo{\mathcal{T}'}
}} 
\frac{1}{|efg|} =  \frac{1}{D}\left(\frac{\operatorname{arctanh}(\frac{\sqrt{D}}{e_0})}{\sqrt{D}}-\frac{1}{e_0}\right),  
\end{equation}
where the summation is over all vertices of $\mathcal{T}'$, with each contributing one term.
\end{theorem}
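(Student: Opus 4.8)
The plan is to prove \eqref{top3} and \eqref{top4} together, by telescoping over the rooted tree $\mathcal T'$ against a ``potential'' defined on its oriented edges. I first record the local picture at a vertex $V\in\mathcal T'$ whose incoming edge carries data $(e_0;r_0,t_0)$, so that $e_0^2-4r_0t_0=D$: by Conway's arithmetic rule the third region at $V$ is $s=r_0+t_0+e_0$, and the two outgoing edges of $V$ carry $(e_1;r_0,s)$ and $(e_2;t_0,s)$ with $e_1=e_0+2r_0$, $e_2=e_0+2t_0$. A direct check gives $e_1^2-4r_0s=e_2^2-4t_0s=D$ together with the relations $e_1+e_2=2s$, $e_1e_2=2se_0-D$, $e_0s-e_1t_0-e_2r_0=D$, and $e_1e_2-e_0e_1-e_0e_2=-D$. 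Next I reduce to the case of positive labels. For the series in \eqref{top3}--\eqref{top4} even to converge --- which is implicit in the statement, since the river carries infinitely many vertices each contributing a term bounded away from $0$ --- the tree $\mathcal T'$ must lie strictly to one side of the river; this forces $r_0t_0>0$, whence all region labels of $\mathcal T'$ share a sign, and (after replacing $q$ by $-q$, which preserves $D$, negates every label, and --- by the oddness of $\arcsinh,\arctanh$ --- negates both sides of \eqref{top3} and \eqref{top4}) I may assume $r_0,t_0>0$; then necessarily $e_0>0$ (otherwise $\mathcal T'$ would meet the river), so $e_0>\sqrt D$, and the recursion above propagates positivity to $s,e_1,e_2$ and hence to every label of $\mathcal T'$.

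The heart of the argument is a pair of local telescoping identities. With
\[
G(e):=\frac1D\Bigl(\frac{\arctanh(\sqrt D/e)}{\sqrt D}-\frac1e\Bigr),\qquad
F(e;r,t):=\frac1D\Bigl(\frac{e}{rt}-\frac2{\sqrt D}\arcsinh\frac{e\sqrt D}{2rt}\Bigr),
\]
--- so that $G(e_0)$ and $F(e_0;r_0,t_0)$ are exactly the right-hand sides of \eqref{top4} and \eqref{top3} --- I claim that at every vertex $V$,
\[
\frac1{e_0e_1e_2}=G(e_0)-G(e_1)-G(e_2),\qquad
\frac1{r_0t_0s}=F(e_0;r_0,t_0)-F(e_1;r_0,s)-F(e_2;t_0,s).
\]
Split each potential into a rational and a transcendental part. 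The rational parts reproduce the left-hand sides outright: $-\frac1D(\frac1{e_0}-\frac1{e_1}-\frac1{e_2})=\frac{-(e_1e_2-e_0e_1-e_0e_2)}{De_0e_1e_2}=\frac1{e_0e_1e_2}$ and $\frac1D(\frac{e_0}{r_0t_0}-\frac{e_1}{r_0s}-\frac{e_2}{t_0s})=\frac{e_0s-e_1t_0-e_2r_0}{Dr_0t_0s}=\frac1{r_0t_0s}$, by the relations above. The transcendental parts vanish: from $\arctanh x=\tfrac12\log\tfrac{1+x}{1-x}$, $e_1+e_2=2s$ and $e_1e_2=2se_0-D$ one gets
\[
\frac{(e_1+\sqrt D)(e_2+\sqrt D)}{(e_1-\sqrt D)(e_2-\sqrt D)}
=\frac{(2se_0-D)+2s\sqrt D+D}{(2se_0-D)-2s\sqrt D+D}
=\frac{e_0+\sqrt D}{e_0-\sqrt D},
\]
i.e. $\arctanh\frac{\sqrt D}{e_0}=\arctanh\frac{\sqrt D}{e_1}+\arctanh\frac{\sqrt D}{e_2}$, which kills the transcendental part of $G$; and since $\arcsinh\frac{e\sqrt D}{2rt}=2\arctanh\frac{\sqrt D}{e}$ whenever $r,t>0$ and $e>\sqrt D$ (an elementary simplification, valid in our normalization), the transcendental part of $F$ reduces to the very same identity. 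This shared $\arctanh$-addition formula is what makes \eqref{top3} and \eqref{top4} two faces of one statement.

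It remains to telescope. Summing either local identity over all $V\in\mathcal T'$ with $\operatorname{dist}(V)\le N$, every non-root edge appears once with sign $+$ (as the incoming edge of its target) and once with sign $-$ (as an outgoing edge of its source), so the sum collapses to the root term minus the sum of the potential over the $\sim2^{N}$ depth-$N$ frontier edges. Letting $N\to\infty$, the left side converges to the claimed series, and what must be shown is that the frontier contribution tends to $0$ --- and this is exactly the purpose of the transcendental correction. The rational part $G_{\mathrm{rat}}(e)=O(e^{-1})$ and the leading term of $F$, of size $O((rt)^{-1/2})$, are far too large to beat $\sim2^{N}$ frontier edges; but after the leading cancellation one has $G(e)=O(e^{-3})$ and $F(e;r,t)=O((rt)^{-3/2})$, and comparing with the tail $\sum_{\operatorname{dist}(V)=N}\frac1{|rst|}$ of the (separately established) convergent series forces the depth-$N$ frontier sums to $0$. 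This yields $\sum\frac1{|efg|}=G(e_0)$ and $\sum\frac1{|rst|}=F(e_0;r_0,t_0)$, as claimed. The real difficulty, I expect, is concentrated in (i) \emph{finding} the potentials $F,G$ --- equivalently, realizing that the transcendental correction needed to make the naive rational telescoping actually converge is an $\arctanh$ --- after which everything above is mechanical, and (ii) the sign/branch bookkeeping needed to keep $\arcsinh,\arctanh$ well defined and consistent throughout $\mathcal T'$, together with the convergence estimate in the last step. The positive-discriminant case is formally identical to Theorem~\ref{thm_5}: one simply replaces $\arcsin,\arctan$ (built from $\sqrt{-D}$) by $\arcsinh,\arctanh$ (built from $\sqrt D$), reflecting $D\mapsto-D$ and $\arctan(iy)=i\arctanh y$, $\arcsin(iy)=i\arcsinh y$.
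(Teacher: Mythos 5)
Your argument is essentially the paper's own proof of Theorem~\ref{thm_6}: the paper sums the rational vertex identities \eqref{eq_stu} and \eqref{eq_reci} over a finite subtree so that interior contributions cancel, leaving a root term plus a crown term, and then evaluates the crown via the addition formulas \eqref{eq_arcsinh}, \eqref{eq_arctanh} of Lemma~\ref{lemma_main} together with $x=\arcsinh x+O(x^3)$. Your potentials $F,G$ merely bundle the rational and transcendental pieces into a single exact vertex identity, and your verification that the transcendental parts cancel is precisely Lemma~\ref{lemma_main2}; the observation $\arcsinh\bigl(\tfrac{e\sqrt D}{2rt}\bigr)=2\arctanh\bigl(\tfrac{\sqrt D}{e}\bigr)$, which makes \eqref{top3} and \eqref{top4} two faces of one identity, is a nice touch not made explicit in the paper. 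One caveat on your last step: the proposed comparison of the frontier sum with the tail of $\sum 1/|rst|$ does not work as stated, because $F(e;r,t)\asymp (rt)^{-3/2}$ while $1/(rst)<\tfrac14(rt)^{-3/2}$ (since $s=r+t+e>4\sqrt{rt}$), and the ratio $F(e;r,t)\,rst\asymp s/\sqrt{rt}$ is unbounded along the boundary of a fixed region, where $r$ is constant and $t\to\infty$; so the convergent series does not dominate the frontier terms. The correct (and easy) route, implicit in Section~\ref{sec_ideas}, is to use the exact telescoping $\sum_{C_N}\arctanh(\sqrt D/e)=\arctanh(\sqrt D/e_0)$ to bound $\sum_{C_N}1/e$ uniformly in $N$, and then to bound the cubic remainders by $\max_{C_N}(D/e^2)$ times that bounded sum, noting that $\min_{C_N}|e|\to\infty$ because every region of $\mathcal T'$ beyond the root exceeds $\sqrt D$ and edge labels strictly increase along the tree; the paper itself leaves this decay estimate at the same level of informality, so this is a refinement rather than a divergence from its method.
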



We can also pass to the limit when $D\to 0$. We get

\begin{theorem} 
\label{thm_7}
Let $\mathcal{T}$ be any topograph of discriminant $D=0$. Let $\mathcal T'$ be the upper half of $\mathcal T$ with respect to a root $E$. Suppose $\mathcal T'$ is admissible. Then
\begin{equation} \label{top5}
 \sum_{\substack{
\topo{T'}
}} 
\frac{1}{|rst|} =\frac{1}{24}\left(\frac{e_0}{r_0t_0}\right)^3,
\end{equation}
\begin{equation} \label{top6}
 \sum_{\substack{
\topoo{\mathcal{T}'}
}} 
\frac{1}{|efg|} =  \frac{1}{3}\left(\frac{1}{e_0}\right)^3.  
\end{equation}
where the summation is over all vertices of $\mathcal{T}'$, with each contributing one term.
\end{theorem}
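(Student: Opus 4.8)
The plan is to obtain Theorem~\ref{thm_7} by letting $D\to 0$ in Theorems~\ref{thm_5} and~\ref{thm_6}. Two points need attention. The easy one: the closed-form right-hand sides of \eqref{top1}--\eqref{top4} extend analytically across $D=0$, with limiting values exactly the right-hand sides of \eqref{top5}--\eqref{top6}. The substantive one: the left-hand sides, which are honest sums over the vertices of a $D=0$ topograph, are continuous in $D$ at $D=0$, so that the identities pass to the limit.

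For the first point it suffices to expand $\arcsin u=u+\tfrac{u^{3}}{6}+O(u^{5})$, $\arcsinh u=u-\tfrac{u^{3}}{6}+O(u^{5})$ and $\arctan u=u-\tfrac{u^{3}}{3}+O(u^{5})$, $\arctanh u=u+\tfrac{u^{3}}{3}+O(u^{5})$. In the region-sum formulas \eqref{top1} and \eqref{top3} one takes $u=\tfrac{e_0}{r_0t_0}\cdot\tfrac{\sqrt{|D|}}{2}$: the linear term cancels the stand-alone term $\tfrac{e_0}{r_0t_0}$, the cubic term produces $\tfrac{D}{24}\bigl(\tfrac{e_0}{r_0t_0}\bigr)^{3}+O(D^{2})$ — the sign of the series expansion compensating the sign of $D$, so the $D<0$ and $D>0$ sides give the same thing — and division by $D$ leaves $\tfrac1{24}\bigl(\tfrac{e_0}{r_0t_0}\bigr)^{3}+O(D)$, which is \eqref{top5}. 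In the edge-sum formulas \eqref{top2} and \eqref{top4} one takes $u=\tfrac{\sqrt{|D|}}{e_0}$ and finds in the same way $\tfrac1{D}\bigl(\tfrac{D}{3e_0^{3}}+O(D^{2})\bigr)=\tfrac1{3}\bigl(\tfrac1{e_0}\bigr)^{3}+O(D)$, which is \eqref{top6}.

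For the second point, fix the admissible upper half $\mathcal T'$ with root data $(e_0,r_0,t_0)$, so $D=e_0^{2}-4r_0t_0$. The topograph recursion — the new region at a vertex is $s=r+t+e$ and the two new edge labels are $e+2r$ and $e+2t$ (with the signs fixed by the orientation of Definition~\ref{def:rooted_subtree}) — writes every label of $\mathcal T'$ as an integer polynomial in $(e_0,r_0,t_0)$. Hence each partial sum of $\sum_{V}\frac1{|rst|}$ (resp. $\sum_V\frac1{|efg|}$) over the vertices of bounded depth is a rational function of $(e_0,r_0,t_0)$, jointly continuous wherever no label vanishes, and since admissibility is an open condition near the given point the only thing left to check is that the tails $\sum_{V:\,\operatorname{depth}(V)>N}\frac1{|rst|}$ are bounded, uniformly for $D$ in a neighbourhood of $0$, by a quantity tending to $0$. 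For this one uses that an admissible upper half cannot touch the zero region, so its region labels grow without bound along every path flowing away from the root — for $D=0$ the product of the three labels at a vertex of depth $N$ is $\gg N^{2}$ — which, exactly as in the convergence analysis behind Theorems~\ref{thm_5} and~\ref{thm_6}, makes the tails decay uniformly. Granting this, $\sum_V\frac1{|rst|}$ and $\sum_V\frac1{|efg|}$ are continuous at $D=0$, and Theorem~\ref{thm_7} follows from Theorems~\ref{thm_5} and~\ref{thm_6}.

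The step I expect to demand real work is this last uniform tail bound; the rest is the bookkeeping displayed above. An alternative route that avoids interchanging limit and sum is to re-run the telescoping proof of Theorems~\ref{thm_5} and~\ref{thm_6} with $D=0$ directly: the edge potential used there extends analytically to $D=0$, where it becomes the rational function $\tfrac1{24}(e_0/r_0t_0)^{3}$ (resp. $\tfrac13(1/e_0)^{3}$) read off the right-hand sides above; the local telescoping identity at a vertex degenerates to a polynomial identity checked by hand; and the same growth estimate kills the boundary contribution. Either way one arrives at Theorem~\ref{thm_7}.
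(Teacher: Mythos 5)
Your proposal follows the paper's own route: the paper proves Theorem~\ref{thm_7} by exactly the two methods you describe --- passing to the limit $D\to 0$ in Theorems~\ref{thm_5} and~\ref{thm_6}, and, alternatively, telescoping the $D=0$ degeneration of the vertex identities directly (e.g.\ \eqref{eq_stu5} at $D=0$ reduces to $1/e^3+1/f^3+1/g^3=3/(efg)$, which telescopes to $\tfrac13 e_0^{-3}$, and \eqref{top5} is likewise reduced to the classical sum $\sum_{(m,n)=1}1/(m^2n^2(m+n)^2)=1/24$). Your Taylor-expansion bookkeeping for the right-hand sides is correct, and the uniform tail estimate you honestly flag as the remaining work is left equally implicit in the paper's one-line treatment, so there is no substantive divergence between the two arguments.
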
 
 
\begin{corollary}\label{cor}
As a direct corollary of Theorem 9.12 in \cite{o2024topographs} and formula \eqref{top4}, for non-square $D>0$ we obtain
$$2\log \varepsilon_D = \sum \arctanh\frac{\sqrt{D}}{|e|},$$
where the sum ranges only over edges adjacent to the vertices on the river of the topograph, but not lying in the river (modulo river period); hence the sum is finite. 
\end{corollary}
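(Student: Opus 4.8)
The plan is to feed the local evaluation \eqref{top4} into the global identity of Theorem~\ref{mt}, using the standard description of a topograph $\mathcal T$ of non-square discriminant $D>0$ (see \cite{conway1997sensual,o2024topographs}): the river $\mathcal R$ is a bi-infinite simple path, periodic with some finite period, and every vertex $V\in\mathcal R$ has exactly one incident edge $E_V$ \emph{not} lying on $\mathcal R$. Deleting the river edges from $\mathcal T$ leaves the river vertices together with the disjoint family $\{\mathcal T'_V\}_{V\in\mathcal R}$, where, in the notation of Definition~\ref{def:rooted_subtree}, $\mathcal T'_V$ is the upper half of $\mathcal T$ rooted at $E_V$, oriented away from the river. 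An off-river edge joining two river vertices would create a cycle in $\mathcal T$ together with the river path between them, which is impossible; hence $V\mapsto E_V$ is a bijection between river vertices and off-river edges adjacent to $\mathcal R$, compatible with the period, and the vertex set of $\mathcal{T}_\star$ decomposes (modulo period) as $\mathcal R\,\sqcup\,\bigsqcup_{V\in\mathcal R}\mathcal T'_V$.

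First I would check that each $\mathcal T'_V$ is admissible, so that \eqref{top4} applies to it. The orientation condition is automatic for a tree rooted at an edge. For the non-vanishing condition: since $D$ is not a square, no region of $\mathcal T$ has value $0$; since $\mathcal T'_V$ lies entirely on one side of $\mathcal R$, all its region labels share a common nonzero sign, so the regions $r_0,t_0$ flanking $E_V$ satisfy $r_0t_0>0$. Every edge $e$ of $\mathcal T'_V$ is off-river, hence flanked by two regions of equal sign, so, using that the quadratic form attached to that edge has discriminant $D$, its label satisfies $e^2 = D + 4(\text{product of those two region values}) > D>0$, whence $e\neq0$; the same identity at the root gives $e_0^2 = D + 4r_0t_0 > D$, so $0<\sqrt D/|e_0|<1$ and $\arctanh(\sqrt D/|e_0|)$ is real. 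One also has $\arctanh x>x$ for $x=\sqrt D/|e_0|\in(0,1)$, so the right-hand side of \eqref{top4} is positive, consistent with its left-hand side.

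Next I would split the sum of Theorem~\ref{mt} for $\mathcal{T}_\star$, over one river period, along the decomposition above; all summands $1/|efg|$ are positive, so the rearrangement is legitimate. A river vertex $V$ has its two river edges relabeled $\sqrt D$ in $\mathcal{T}_\star$ and keeps the label $e_0(V)$ on $E_V$, so it contributes $\dfrac{1}{\sqrt D\cdot\sqrt D\cdot|e_0(V)|}=\dfrac{1}{D\,|e_0(V)|}$. Inside each $\mathcal T'_V$ all edges are off-river, hence untouched by the passage to $\mathcal{T}_\star$, so by \eqref{top4} the corresponding branch contributes $\dfrac1D\!\left(\dfrac{\arctanh(\sqrt D/|e_0(V)|)}{\sqrt D}-\dfrac{1}{|e_0(V)|}\right)$. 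Adding the two contributions of $V$, the terms $\pm\dfrac{1}{D\,|e_0(V)|}$ cancel and leave $\dfrac{1}{D^{3/2}}\arctanh\dfrac{\sqrt D}{|e_0(V)|}$. Multiplying by the prefactor $D^{3/2}$ of Theorem~\ref{mt} and summing over river vertices modulo the period yields $2\log\varepsilon_D=\sum_V\arctanh\dfrac{\sqrt D}{|e_0(V)|}$, which is the asserted identity once the bijection $V\mapsto E_V$ is used to re-index the sum by off-river edges adjacent to $\mathcal R$.

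The arithmetic is a one-line cancellation; the only step demanding care is the topographic bookkeeping of the first paragraph — that for non-square $D>0$ the river is a simple periodic bi-infinite path, that each river vertex has a unique off-river edge, and that the off-river part of $\mathcal T$ is exactly the disjoint union of the (admissible) upper halves $\mathcal T'_V$. These facts are part of Conway's theory of topographs and are already implicit in Theorem~\ref{mt}, so I would simply invoke them.
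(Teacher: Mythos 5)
Your proof is correct and follows essentially the same route as the paper: split O'Sullivan's sum $D^{3/2}\sum 1/|efg|$ over one river period into the river-vertex terms and the branch contributions rooted at the off-river edges, evaluate each branch by \eqref{top4}, and cancel the $1/|e_0|$ terms. Your write-up also supplies details the paper leaves implicit (admissibility of each branch via $e^2=D+4rt>D$, and the bijection between river vertices and off-river edges), and it keeps the intermediate constants straight where the paper's displayed chain of equalities drops a factor of $\sqrt D$ in the terms written as $1/e$.
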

 
\subsection{Telescoping-over-topograph method of proof}\label{sec_ideas}

We discuss the conceptual picture behind the proof of Theorem~\ref{thm_5}, \eqref{top1}, other statements are similar.

\begin{definition}
Let $\mathcal T''$ be a connected subgraph of $\mathcal T'$, containing $V_0$, such that $V_0$ has degree $2$ in $\mathcal T''$ and all other vertices have degree three or one. See Figure~\ref{fig_2}. An edge $E'\in \mathcal T''$ is called a {\it leaf} if it is not a root and is adjacent to a degree-one vertex of $\mathcal T''$. The set of leaves of $\mathcal T''$  is called the {\it crown} of $\mathcal T''$ (see  Figure~\ref{fig_2}). 
\end{definition}

Fix an exhaustion $\{\mathcal T ''_N\}_{N\ge1}$ of $\mathcal T'$ by finite connected subgraphs, as in Definition~2. Each $\mathcal T ''_N$ contains $V_0$, with crown $C_N$ consisting of the leaves of $\mathcal  T''_N$, and every edge of $C_N$ lies at graph distance $N$ from the root.
At each interior vertex with region labels $(r,s,t)$ and adjacent edge labels $(e,f,g)$ (with appropriate orientations), we use the basic telescoping identity
\begin{equation}\label{eq:vertex-telescope}
\frac{1}{rst}=\frac{1}{D}\!\left(\frac{e}{rt}-\frac{f}{rs}-\frac{g}{st}\right),
\end{equation}
so that, upon summing \eqref{eq:vertex-telescope} over all vertices of $\mathcal T''_N$, the interior contributions in RHS cancel pairwise, leaving (i) a root term and (ii) crown terms supported on $C_N$.

%
%

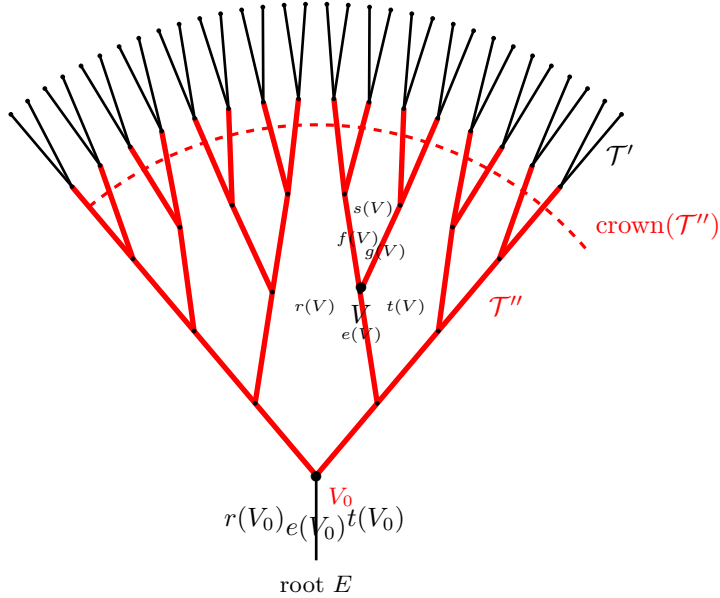
\begin{figure}[h]

\begin{tikzpicture}
\def\maxlevel{5}      
\def\rstep{1.25}
\def\highlight{4}     
\def\angLeft{130}
\def\angRight{50}

\coordinate (v0-0) at (0,0);

\foreach \L in {1,...,\maxlevel}{
  \pgfmathsetmacro{\r}{\L*\rstep}
  \pgfmathtruncatemacro{\NmOne}{int( (2^\L) - 1 )}
  \foreach \i in {0,...,\NmOne}{
    \pgfmathsetmacro{\theta}{\angLeft - (\i*(\angLeft-\angRight)/\NmOne)}
    \coordinate (v\L-\i) at (\theta:\r);
  }
}

\foreach \L in {0,...,\numexpr\maxlevel-1\relax}{
  \pgfmathtruncatemacro{\NmOne}{int( (2^\L) - 1 )}
  \foreach \i in {0,...,\NmOne}{
    \pgfmathtruncatemacro{\Lp}{\L+1}
    \pgfmathtruncatemacro{\cA}{2*\i}
    \pgfmathtruncatemacro{\cB}{2*\i+1}
    \edef\parent{v\L-\i}
    \ifnum\L=0 \edef\parent{v0-0}\fi
    \ifnum\Lp>\highlight
      \draw[line width=1.0pt] (\parent) -- (v\Lp-\cA);
      \draw[line width=1.0pt] (\parent) -- (v\Lp-\cB);
    \else
      \draw[red,line width=2.0pt] (\parent) -- (v\Lp-\cA);
      \draw[red,line width=2.0pt] (\parent) -- (v\Lp-\cB);
    \fi
  }
}

\fill (v0-0) circle (0.9pt);
\foreach \L in {1,...,\maxlevel}{
  \pgfmathtruncatemacro{\NmOne}{int( (2^\L) - 1 )}
  \foreach \i in {0,...,\NmOne}{
    \fill (v\L-\i) circle (0.9pt);
  }
}

\draw[black,line width=1pt] (0,-0.9*\rstep) -- (v0-0);
\node[below left=0.4cm] at (v0-0) {$r(V_0)$};
\node[below right=0.4cm] at (v0-0) {$t(V_0)$};
\node[below =0.4cm] at (v0-0) {$e(V_0)$};

\node[below =1.2cm] at (v0-0) {\small root $E$};
\node[red,below right=0.05cm] at (v0-0) {\small $V_0$};
\node[below=0cm] at (0,0.18) {$\bullet$};

\pgfmathsetmacro{\rc}{(4 - 0.28)*\rstep}
\draw[red,very thick,dashed]
  (\angLeft:\rc) arc[start angle=\angLeft,end angle=\angRight-10,radius=\rc];
\node[red,above] at (4.5,3) {crown($\mathcal T^{\prime\prime}$)};
\node[above] at (4,4) {$\mathcal T^{\prime}$};
\node[red, above] at (2.5,2) {$\mathcal T^{\prime\prime}$};

\coordinate (v1) at (0.59,2.68);
\node[below=0.3cm] at (v1) {$V$};
\node[below=0cm] at (v1) {$\bullet$};

\node[below left=0.3cm] at (v1) {{\tiny $r(V)$}};
\node[above=0.4cm] at (0.75,2.9) {{\tiny $s(V)$}};
\node[below right =0.3cm] at (v1) {{\tiny $t(V)$}};

\node[below left=0.3cm] at (1.2,2.3) {{\tiny $e(V)$}};
\node[above=0.4cm] at (0.55,2.5) {{\tiny $f(V)$}};
\node[below right =0.3cm] at (0.3,3.4) {{\tiny $g(V)$}};

\end{tikzpicture}

\caption{Illustration of $\mathcal T'$, an upper half of $\mathcal T$ relative to a root $E$, a subtree $\mathcal T_4''$ (in red) with a crown $C_4$ consisting of the leaves that intersect the dashed line. For each vertex $V$, we may consider the labels $r(V), s(V), t(V)$ on the adjacent regions and the labels $e(V), f(V), g(V)$ on the adjacent edges.
}\label{fig_2}

\end{figure}

\begin{remark}
Using relations in Section~\ref{sec_trigo}, the identity \eqref{eq:vertex-telescope} can be rewritten as  $fg=e\cdot f+e\cdot g-D$. For $D=-4$, after scaling all variables by $\sqrt{-D}/2$), this is equivalent to the famous identity  \cite{zagier2000quelques} for cotangent function: $$\cot(X)\cot(Y)=\cot(X)\cot(X+Y)+\cot(Y)\cot(X+Y)+1.$$
\end{remark}

To find the limit of the sum of the crown terms when $N\to \infty$, for $D<0$ we use approximation $x\sim \arcsin x+ O(x^3)$ and the identity (see Lemma~\ref{lemma_main})

\[\arcsin(\tfrac{e}{rt}\cdot\tfrac{\sqrt{-D}}{2}) + \arcsin(\tfrac{f}{rs}\cdot\tfrac{\sqrt{-D}}{2})+\arcsin(\tfrac{g}{st}\cdot\tfrac{\sqrt{-D}}{2})=0,\]

Thus, if the following smallness conditions on the crown are satisfied

$$\max_{E\in C_N}\Big|\tfrac{e}{rt}(E)\Big|\longrightarrow 0, \sum_{E\in C_N}\Big|\tfrac{e}{rt}(E)\Big|^{3}\longrightarrow 0 \text{ as } N\to\infty,$$

then $$\sum\limits_{E\in C_N}\tfrac{e}{rt}(E)= \frac{2}{\sqrt{-D}}\arcsin\left(\frac{e_0}{r_0t_0}\cdot\frac{\sqrt{-D}}{2}\right)+ O\left(\sum_{E\in C_N}\Big|\tfrac{e}{rt}\Big|^{3}\right).$$ 

Putting all together and letting $N\to\infty$ proves \eqref{top1}. 

\subsection{Comparison with existing results}
Modular graph functions such as \eqref{eq_string} are defined for positive definite quadratic forms. In \cite{o2024topographs}, the sums of terms of the form $\frac{1}{|rst|}$ are considered for $D<0$ and the sums with summands like $\frac{1}{|efg|}$ for $D>0$. Our approach allows quadratic forms with any discriminant, but one shall restrict summation to admissible halves of a topograph, as the sum over all the vertices of a topograph diverges for $D\geq 0$. We also discuss a geometric interpretation of terms $\frac{1}{rst},\frac{1}{efg}$ that simplify proofs.

One can consider sums with terms of the form $\frac{1}{|r^ns^mt^k|}$ with natural numbers $n,m,k$; in \cite{d2017modular} relations between such sums for various $m,n,k$ were deduced (similar relations are deduced in \cite{duke2021class},\cite{o2024topographs}). Accordingly, one can write the same relations using identities from Section~\ref{sec_trigo}.



To the best of the author's knowledge, the formula for $\log \varepsilon_D$ as in Corollary~\ref{cor} has not previously appeared in the literature. 

It would be desirable to interpret our formulas as evaluations of Eisenstein/Sczech cocycles on modular symbols, in the spirit of \cite{sczech1992eisenstein}.

\subsection{Plan of the paper}

Section~\ref{sec_topographs}  defines topographs and establishes several useful identities for the labels near the vertices of topographs.  Section~\ref{sec_hurwitz} illustrates the main idea by deriving \eqref{sum_1} via telescoping. In Section~\ref{sec_mt} we demonstrate how to obtain Mordell-Tornheim series using our approach. In Section~\ref{sec_proof1} we prove our Theorem~\ref{thm_5} for the case $D<0$ and terms $\frac{1}{rst}$. Section~\ref{sec_duality}  highlights the geometric meaning of the summands and illustrates the duality between formulas involving region labels and those involving edge labels. Here we prove Theorem~\ref{thm_5} for the terms $\frac{1}{efg}$.

 In Section~\ref{sec_hata}, we show that Hata's series for Euler's constant is a particular case of our construction. In Section~\ref{sec_proofs} we prove Theorems~\ref{thm_6},~\ref{thm_7}. Section~\ref{sec:non-square-proof} contains computations for Corollary~\ref{cor} (indefinite case, non-square discriminant), Section~\ref{subsec:square-D} provides similar formulas for the square discriminant. Section~\ref{sec_proofs} presents proofs in the general case.


%

%
%
%
%
%
%
%
%
%
%
%
%
%
%
%
%
%

\section{Topographs}
\label{sec_topographs}
A topograph is a planar, connected, trivalent tree $\mathcal T$ with labels on vertices, edges, and regions (the connected components of $\RR^2\setminus \mathcal{T}$). These labels encode the values of a binary quadratic form $q$ and, at the same time, provide a framework for navigating the set of forms $SL(2,\ZZ)$-equivalent to $q$. Topographs were introduced by J.-H. Conway in \cite{conway1997sensual} in 1997. They provide a powerful geometric tool for visualizing the behavior of binary quadratic forms and their equivalence classes.

Let us start with the graph structure.

\begin{definition}
A superbase is a triple $v_1,v_2,v_3\in\ZZ^2$ such that $v_1+v_2+v_3=0 $ and $\{v_1,v_2\}$ forms a basis of $\ZZ^2$. We consider superbases up to sign, i.e. triples $\{v_1,v_2,v_3\}$ and $\{-v_1,-v_2,-v_3\}$ are identified. Consider a graph $\mathcal T$  whose vertices represent all superbases. Let edges connect vertices of the form $$\{v_1,v_2,-v_1-v_2\}\text{ and }\{v_1,-v_2,-v_1+v_2\}.$$ Thus, each edge corresponds to the four bases $\{\pm v_1,\pm v_2\}$ of $\ZZ^2$. Each vertex $\{v_1,v_2,v_3\}$ has degree three, with incident edges corresponding to $$\{\pm v_1,\pm v_2\},\{\pm v_1,\pm v_3\},\{\pm v_2,\pm v_3\}.$$

\end{definition}

The graph $\mathcal T$ is connected and can be embedded in $\mathbb{R}^2$ without self-intersections; the resulting planar graph is called a topograph. Each region in the complement to the graph corresponds to a primitive vector $\pm v$, see Figure~\ref{fig_1} for a local picture near a vertex and an edge. 
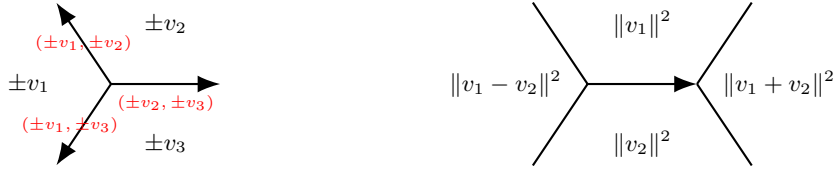
\begin{figure}[h]
\begin{minipage}[b]{0.45\linewidth}
 \begin{tikzpicture}[scale=0.36, every node/.style={font=\Large}, 
    arrow/.style={-{Latex[length=3mm]}, thick},
    redlabel/.style={red, font=\Large}]

  \begin{scope}[shift={(21,5)}]
    \coordinate (B1) at (5,5);
    \coordinate (B2) at (3,8);
    \coordinate (B3) at (3,2);
    \coordinate (B4) at (9,5);

    \draw[arrow] (B1) -- (B2);
    \draw[arrow] (B1) -- (B3);
    \draw[arrow] (B1) -- (B4);

    \node[redlabel] at (7,4.3) {\tiny $(\pm v_2, \pm v_3)$};
    \node[redlabel] at (3.5,3.5) {\tiny$(\pm v_1, \pm v_3)$};
    \node[redlabel] at (4,6.5) {\tiny $(\pm v_1,\pm v_2)$};

    \node at (2,5) {\small $\pm v_1$};
    \node at (7,7.2) {\small $\pm v_2$};
    \node at (7,2.8) {\small $\pm v_3$};
  \end{scope}
\end{tikzpicture}
\end{minipage}
\begin{minipage}[b]{0.45\linewidth}
 \begin{tikzpicture}[scale=0.36, every node/.style={font=\Large}, 
    arrow/.style={-{Latex[length=3mm]}, thick},
    redlabel/.style={red, font=\Large}]

  \begin{scope}[shift={(7,5)}]
    \coordinate (A1) at (3,8);
    \coordinate (A2) at (3,2);
    \coordinate (A3) at (5,5);
    \coordinate (A4) at (9,5);
    \coordinate (A5) at (11,8);
    \coordinate (A6) at (11,2);

    \draw[thick] (A1) -- (A3) -- (A2);
    \draw[arrow] (A3) -- (A4);
    \draw[thick] (A4) -- (A5);
    \draw[thick] (A4) -- (A6);

    \node at (2,5) {\small $\|v_1-v_2\|^2$};
    \node at (7,7.2) {\small$\|v_1\|^2$};
    \node at (7,2.8) {\small$\|v_2\|^2$};
    \node at (12,5) {\small$\|v_1+v_2\|^2$};
  \end{scope}
  \end{tikzpicture}  
 \end{minipage}
  \caption{Left: local picture near a vertex corresponding to a superbase $\{v_1,v_2,v_3\}$. Right:  local picture near an edge, with labels associated to the quadratic form $q(v)=\|v\|^2$.}
  \label{fig_1}
\end{figure}

We label the regions of the topograph with numbers. 
\begin{example}
Label the region corresponding to $\pm v$  by $\|v\|^2 = m^2+n^2$ where $v=(m,n)$. Recall the {\it parallelogram law}: for any $v_1,v_2\in\RR^2$,

$$\|v_1-v_2\|^2+\|v_1+v_2\|^2 = 2(\|v_1\|^2+\|v_2\|^2).$$
\end{example}

Given a binary quadratic form $q$, assign to the region corresponding to $\pm v$ the label $q(v)$.
 
Note that $q(v_1-v_2) + q(v_1+v_2) = 2 \left(q(v_1)+q(v_2)\right)$. Using this identity, one can recover all the values of $q$ on $\ZZ^2$ once  the values of $q$ on a superbase are known.

Conversely, if we label all the regions of the topograph so that, near every edge as in Figure~\ref{apfig}, a), labels satisfy 
\begin{equation}
r+u=2(s+t),
\end{equation}
then these labels determine a unique quadratic form $q$. Indeed, if $$q(m,n)=am^2+bmn+cn^2,$$ then the coefficients $a,b,c$ can be recovered from the identities $$q(\vect{1}{0})=a,q(\vect{0}{1})=c,q(\vect{-1}{-1})=a+b+c.$$

Note that $r, s+t,u$ form an arithmetic progression with difference $g: =s+t-r$, so we label by $g$ the oriented edge pointing from $r$ to $u$, see Figure~\ref{apfig}, a,b). This label changes sign if the orientation of the edge is reversed.

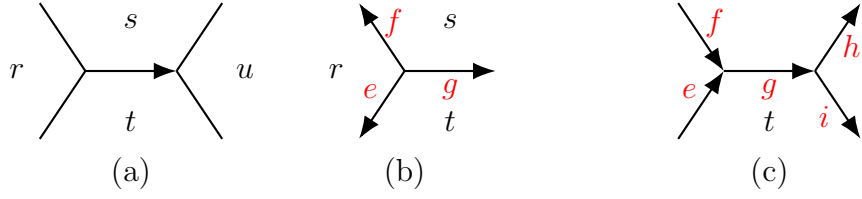
\begin{figure}[h]
\centering
\begin{tikzpicture}[scale=0.3, every node/.style={font=\Large}, 
    arrow/.style={-{Latex[length=3mm]}, thick},
    redlabel/.style={red, font=\Large}]

  \begin{scope}[shift={(7,5)}]
    \coordinate (A1) at (3,8);
    \coordinate (A2) at (3,2);
    \coordinate (A3) at (5,5);
    \coordinate (A4) at (9,5);
    \coordinate (A5) at (11,8);
    \coordinate (A6) at (11,2);

    \draw[thick] (A1) -- (A3) -- (A2);
    \draw[arrow] (A3) -- (A4);
    \draw[thick] (A4) -- (A5);
    \draw[thick] (A4) -- (A6);

    \node at (2,5) {$r$};
    \node at (7,7.2) {$s$};
    \node at (7,2.8) {$t$};
    \node at (12,5) {$u$};
    \node at (7,0.5) {(a)};
  \end{scope}

  \begin{scope}[shift={(21,5)}]
    \coordinate (B1) at (5,5);
    \coordinate (B2) at (3,8);
    \coordinate (B3) at (3,2);
    \coordinate (B4) at (9,5);

    \draw[arrow] (B1) -- (B2);
    \draw[arrow] (B1) -- (B3);
    \draw[arrow] (B1) -- (B4);

    \node[redlabel] at (7,4.2) {$g$};
    \node[redlabel] at (3.5,4.1) {$e$};
    \node[redlabel] at (4.5,7.1) {$f$};

    \node at (2,5) {$r$};
    \node at (7,7.2) {$s$};
    \node at (7,2.8) {$t$};
    \node at (5,0.5) {(b)};
  \end{scope}

  \begin{scope}[shift={(35,5)}]
    \coordinate (C1) at (3,8);
    \coordinate (C2) at (3,2);
    \coordinate (C3) at (5,5);
    \coordinate (C4) at (9,5);
    \coordinate (C5) at (11,8);
    \coordinate (C6) at (11,2);

    \draw[arrow] (C1) -- (C3);
    \draw[arrow] (C2) -- (C3);
    \draw[arrow] (C3) -- (C4);
    \draw[arrow] (C4) -- (C5);
    \draw[arrow] (C4) -- (C6);

    \node[redlabel] at (7,4.2) {$g$};
    \node[redlabel] at (3.5,4.1) {$e$};
    \node[redlabel] at (4.6,7.1) {$f$};
     \node at (7,2.8) {$t$};
    \node[redlabel] at (10.6,6.1) {$h$};
    \node[redlabel] at (9.4,3) {$i$};

    \node at (7,0.5) {(c)};
  \end{scope}

\end{tikzpicture}
\caption{Topographs locally.}
\label{apfig}
\end{figure}

 Similarly, define $e=r+t-s$ and  $f=r+s-t$ (see Figure~\ref{apfig}, b)). Then $e+g=2t$. 
 
 \begin{definition}
 The number {$D:=-ef-fg-eg$}, where $e,f,g$ are the oriented edge labels near a vertex(as in Figure~\ref{apfig} b)), is called the {\bf discriminant} of the topograph.
 \end{definition}
It is straightforward to verify that $D$ is independent of the choice of vertex.
Indeed, in Figure~\ref{apfig} c) (note the different edge orientations!) we have $g-e=2t=i-g$, and hence $$(g-e)(g-f)=(i-g)(h-g)=g^2-eg-gf+ef=g^2+ih-ig-hg,$$
thus $-eg-gf+ef = ih-ig-hg=D$. 

Given a quadratic form $q$ with an associated bilinear form $B(x,y)$, the label on the edge $\{\pm e_i, \pm e_j\}$ is $\pm 2B(\pm e_i, \pm e_j)$, with the sign determined by the orientation of the edge. For example, for the standard dot product $B(x,y)=x\cdot y$ we obtain

$$s+t-r=\|x\|^2+\|y\|^2-\|x-y\|^2 = 2(x\cdot y).$$

\subsection{Useful identities in a topograph}
\label{sec_trigo}
In this section, we state the identities that form the basis of the telescoping and cancellation argument, since they demonstrate that certain first-order approximations are additive, making the telescoping argument straightforward.  
Let us recall the following identity: 

\begin{equation}
\label{eq_stu} 
 \frac{g}{st}+\frac{f}{rs}+\frac{e}{rt}=\frac{gr+ft+es}{rst}=\frac{g(f+e)+f(e+g)+e(f+g)}{2rst}=\frac{-D}{rst}
\end{equation}
\begin{equation}\label{eq_reci}
\frac{1}{e}+\frac{1}{f}+\frac{1}{g}=\frac{ef+fg+ge}{efg}=\frac{-D}{efg}.
\end{equation}
\begin{equation}
\label{eq_stu2} 
 \frac{s}{fg}+\frac{r}{ef}+\frac{t}{eg}=\frac{gr+ft+es}{efg}=\frac{g(f+e)+f(e+g)+e(f+g)}{2efg}=\frac{-D}{efg}
\end{equation}

\begin{equation}
\label{eq_stu3} 
 \frac{g}{s^2t^2}+\frac{f}{r^2s^2}+\frac{e}{r^2t^2}=-\frac{6}{rst}-\frac{D(r+s+t)}{r^2s^2t^2}
\end{equation}

\begin{equation}
\label{eq_stu4} 
 \frac{s}{f^2g^2}+\frac{r}{e^2f^2}+\frac{t}{e^2g^2}=-\frac{3}{2efg}-\frac{D(e+f+g)}{2e^2f^2g^2}
\end{equation}

\begin{equation}
\label{eq_stu5} 
 \frac{1}{e^3}+\frac{1}{f^3}+\frac{1}{g^3}=-\frac{D^3}{e^3f^3g^3}-\frac{3D(e+f+g)}{e^2f^2g^2}+\frac{e}{efg}
\end{equation}

\begin{equation}
\label{eq_stu6} 
 \frac{s}{f^2g^2}+\frac{r}{e^2f^2}+\frac{t}{e^2g^2}=-\frac{3}{2efg}-\frac{D(e+f+g)}{2e^2f^2g^2}
\end{equation}

Interestingly, the combinatorial structure of a topograph also encodes trigonometric and hyperbolic relations, depending on the sign of the discriminant.

\begin{lemma}
\label{lemma_main} For $D<0$, for the appropriate branches of $\arcsin$ and $\arctan$, we have
\begin{equation}\label{eq_arcsin}
\arcsin(\tfrac{e}{rt}\cdot\tfrac{\sqrt{-D}}{2}) + \arcsin(\tfrac{f}{rs}\cdot\tfrac{\sqrt{-D}}{2})+\arcsin(\tfrac{g}{st}\cdot\tfrac{\sqrt{-D}}{2})=0,\end{equation}
\begin{equation}\label{eq_arctan}\arctan(\tfrac{\sqrt{-D}}{e})+\arctan(\tfrac{\sqrt{-D}}{f})+\arctan(\tfrac{\sqrt{-D}}{g})=0.\end{equation}


For $D>0$ and $|e|,|f|,|g|> \sqrt{D}$ we have
\begin{equation}\label{eq_arcsinh}\arcsinh(\tfrac{e}{rt}\cdot\tfrac{\sqrt{D}}{2}) +\arcsinh(\tfrac{f}{rs}\cdot\tfrac{\sqrt{D}}{2})+\arcsinh(\tfrac{g}{st}\cdot\tfrac{\sqrt{D}}{2})=0\end{equation}
\begin{equation}\label{eq_arctanh}\arctanh(\tfrac{\sqrt{D}}{e})+\arctanh(\tfrac{\sqrt{D}}{f})+\arctanh(\tfrac{\sqrt{D}}{g})=0.\end{equation}
\end{lemma}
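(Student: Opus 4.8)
The plan is to prove all four identities of Lemma~\ref{lemma_main} by exhibiting, in each case, a triple of angles (or their hyperbolic analogues) whose \emph{tangents} (resp. \emph{sines}, resp. \emph{hyperbolic tangents/sines}) are exactly the three quantities appearing inside the inverse functions, and whose sum is forced to be $0$ (or a multiple of $\pi$ that the branch choice kills). The key algebraic inputs are the relations of Section~\ref{sec_trigo} together with the defining identity $D=-ef-fg-eg$ and the linear relations $e+g=2t$, $e+f=2s$, $f+g=2r$ among the edge labels around a vertex.

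First I would handle \eqref{eq_arctan}, which is the cleanest. Recall the tangent addition law: $\tan(\alpha+\beta+\gamma)$ has numerator $\tan\alpha+\tan\beta+\tan\gamma-\tan\alpha\tan\beta\tan\gamma$. Setting $\tan\alpha=\tfrac{\sqrt{-D}}{e}$, etc., the numerator equals $\sqrt{-D}\bigl(\tfrac1e+\tfrac1f+\tfrac1g\bigr)-(-D)^{3/2}\tfrac{1}{efg}$, and by \eqref{eq_reci} we have $\tfrac1e+\tfrac1f+\tfrac1g=\tfrac{-D}{efg}$, so the numerator is $\sqrt{-D}\cdot\tfrac{-D}{efg}+D\sqrt{-D}\tfrac{1}{efg}=0$. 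Hence $\alpha+\beta+\gamma\in\pi\ZZ$, and one pins down the branch so that the sum is exactly $0$ (e.g.\ by continuity from a configuration where all three edge labels are large, forcing each arctangent small). For \eqref{eq_arctanh} the identical computation works with $\tan$ replaced by $\tanh$ and the addition formula $\tanh(\alpha+\beta+\gamma)$ having numerator $\tanh\alpha+\tanh\beta+\tanh\gamma+\tanh\alpha\tanh\beta\tanh\gamma$ (note the sign): with $\tanh\alpha=\tfrac{\sqrt{D}}{e}$ the numerator is $\sqrt D\bigl(\tfrac1e+\tfrac1f+\tfrac1g\bigr)+D^{3/2}\tfrac1{efg}$, and \eqref{eq_reci} now reads $\tfrac1e+\tfrac1f+\tfrac1g=\tfrac{-D}{efg}$, so this is $\sqrt D\cdot\tfrac{-D}{efg}+D^{3/2}\tfrac1{efg}=0$; the condition $|e|,|f|,|g|>\sqrt D$ guarantees each argument lies in $(-1,1)$ so $\arctanh$ is well-defined and single-valued, and the sum is genuinely $0$ (no $i\pi$ ambiguity).

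For the arcsine identities \eqref{eq_arcsin} and \eqref{eq_arcsinh} I would set $\sin\alpha=\tfrac{e}{rt}\cdot\tfrac{\sqrt{-D}}2$, $\sin\beta=\tfrac{f}{rs}\cdot\tfrac{\sqrt{-D}}2$, $\sin\gamma=\tfrac{g}{st}\cdot\tfrac{\sqrt{-D}}2$ and show $\sin(\alpha+\beta)=\sin(-\gamma)=-\sin\gamma$ together with a matching cosine sign, which forces $\alpha+\beta+\gamma\equiv 0$. The clean route: compute $\cos\alpha$. Since $4r^2s^2t^2-(\text{stuff})$ must be a perfect square for this to work, I expect the identity $\cos\alpha=\tfrac{\sqrt{fg}\cdot(\cdots)}{2rt}\cdot\sqrt{-D}$-type formula; more concretely, $1-\sin^2\alpha = 1-\tfrac{e^2(-D)}{4r^2t^2}=\tfrac{4r^2t^2+De^2}{4r^2t^2}$, and using $D=-ef-fg-eg$ and $2r=f+g$, $2t=e+g$ one checks $4r^2t^2+De^2=(f+g)^2(e+g)^2-e^2(ef+fg+eg)$ factors as $g^2(e+f+g)^2$ or similar, giving $\cos\alpha=\tfrac{g(e+f+g)}{2rt}$ (up to sign). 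Then the addition formula $\sin(\alpha+\beta)=\sin\alpha\cos\beta+\cos\alpha\sin\beta$ becomes a rational identity in $e,f,g$ that I would verify reduces, via $D=-ef-fg-eg$, to $-\sin\gamma$; and similarly $\cos(\alpha+\beta)=\cos\gamma$ pins the branch. For the hyperbolic version \eqref{eq_arcsinh} the same computation runs with $\cosh^2-\sinh^2=1$ replacing $\cos^2+\sin^2=1$, i.e.\ $\cosh\alpha=\sqrt{1+\sin^2\text{-type}}$, and the sign in $D$ flips consistently so that $\sinh(\alpha+\beta)=-\sinh\gamma$.

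The main obstacle I anticipate is \emph{not} the tangent identities (which are immediate from \eqref{eq_reci}) but two bookkeeping issues in the sine case: (i) correctly extracting the square root to get a clean closed form for $\cos\alpha$ (equivalently, proving the relevant discriminant-like expression $4r^2t^2+De^2$ is a perfect square of a polynomial in $e,f,g$ — this is where the linear relations $f+g=2r$, $e+g=2t$ and the definition of $D$ must be combined carefully, and getting every sign right is delicate), and (ii) the branch-cut choices: $\arcsin$ and $\arctan$ are multivalued, so after showing the sum of angles is a half-integer multiple of $\pi$ I must argue it is exactly $0$. I would dispatch (ii) by a connectedness/continuity argument: the set of admissible edge-label triples $(e,f,g)$ with fixed $D<0$ is connected, the sum of the three inverse-function values is continuous on it and takes values in a discrete set, hence is constant, and it equals $0$ at one easily checked point (e.g.\ the symmetric point of the principal topograph of the relevant discriminant). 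For the hyperbolic identities no branch subtlety arises once the stated inequalities $|e|,|f|,|g|>\sqrt D$ are imposed, so those are strictly easier.
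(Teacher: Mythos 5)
Your proposal is correct and follows essentially the same route as the paper: the $\arctan$/$\arctanh$ identities are obtained exactly as in the paper's proof from the three-term addition formulas together with \eqref{eq_reci}, and the $\arcsin$/$\arcsinh$ identities by algebraically verifying the sine addition law (the paper squares twice to clear the radicals, whereas you propose extracting them explicitly). For the record, the perfect square you anticipate does exist but is $4r^2t^2+De^2=\bigl(\tfrac{e^2+D}{2}\bigr)^2$, giving $\cos\alpha=\pm\tfrac{e^2+D}{4rt}$, rather than $g^2(e+f+g)^2$; also note that in the paper's labeling $e+f=2r$ and $f+g=2s$, not the other way around.
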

\begin{proof}
To prove the first identity, note that $-\arcsin A=\arcsin B+\arcsin C$ (for appropriate branches of $\arcsin$) if and only if $-A=B\sqrt{1-C^2}+C\sqrt{1-B^2}$, we then square, and then square again; then all terms cancel. The same argument applies for $\arcsinh$. 
For $\arctan$ and $\arctanh$ these formulas follow from \eqref{eq_reci} and the identities
\[
\arctan(x_1) + \arctan(x_2) + \arctan(x_3) = \arctan\left( \frac{x_1 + x_2 + x_3 - x_1x_2x_3}{1 - x_1x_2 - x_2x_3 - x_3x_1} \right)
\]
\[
\text{arctanh}(x_1) + \text{arctanh}(x_2) + \text{arctanh}(x_3) = \text{arctanh}\left( \frac{x_1 + x_2 + x_3 + x_1x_2x_3}{1 + x_1x_2 + x_2x_3 + x_3x_1} \right)
\]
provided the denominators are nonzero.
\end{proof}

Our arguments rely on the trigonometric identities derived in Section~\ref{sec_trigo}. To proceed, we need the following lemma, in the notation presented in Figure~\ref{apfig2}.
\begin{figure}[h]
\centering
\begin{tikzpicture}[scale=0.36, every node/.style={font=\Large}, 
    arrow/.style={-{Latex[length=3mm]}, thick},
    redlabel/.style={red, font=\Large}]

    \coordinate (B1) at (5,5);
    \coordinate (B2) at (3,8);
    \coordinate (B3) at (3,2);
    \coordinate (B4) at (9,5);

    \draw[arrow] (B1) -- (B2);
    \draw[arrow] (B3) -- (B1);
    \draw[arrow] (B1) -- (B4);

    \node[redlabel] at (7,4.2) {$g$};
    \node[redlabel] at (3.5,4.1) {$e$};
    \node[redlabel] at (4.5,7.1) {$f$};

    \node at (2,5) {$r$};
    \node at (7,7.2) {$s$};
    \node at (7,2.8) {$t$};

\end{tikzpicture}
\caption{Local structure of a topograph near a vertex.}
\label{apfig2}
\end{figure}
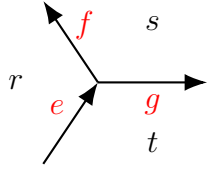

\begin{lemma}
\label{lemma_main2} Consider a topograph with discriminant $D<0$ and its vertex with labels as in  Figure~\ref{apfig2}.  For suitable branches of $\arcsin$ and $\arctan$, we have
$$\arcsin(\frac{e}{rt}\cdot\frac{\sqrt{-D}}{2}) = \arcsin(\frac{f}{rs}\cdot\frac{\sqrt{-D}}{2})+\arcsin(\frac{g}{st}\cdot\frac{\sqrt{-D}}{2})$$

$$\arctan(\frac{\sqrt{-D}}{e})=\arctan(\frac{\sqrt{-D}}{f})+\arctan(\frac{\sqrt{-D}}{g})$$

For a topograph with discriminant $D>0$, we have
$$\arcsinh(\frac{e}{rt}\cdot\frac{\sqrt{D}}{2}) = \arcsinh(\frac{f}{rs}\cdot\frac{\sqrt{D}}{2})+\arcsinh(\frac{g}{st}\cdot\frac{\sqrt{D}}{2})$$

$$\arctanh(\frac{\sqrt{D}}{e})=\arctanh(\frac{\sqrt{D}}{f})+\arctanh(\frac{\sqrt{D}}{g})$$

\end{lemma}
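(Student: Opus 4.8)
\emph{Proof sketch.} The identity is nothing but Lemma~\ref{lemma_main} read at a vertex where one incident edge has been reoriented. Comparing Figure~\ref{apfig2} with the local picture of Figure~\ref{apfig}, b), the regions $r,s,t$ and the edges labelled $f,g$ sit in the same place and carry the same orientation, while the edge labelled $e$ now points \emph{into} the vertex rather than out of it. Reversing an edge negates its label, so in the all-outgoing convention of Lemma~\ref{lemma_main} this edge carries the label $-e$. The plan is therefore to substitute $e\mapsto -e$ into \eqref{eq_arcsin}--\eqref{eq_arctanh} and invoke the oddness of $\arcsin,\arctan,\arcsinh,\arctanh$ on their natural domains.

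Explicitly, for $D<0$ equation \eqref{eq_arcsin} applied with the $e$-edge reversed reads
\[
\arcsin\left(\tfrac{-e}{rt}\cdot\tfrac{\sqrt{-D}}{2}\right)+\arcsin\left(\tfrac{f}{rs}\cdot\tfrac{\sqrt{-D}}{2}\right)+\arcsin\left(\tfrac{g}{st}\cdot\tfrac{\sqrt{-D}}{2}\right)=0,
\]
and since $\arcsin(-x)=-\arcsin(x)$ this rearranges into the first asserted identity; the $\arcsinh$ statement for $D>0$ comes the same way from \eqref{eq_arcsinh}. For the $\arctan$ (resp.\ $\arctanh$) identity one may either reverse the $e$-edge in \eqref{eq_arctan} (resp.\ \eqref{eq_arctanh}) and use oddness, or argue directly from the addition formula recalled in the proof of Lemma~\ref{lemma_main}: here $\arctan(\tfrac{\sqrt{-D}}{f})+\arctan(\tfrac{\sqrt{-D}}{g})$ equals the $\arctan$ of $\tfrac{\sqrt{-D}(f+g)}{fg+D}$, and in the orientation of Figure~\ref{apfig2} the discriminant relation becomes $D=ef+eg-fg$, whence $fg+D=e(f+g)$ and the quotient collapses to $\tfrac{\sqrt{-D}}{e}$ (note $f+g=2s\neq0$ by admissibility); the hyperbolic case is word-for-word the same.

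The one point that needs care is the choice of branches. Lemma~\ref{lemma_main} holds only for suitable branches, and when a relation $A+B+C=0$ is split as $A=-(B+C)$ one must check that $A,B,C$ are read on branches for which $\arcsin(-x)=-\arcsin(x)$ (and its analogues) applies verbatim and for which the sum of the two principal-branch terms on the right is not shifted by a full period. For $D>0$ this is automatic: $\arcsinh\colon\RR\to\RR$ is an odd bijection, and when $|e|,|f|,|g|>\sqrt D$ --- the condition under which the $\arctanh$-terms are defined at all --- each $\sqrt D/e$ lies in $(-1,1)$, where $\arctanh$ is odd and injective. For $D<0$ one uses the sign structure of a definite topograph: normalizing the region labels to be positive, any two of the outgoing edge labels $r+t-s,\,r+s-t,\,s+t-r$ sum to one of $2r,2s,2t>0$, so at most one of them is negative; this pins down the signs of the three $\arcsin$-arguments and places the associated angles in the range where the split identity holds with the natural branches. (Equivalently, one may appeal to the geometric reading of these quantities as angles in Section~\ref{sec_duality}.) I expect this branch bookkeeping --- not the algebra --- to be the only real obstacle, and it is in any case absorbed by the phrase ``suitable branches'' in the statement.
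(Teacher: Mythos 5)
Your proposal is correct and matches the paper's proof, which simply observes that Lemma~\ref{lemma_main2} follows from Lemma~\ref{lemma_main} because only the orientation (hence the sign) of the edge labelled $e$ changes, so one substitutes $e\mapsto -e$ and uses oddness of the four functions. Your additional verification via the addition formula and the discussion of branch choices go beyond the paper's one-line argument but are consistent with it.
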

\begin{proof}
Follows immediately from Lemma~\ref{lemma_main} since we only change the orientation of the edge with label $e$.
\end{proof}

\section{Positive-definite case, $D<0$}

This section forms the core of the paper. In Section~\ref{sec_hurwitz} we present a telescopic proof of \eqref{sum_1}, thus highlighting the main idea in the simplest case. In Section~\ref{sec_mt} we apply the telescoping fromulas to the lattice generated by $(1,0)$ and $(1,\mu)$ and obtain Mordell-Tornheim series $$\sum\limits_{n,m\geq 1}\frac{1}{n^2m^2(n+m)^2}=\frac{\zeta(6)}{3}$$ taking the limit $\mu\to 0$. In Section~\ref{sec_duality}, we explain the geometric meaning of the summand $\frac{1}{efg}$ and illustrate the duality between formulas including region labels and formulas including edge labels.

\subsection{Hurwitz series} 
\label{sec_hurwitz}

Let $\det(x, y)$ denote the determinant of the $2\times 2$ matrix with columns $x,y\in\ZZ^2$. Let $$A = \big\{ (x, y) \mid x,y\in \mathbb{Z}_{\geq 0}^2, \det(x, y) = 1 \big\},$$ i.e., the set of pairs of lattice vectors $x=(a,b),y=(c,d)$ in the first quadrant that span lattice parallelograms of oriented area one ($ad-bc=1$).

We prove the Hurwitz formula \eqref{sum_1} via a telescopic method.  
\begin{theorem} 
\label{th_one}
$$4\sum\limits_{A} \frac{1}{\|x\|^2\|y\|^2 \|x+y\|^2} = \pi.$$ 
\end{theorem}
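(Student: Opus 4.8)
The plan is to realize the index set $A$ as the vertex set of the ``upper half'' of a topograph and then invoke the telescoping identity \eqref{eq:vertex-telescope} directly, rather than treating this as an independent computation. Concretely, take the quadratic form $q(v)=\|v\|^2=m^2+n^2$, whose topograph $\mathcal{T}$ has discriminant $D=-4$. I would choose the root $E$ to be the edge whose adjacent regions correspond to the vectors $(1,0)$ and $(0,1)$, oriented toward the vertex $V_0$ with superbase $\{(1,0),(0,1),(-1,-1)\}$; here $r_0=q(1,0)=1$, $t_0=q(0,1)=1$, and the oriented edge label is $e_0 = r_0+t_0-s_0$ where $s_0=q(1,1)=2$, so $e_0 = 1+1-2 = 0$. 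That forces a small fix: the admissibility hypothesis fails at $V_0$ when $e_0=0$, so instead I would pick a root one step further in, or more cleanly, exploit the symmetry. The pairs $(x,y)\in A$ with $x,y$ in the \emph{open} first quadrant are exactly the superbases all of whose three vectors lie in a common half-plane; these are precisely the vertices lying strictly on one side of the ``river-like'' geodesic through the regions $(1,0)$ and $(0,1)$, i.e. one upper half. (The vectors $(1,0)$ and $(0,1)$ themselves, and the edge between them, sit on the boundary.) So $A$ decomposes into the vertex set of an upper half $\mathcal{T}'$ together with boundary terms, and by the four-fold dihedral symmetry of the lattice $\ZZ^2$ (reflections across the axes and the diagonal), the factor $4$ on the left-hand side accounts exactly for the four images of $\mathcal{T}'$ under the symmetry group fixing the decomposition, so that $4\sum_A \frac{1}{\|x\|^2\|y\|^2\|x+y\|^2}$ collapses to a single telescoped sum plus a controlled boundary contribution.

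The key steps, in order: (1) set up the correspondence between pairs $(x,y)\in A$ and vertices/superbases, checking that $\frac{1}{\|x\|^2\|y\|^2\|x+y\|^2}=\frac{1}{rst}$ at the corresponding vertex; (2) identify the admissible upper half $\mathcal{T}'$ (rooted at a suitable edge near the ``corner'' where the regions $(1,0),(0,1)$ meet) and verify $rst\neq 0$ throughout — since $D=-4<0$ the form is positive definite, so every region label is a positive integer and admissibility is automatic away from the degenerate corner edge; (3) apply \eqref{eq:vertex-telescope} with $D=-4$ to each vertex of a finite exhaustion $\mathcal{T}''_N$, so that interior edge contributions cancel in pairs, leaving the root term and the crown terms; (4) verify the smallness conditions $\max_{E\in C_N}|\tfrac{e}{rt}(E)|\to 0$ and $\sum_{E\in C_N}|\tfrac{e}{rt}(E)|^3\to 0$ — this uses that region labels grow (the smallest label on a crown at distance $N$ tends to infinity), which follows from the "climbing lemma"-type monotonicity of labels along paths away from the river in a positive-definite topograph; (5) use Lemma~\ref{lemma_main}, \eqref{eq_arcsin}, to see that $\sum_{E\in C_N}\tfrac{e}{rt}(E)$ telescopes (via the additivity of $\arcsin$ around each interior vertex, as in Lemma~\ref{lemma_main2}) to $\tfrac{2}{\sqrt{-D}}\arcsin\!\bigl(\tfrac{e_0}{r_0t_0}\cdot\tfrac{\sqrt{-D}}{2}\bigr)$ up to an $O(\sum|\tfrac{e}{rt}|^3)$ error; (6) assemble: $\sum_{V\in\mathcal{T}'}\frac{1}{|rst|} = \frac{1}{D}\bigl(\frac{e_0}{r_0t_0}-\frac{2\arcsin(\frac{e_0}{r_0t_0}\cdot\frac{\sqrt{-D}}{2})}{\sqrt{-D}}\bigr)$, which is exactly \eqref{top1}, then multiply by the symmetry factor and reconcile the boundary terms to recover $\pi$. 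With the root chosen so that $e_0/(r_0t_0)=1$ and $D=-4$, the right-hand side of \eqref{top1} becomes $\frac{1}{-4}(1 - \arcsin(1)) = \frac{1}{-4}(1-\tfrac{\pi}{2}) = \tfrac{\pi}{8}-\tfrac14$, and after accounting for the four symmetric copies and the edge/corner bookkeeping this yields $\pi$.

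The main obstacle, as I see it, is step (4)–(5): making the crown-limit argument rigorous. One must show that the crown labels $r,s,t$ grow fast enough along \emph{every} branch of the tree that both $\max|\tfrac{e}{rt}|\to0$ and the cubic sum $\sum_{C_N}|\tfrac{e}{rt}|^3\to0$; the number of crown edges at distance $N$ grows like $2^N$, so one needs the labels to grow at least geometrically to kill the cubic sum, and one needs the $O(x^3)$ remainder in $x\sim\arcsin x$ to be summable against the crown. For $D=-4$ this is genuinely true — the minimal region value at depth $N$ grows roughly like a Fibonacci/Stern–Brocot rate — but packaging it cleanly (uniformly over all branches, and simultaneously handling the telescoping of the $\arcsin$ terms through vertices where orientations flip, via Lemma~\ref{lemma_main2}) is the delicate part. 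A secondary, more bookkeeping-flavored obstacle is correctly matching the degenerate root (where $e_0=0$ on the ``true'' corner edge) with the statement of Theorem~\ref{thm_5}, which presumes admissibility; I would handle this by rooting slightly inside $\mathcal{T}'$, applying \eqref{top1} there, and adding back the finitely many omitted vertices by hand.
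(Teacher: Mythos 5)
Your overall telescoping strategy is the paper's, but two concrete points go wrong. First, the factor $4$ in the statement is not a dihedral symmetry count: the index set $A$ already corresponds bijectively to the vertices of \emph{one} upper half of the topograph of $m^2+n^2$ (the half on the $(1,1)$ side of the edge $\{\pm\vect{1}{0},\pm\vect{0}{1}\}$), that half sums to $\pi/4$, and the $4$ is just the constant $-D=|{-4\det(x,y)^2}|$ produced by the telescoping identity \eqref{eq_1}. There are no ``four images of $\mathcal T'$'': the dihedral symmetries of $\ZZ^2$ identify opposite quadrants (regions are labeled by $\pm v$), so the topograph has only two halves relative to that root. Your own arithmetic betrays this: rooting at the edge with $r_0=1,t_0=2,e_0=2$ gives $\tfrac{\pi}{8}-\tfrac14$ per cone, and you need \emph{two} such cones plus the corner vertex $\{(1,0),(0,1),(-1,-1)\}$ contributing $\tfrac12$ to get $2(\tfrac{\pi}{8}-\tfrac14)+\tfrac12=\tfrac{\pi}{4}$; four copies plus that corner give $\tfrac{\pi}{2}-\tfrac12\ne\tfrac{\pi}{4}$.

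Second, and more seriously, your step (4) rests on a false premise. You propose to kill $\sum_{E\in C_N}|e/rt|^3$ by showing the labels grow geometrically to beat the $2^N$ crown edges, but the minimal region label at depth $N$ grows only polynomially (along the branch hugging the region $(1,0)$ the superbase at depth $N$ has labels $1,\,N^2+1,\,(N+1)^2+1$), so the crude bound $|C_N|\cdot\max|e/rt|^3$ diverges. The paper's proof supplies the missing idea geometrically: writing $\|x\|\|y\|\sin\alpha=\det(x,y)=1$, each crown term is $F=2\sin\alpha\cos\alpha=\sin 2\alpha$, and the angles $\alpha_E$ of the crown parallelograms \emph{exactly partition} $\pi/2$. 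Hence $\sum_{C_N}|e/rt|\le\pi$ is a priori bounded and $\sum_{C_N}|e/rt|^3\le\pi\max_{C_N}|e/rt|^2\to0$, needing only $\max\alpha_E\to0$ (which follows since one of $\|x\|,\|y\|\to\infty$ at depth $N$); the crown sum then converges to $2\cdot\tfrac{\pi}{2}=\pi$ with error $O(\max\alpha_E^2)$. Equivalently, in your arcsin language, the conservation law of Lemma~\ref{lemma_main2} gives $\sum_{C_N}\arcsin(\tfrac{e}{rt})=\text{const}$, which bounds $\sum_{C_N}|e/rt|$ and hence the cubic error. Without this conservation/partition argument the limit in steps (4)--(5) is not established. (A minor further point: deducing Theorem~\ref{th_one} from \eqref{top1} inverts the paper's logic, since \eqref{top1} is itself proved by reducing a general definite form to $\mu\|v\|^2$ and rerunning exactly this argument.)
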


\begin{proof} Define the function $F(x,y) = \frac{2x\cdot y}{\|x\|^2 \|y\|^2}$, $F: (\ZZ^2)^2\to \RR$. An explicit computation shows that
\begin{equation}
\label{eq_1}
F(x,y)-F(x+y,y)-F(x,x+y)=\frac{-4 \det(x,y)^2 }{\|x\|^2\|y\|^2\|x+y\|^2}.
\end{equation}

Consider the sum of these expressions 
over the set $$\{(x,y)| x,y\in \ZZ_{\geq 0}^2\cap [0,n]^2, \det(x, y) = 1\}.$$ 
After cancelling terms with opposite signs, only $F(\vect{1}{0},\vect{0}{1})$ remains,  together with the sum of  $-F(x+y,y)-F(x,x+y)$ over the set $$\left\{(x,y)| x,y\in \ZZ_{\geq 0}^2\cap [0,n]^2, \det(x, y) = 1, x+y\notin \ZZ_{\geq 0}^2\cap [0,n]^2\right\}.$$

Note that each element $(x,y)$ of the above set represents a parallelogram spanned by $x$ and $y$. All these parallelograms have area $1$, and the angles at the origin formed by their sides partition the right angle $\pi/2$ of the first quadrant. Next, $\frac{2x\cdot y}{|x|^2\cdot |y|^2}$ is $2\alpha$  up to third-order terms,  where  $\alpha$ is the angle between $x$ and $y$. Indeed, 
$\sin \alpha\cdot\|x\| \|y\|=1$ and

$$\frac{2x\cdot y}{\|x\|^2 \|y\|^2} = 2\cos{\alpha}\sin\alpha = \sin 2\alpha =  2\alpha -\frac{(2\alpha)^3}{3!}+\dots$$

Also, for $\sum_{i=1}^n\alpha_i=\pi/2$ we have that $$|2\sum_{i=1}^n\sin\alpha_i\cos\alpha_i - \pi |\leq \frac{8\pi}{3}\max_{i=1..N}|\alpha_i|^2\to 0 \text{ as } \max_{i=1..N}|\alpha_i|\to 0.$$
Thus, since $F(\vect{1}{0},\vect{0}{1})=0$, as $n\to \infty$, we obtain

\[F(\vect{1}{0},\vect{0}{1}) + \sum (-F(x+y,y)-F(x,x+y))\to -\pi.\]

Thus, the sum of \eqref{eq_1} considered above yields the desired formula.
\end{proof}

\subsection{Mordell-Tornheim series}
\label{sec_mt}
Fix  $\mu>0$ and consider the sum
\[
\Sigma(\mu)=\sum_{\substack{a \geq b \\ c \geq d \\ ad - bc = 1}} \frac{4\mu^2}{(a^2 + \mu^2 b^2)(c^2 + \mu^2 d^2)\left((a + c)^2 + \mu^2(b + d)^2\right)}.
\]

Equivalently, we may write
\[
\Sigma(\mu)=\sum \frac{4|\det(x,y)|^2}{\|x\|^2 \|y\|^2 \|x + y\|^2},
\]
where 
\[
x, y \in \ZZ_{\geq 0}\cdot\vect{1}{0}  + \ZZ_{\geq 0}\cdot\vect{1}{\mu},
\]

Since $\det(\vect{1}{0},\vect{1}{\mu})=\mu$ (cf. \eqref{eq_1}), this sum can be written via topographs as

\[
\frac{1}{4\mu^2}\Sigma(\mu) =  \sum_{\substack{
\topo{T'}}} 
\frac{1}{|rst|},
\]
where $\mathcal T$ is the topograph for the quadratic form $q(m,n)=(m+n)^2+\mu^2n^2$ and the root $E$ of $\mathcal T'$ corresponds to the edge with $$r_0=q(\vect{1}{0}), t_0=q(\vect{0}{1}), e_0 = q(\vect{1}{1})-q(\vect{1}{1})-q(\vect{0}{1}).$$

Applying the telescoping method with the same function $F$ as in the proof of Theorem~\ref{th_one}, we obtain
\[
\Sigma(\mu)= - \left( F\left(\vect{1}{0},\vect{1}{\mu}\right) - \lim\limits_{N\to\infty}\sum_{E\in C_N} F(E_u, E_v) \right),
\]
where for $E\in C_N$ in the crown, with adjacent regions corresponding to vectors $(a,b),(c,d)$ we define 
$$E_u = a\cdot\vect{1}{0}  + b\cdot\vect{1}{\mu}, E_v=c\cdot\vect{1}{0}  + d\cdot\vect{1}{\mu}.$$

Substituting $F\left(\vect{1}{0},\vect{1}{\mu}\right) = \frac{2}{1+\mu^2} = \frac{e_0}{r_0t_0}$ into the formula above and using that the area of parallelogramms is $\mu$ and they partition the angle between $\vect{1}{0}, \vect{1}{\mu}$, which is $\arctan \mu$, we obtain

\begin{theorem}
\[
\sum_{\substack{a \geq b \\ c \geq d \\ ad - bc = 1}} \frac{1}{(a^2 + \mu^2 b^2)(c^2 + \mu^2 d^2)\left((a + c)^2 + \mu^2(b + d)^2\right)} =  \frac{1}{4\mu^2}\left( \frac{2\arctan \mu}{\mu} - \frac{2}{1 + \mu^2} \right).
\]
\end{theorem}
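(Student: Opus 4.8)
The plan is to recognize the stated sum as a vertex sum over a half‑topograph and then replay the telescoping argument from the proof of Theorem~\ref{th_one} on a $\mu$‑rescaled lattice. First I would apply the change of variables $(a,b)\mapsto v:=(a-b,b)$ and $(c,d)\mapsto v':=(c-d,d)$, a bijection of $\{a\ge b\ge 0\}$ onto $\ZZ_{\ge0}^2$ which sends $ad-bc=1$ to $\det(v,v')=1$ and turns $a^2+\mu^2b^2,\ c^2+\mu^2d^2,\ (a+c)^2+\mu^2(b+d)^2$ into $q(v),\ q(v'),\ q(v+v')$ for $q(m,n)=(m+n)^2+\mu^2n^2$. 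A pair $(v,v')\in(\ZZ_{\ge0}^2)^2$ with $\det(v,v')=1$ is exactly a consistently oriented superbase lying in the first‑quadrant sector, so the left‑hand side equals $\sum_{V}\tfrac1{q(v)q(v')q(v+v')}$ over the vertices $V$ of the upper half $\mathcal T'$ of the topograph of $q$ whose root $E$ is the edge between the regions $r_0=q(1,0)=1$ and $t_0=q(0,1)=1+\mu^2$. Here $D=-4\mu^2$; the third region at the target $V_0$ of $E$ is $q(1,1)=4+\mu^2$, so $|e_0|=|q(1,1)-r_0-t_0|=2$, and since $q$ is positive definite and $2B(v,w)=2(v_1+v_2)(w_1+w_2)+2\mu^2v_2w_2>0$ for nonzero first‑quadrant $v,w$, no label on $\mathcal T'$ vanishes and $\mathcal T'$ is admissible.

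Next I would introduce the linear map $T(m,n)=(m+n,\mu n)$, so that $q(v)=\|Tv\|^2$ and $\det(Tv,Tw)=\mu\det(v,w)$, and apply \eqref{eq_1} to the vectors $Tv$: at a vertex (where $|\det(v,w)|=1$) it reads $F(Tv,Tw)-F(T(v+w),Tw)-F(Tv,T(v+w))=\dfrac{-4\mu^2}{q(v)q(w)q(v+w)}$, with $F(x,y)=\dfrac{2x\cdot y}{\|x\|^2\|y\|^2}$. Summing over the vertices of a finite exhausting subtree $\mathcal T''_N\subset\mathcal T'$ with all leaves at graph distance $N$ from $E$, the interior $F$‑terms cancel pairwise exactly as in the proof of Theorem~\ref{th_one}, leaving the root term $F\big(T(1,0),T(0,1)\big)=F\big((1,0),(1,\mu)\big)=\dfrac{2}{1+\mu^2}$ and crown terms $-F(TE_u,TE_v)$ indexed by the crown edges $E\in C_N$, where $E_u,E_v\in\ZZ_{\ge0}^2$ are the two regions meeting $E$. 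Thus $-4\mu^2\sum_{V\in\mathcal T''_N}\tfrac1{q(v)q(v')q(v+v')}=\dfrac{2}{1+\mu^2}-\sum_{E\in C_N}F(TE_u,TE_v)$.

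It remains to compute $\lim_{N\to\infty}\sum_{E\in C_N}F(TE_u,TE_v)$. Each parallelogram spanned by $TE_u,TE_v$ has area $|\det(TE_u,TE_v)|=\mu$, and these parallelograms subtend angles $\alpha_E$ at the origin that partition the sector between $T(1,0)=(1,0)$ and $T(0,1)=(1,\mu)$, of total width $\arctan\mu$. From $\mu=\|TE_u\|\,\|TE_v\|\sin\alpha_E$ one gets $F(TE_u,TE_v)=\dfrac{2\cos\alpha_E}{\|TE_u\|\,\|TE_v\|}=\dfrac{\sin2\alpha_E}{\mu}$, hence $\sum_{E\in C_N}F(TE_u,TE_v)=\tfrac1\mu\sum_{E\in C_N}\sin2\alpha_E$. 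Since any region at graph distance $\ge N$ corresponds to a vector whose norm grows unboundedly with $N$, we have $\max_{E\in C_N}\alpha_E\to0$; together with $\sum_{E\in C_N}\alpha_E=\arctan\mu$ and the elementary bound $\big|\sum_{E\in C_N}(\sin2\alpha_E-2\alpha_E)\big|\le\tfrac43(\max_{E\in C_N}\alpha_E)^2\arctan\mu$, this gives $\sum_{E\in C_N}\sin2\alpha_E\to2\arctan\mu$, so the crown contribution tends to $\dfrac{2\arctan\mu}{\mu}$. Passing to the limit, $-4\mu^2\sum_{\mathcal T'}\tfrac1{q(v)q(v')q(v+v')}=\dfrac{2}{1+\mu^2}-\dfrac{2\arctan\mu}{\mu}$, which after dividing by $-4\mu^2$ and undoing the change of variables is the claimed identity.

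The main obstacle is this crown limit, i.e. the passage $\sum_{E\in C_N}\sin2\alpha_E\to2\arctan\mu$; here it is mild, because positive‑definiteness of $q$ forces all region labels to grow along $\mathcal T'$, so both the uniform smallness $\max_E\alpha_E\to0$ and the cubic‑tail estimate hold automatically — this is exactly the smallness condition on the crown of Section~\ref{sec_ideas}. The only other delicate point is the sign bookkeeping in the pairwise cancellation of interior $F$‑terms, which is identical to that in the proof of Theorem~\ref{th_one}. As a cross‑check, the identity also follows from \eqref{top1} with $D=-4\mu^2$ and $\tfrac{e_0}{r_0t_0}=\tfrac{2}{1+\mu^2}$ via $\arcsin\tfrac{2\mu}{1+\mu^2}=2\arctan\mu$ (for the suitable branch), though the direct argument sidesteps that branch subtlety and stays within the elementary framework of Section~\ref{sec_hurwitz}.
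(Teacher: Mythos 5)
Your proposal is correct and follows essentially the same route as the paper: rewrite the sum over the cone spanned by $(1,0)$ and $(1,\mu)$ (equivalently, the half-topograph of $q(m,n)=(m+n)^2+\mu^2 n^2$), telescope the identity \eqref{eq_1} with $F(x,y)=\tfrac{2x\cdot y}{\|x\|^2\|y\|^2}$, keep the root term $\tfrac{2}{1+\mu^2}$, and evaluate the crown limit geometrically via area-$\mu$ parallelograms whose angles partition $\arctan\mu$. You merely supply more detail than the paper does (the explicit change of variables, admissibility, and the error bound for the crown sum), all of which checks out.
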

Taking the limit as \( \mu \to 0 \) gives $\Sigma(\mu)/{4\mu^2}\to 1/3$.
Indeed,
\begin{equation}
\label{eq_mordell}
\sum_{\substack{a,c\geq 1\\ \gcd(a,c)=1}} \frac{1}{a^2 c^2 (a + c)^2} = \frac{1}{3},
\end{equation}
because for each such a pair $(a,c)$ there exists a unique pair $(b,d)\in\ZZ_{\geq 0}^2$ with $ad-bc=1, b\leq a, d\leq c$. This identity \eqref{eq_mordell} is well-known \cite{tornheim1950harmonic, hata1995farey}.

Since on both sides of equality we have analytic functions, we may substitute \( \mu = \frac{i}{2} \). Then,
\[
\sum_{\substack{a \geq b \\ c \geq d \\ ad - bc = 1}} \frac{1}{(4a^2 - b^2)(4c^2 - d^2)\left(4(a + c)^2 - (b + d)^2\right)}= 
\]
\[
= \frac{1}{64} \cdot \left( \frac{\arctan \tfrac{i}{2}}{\tfrac{i}{2}} - \frac{1}{1 + (\tfrac{i}{2})^2} \right) \bigg/ \left(2 \cdot \left( \tfrac{i}{2} \right)^2\right)=
 \frac{\frac{4}{3} - \ln 3}{32}\ \ .
\]

%
%
%
%
%
%
%
%
%
%
%
%


\begin{remark}
Since
$$\sum_{\substack{a \geq b \\ c \geq d \\ ad - bc = 1}} \frac{1}{(a^2 + \mu^2 b^2)(c^2 + \mu^2 d^2)\left((a + c)^2 + \mu^2(b + d)^2\right)}=\frac{1}{2\mu^2} \left( \frac{\arctan \mu}{\mu} - \frac{1}{1 + \mu^2} \right) =  $$

$$=\sum_{k=0}^\infty (-1)^k\frac{k+1}{2k+3}\mu^{2k} = \frac{1}{3}-\frac{2}{5}\mu^2+\frac{3}{7}\mu^4-\dots.$$
\end{remark}

Thus, by comparing coefficients of $\mu^{2k}$ one can derive further identities. For example, examining the coefficient of $\mu^2$ we obtain
$$\sum_{\substack{a \geq b \\ c \geq d \\ ad - bc = 1}}\frac{1}{a^2c^2(a+c)^2}\left(\frac{b^2}{a^2}+\frac{d^2}{c^2}+\frac{(b+d)^2}{(a+c)^2}\right) = \frac{2}{5}\ .$$

\subsection{Proof of Theorem~\ref{thm_5}, first part} \label{sec_proof1}

Let us interpret Theorem~\ref{th_one} in terms of topographs. Indeed, it is enough to plug into the folowing formula \eqref{top1} the edge $E=\{\pm \vect{1}{0},\pm\vect{0}{1}\}$ and the positive-definite quadratic form $q(n,m)=n^2+m^2$, with $D=-4$, $e_0=0, r_0=t_0=1$.
\[
 \sum_{\substack{
\topo{T'}}} 
\frac{1}{|rst|} = \frac{1}{D}\left(\frac{e_0}{r_0t_0}-\frac{2\arcsin(\frac{e_0}{r_0t_0}\cdot\frac{\sqrt{-D}}{2})}{\sqrt{-D}}\right),
\]

To prove Theorem~\ref{th_one}, we performed telescoping of $$\frac{e}{rt}=2\frac{x\cdot y}{|x|^2 |y|^2}.$$

The $\frac{1}{D}=-\frac{1}{4}$ corresponds to the factor $-4$ in \eqref{eq_1}. The only surviving term $\frac{e_0}{r_0t_0}$ is zero in this case.

Each crown term, up to third order, was $$\frac{2\arcsin(\frac{e_0}{r_0t_0}\cdot\frac{\sqrt{-D}}{2})}{\sqrt{-D}} = \arcsin(\frac{e_0}{r_0t_0}).$$

So the geometric meaning of the terms that we telescope is the angle between vectors. Note that in order to obtain the telescoping relation, one may consider quadratic forms other  than $q(v) = \|v\|^2$. 

\begin{proof}[Proof of Theorem~\ref{thm_5}]

Without loss of generality we may assume that the root edge $E$ corresponds to $\{\pm e_1,\pm e_2\}$, $e_1=\vect{1}{0}, e_2=\vect{0}{1}$. Call by $q'$ the positive definite quadratic form used to construct the topograph.

Recall that $q'$ is $SL(2,\RR)$-equivalent to $\mu \cdot q$ for  $q(v)=\|v\|^2$ and $\mu =\frac{\sqrt{-D}}{2}$. Under this equivalence the vectors $e_1,e_2$ go to vectors $e_1',e_2'$ such that $$\mu q(e_i')=q'(e_i), i=1,2.$$ Values of $r,s,t$ near $\mathcal T'$ for the topograph of $q'$ are the values of $\mu q$ on vectors $me_1'+ne_2'$ for $m,n\in\ZZ_{\geq 0}, \gcd(m,n)=1$.

 Now using the same $F$ as in the proof of Theorem~\ref{th_one}, we obtain

\[ \sum_{\substack{
\topo{T'}}} 
\frac{1}{|rst|} = \frac{1}{\mu^3}\frac{-1}{4}\left(F(e_1',e_2') - 2(\text{the\ angle\ between } e_1',e_2')\right) =\]
\[=\frac{-1}{4\mu^2} \left(\frac{e_0}{r_0t_0}-\frac{\arcsin(\mu\frac{e_0}{r_0t_0})}{\mu} \right)= \frac{1}{D}\left(\frac{e_0}{r_0t_0}-\frac{2\arcsin(\frac{e_0}{r_0t_0}\cdot\frac{\sqrt{-D}}{2})}{\sqrt{-D}}\right)\]

Note that in order to write the identity $$2(\text{the\  angle\ between } e_1',e_2') = \arcsin(\mu\frac{e_0}{r_0t_0})=\arcsin(\tfrac{e_0}{r_0t_0}\cdot\tfrac{\sqrt{-D}}{2})$$ we have to choose the suitable  brach of arcsine as the angle between $e_1',e_2'$ may be bigger than $\pi/2$.
\end{proof}

Another (and somewhat more transparent) way to prove this theorem might be to sum the identity ~\eqref{eq_stu}

$$\frac{g}{st}+\frac{f}{rs}+\frac{e}{rt}=\frac{gr+ft+es}{rst}=\frac{g(f+e)+f(e+g)+e(f+g)}{2rst}=\frac{-D}{rst}$$

over all the vertices of $\mathcal T'$ and then use \eqref{eq_arcsin}
$$\arcsin(\tfrac{e}{rt}\cdot\tfrac{\sqrt{-D}}{2}) + \arcsin(\tfrac{f}{rs}\cdot\tfrac{\sqrt{-D}}{2})+\arcsin(\tfrac{g}{st}\cdot\tfrac{\sqrt{-D}}{2})=0$$
and $$\tfrac{e}{rt}\cdot\tfrac{\sqrt{-D}}{2}=\arcsin(\tfrac{e}{rt}\cdot\tfrac{\sqrt{-D}}{2})+O(\big|\tfrac{e}{rt}\big|^3).$$

Then it remains to prove that the sum of $\big|\tfrac{e}{rt}\big|^3$ over the crown $C_N$ tends to $0$ as $N\to \infty$, and this is hard to witthout referring to the nature of $r,s,t$ and explicit estimates about their growth. Also, the choice of the right branch of arcsine is much less transparent. 

\begin{remark}
Consider the sum of $\frac{1}{|rst|}$ over the complement $\mathcal T\setminus \mathcal T'$ to $\mathcal T'$. Using the above procedure, it evauates as

$$\frac{1}{\mu^3}\frac{-1}{4}\left(F(e_1',-e_2') - 2(\text{the\ angle\ between } e_1',e_2')\right)$$

and the sum over the whole topograph $\mathcal T$ is therefore $\frac{\pi}{2\mu^3} = \frac{4\pi}{|D|^{3/2}}$ as it is predicted by Theorem~\ref{th_three}.

\end{remark}

\subsection{Duality: inside and outside the circle, Theorem~\ref{thm_5}, second part}\label{sec_duality}

The telescopic identities admit a natural geometric interpretation, revealing a duality between sums of products of reciprocals of region labels and sums of the products of reciprocals of edge labels of the topograph. The product of the region labels at a vertex corresponds to the area of the inscribed triangle, whereas the product of the edge labels corresponds to the area of the triangle formed by the tangent lines at the corresponding points (cf. Legendre duality in \cite{certain}).

To relate the Farey tessellation to points on the unit circle, we use the rational parametrization of $x^2+y^2=1$ by 
 $$f:\vect{a}{b}\to\left(\frac{2ab}{a^2+b^2}, \frac{a^2-b^2}{a^2+b^2}\right).$$ 
 
Note that $f(\vect{1}{0})=(0,1),f(\vect{0}{1})=(0,-1)$. By a direct calculation, the area of the triangle with vertices  
$f(\vect{a}{b}),f(\vect{c}{d}),f(\vect{a+c}{b+d})$ equals to 
\begin{equation}
\label{eq_3}
\frac{2|ad-bc|^3}{(a^2+b^2)\cdot(c^2+d^2)\cdot ((a+c)^2+(b+d)^2)}=\frac{2}{rst}.
\end{equation}
Here $r,s,t$ denote the values of the quadratic form $q(n,m)$ on a superbase vectors $\left\{\vect{a}{b},\vect{c}{d},-\vect{a+c}{b+d}\right\}$, the notation that we use for the topograph's labels on regions.

\begin{theorem}[\cite{hurwitz1905darstellung}] 
$$\sum\limits_{\substack{a,b,c,d\in\ZZ_{\geq 0}\\ ad-bc=1}} \frac{1}{(a^2+b^2)(c^2+d^2)((a+c)^2+(b+d)^2)} = \frac{\pi}{4}.$$
\end{theorem}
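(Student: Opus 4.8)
The statement is, up to the relabelling $x=\vect{a}{b}$, $y=\vect{c}{d}$, exactly Theorem~\ref{th_one}; the reason to record it again here is to give the purely geometric argument foreshadowed by \eqref{eq_3}. The plan is to read every summand $\tfrac1{rst}$ as (half) the area of an inscribed triangle and then to observe that these triangles tessellate a half-disk of area $\tfrac\pi2$. Concretely: by \eqref{eq_3} one has $\operatorname{Area}T(x,y)=\tfrac{2}{rst}$, where $T(x,y)$ is the triangle with vertices $f(x),f(y),f(x+y)$ and $r,s,t$ are the values of $q(v)=\|v\|^2$ on the superbase $\{x,y,-(x+y)\}$; hence $\sum_{A}\tfrac1{rst}=\tfrac12\sum_{A}\operatorname{Area}T(x,y)$, and it suffices to evaluate the right-hand side geometrically.

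First I would set up the dictionary provided by $f$. Since $f$ doubles the polar angle, it maps the primitive vectors of $\ZZ_{\ge0}^{2}$ injectively onto a dense subset of the closed right semicircle $S^{+}=\{u^{2}+v^{2}=1,\ u\ge0\}$, running from $f(\vect{1}{0})=(0,1)$ through $f(\vect{1}{1})=(1,0)$ to $f(\vect{0}{1})=(0,-1)$; moreover $\det(x,y)=\pm1$ holds exactly for pairs whose images are Farey neighbours on $S^{+}$, and then $f(x+y)$ lies on the arc between $f(x)$ and $f(y)$. It follows that the triangles $T(x,y)$, one for each $(x,y)\in A$, have pairwise disjoint interiors and successively refine the half-disk $\Delta^{+}=\{u^{2}+v^{2}\le1,\ u\ge0\}$: the coarsest, $T(\vect{1}{0},\vect{0}{1})$, has vertices $(0,1),(1,0),(0,-1)$, and passing to a mediant $x+y$ bisects one of the still-free chords. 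Thus $\bigcup_{A}T(x,y)$ equals $\Delta^{+}$ up to the measure-zero union of chords, and modulo a convergence check one gets $\sum_{A}\operatorname{Area}T(x,y)=\operatorname{Area}\Delta^{+}=\tfrac\pi2$, i.e. $\sum_{A}\tfrac1{rst}=\tfrac\pi4$.

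For the convergence check I would reuse the exhaustion from the proof of Theorem~\ref{th_one}, summing over the finitely many $(x,y)\in A$ with entries $\le n$. Writing $\operatorname{Area}T(x,y)$ as the signed sum of the three central triangles $\operatorname{Area}\triangle\bigl(O,f(\cdot),f(\cdot)\bigr)$ — which is precisely the telescoping identity \eqref{eq_1} rescaled, with $F(x,y)=2\operatorname{Area}\triangle(O,f(x),f(y))$ — all interior contributions cancel, leaving the vanishing root term $\operatorname{Area}\triangle(O,(0,1),(0,-1))=0$ together with the central triangles over the boundary chords, whose union is the inscribed polygon $P_{n}$. As $n\to\infty$ the Farey points become dense on $S^{+}$, the chord lengths $\ell_{i}$ tend to $0$ uniformly, the total area of the leftover circular segments is $O\!\bigl(\sum_{i}\ell_{i}^{3}\bigr)=O\!\bigl(\max_{i}\ell_{i}\cdot\sum_{i}\ell_{i}\bigr)=O(\max_{i}\ell_{i})\to0$, so $\operatorname{Area}P_{n}\to\operatorname{Area}\Delta^{+}=\tfrac\pi2$ and the claim follows. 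The one delicate point is exactly this last step — controlling the leftover segments along the exhaustion — and it is the same estimate already made for Theorem~\ref{th_one} (there phrased through $F=\sin2\alpha\approx2\alpha$ with $\sum\alpha_{i}=\tfrac\pi2$), so one may simply repeat it or invoke Theorem~\ref{th_one}. The genuinely new ingredient is the identity $\tfrac1{rst}=\tfrac12\operatorname{Area}T(x,y)$ coming from \eqref{eq_3}: it reproves the Hurwitz formula and is the ``inside the circle'' half of the duality, the ``outside the circle'' counterpart (replace $f(x),f(y),f(x+y)$ by the triangle of tangent lines to $S^{+}$ at those points) being what serves the companion $\tfrac1{|efg|}$ identity.
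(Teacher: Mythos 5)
Your proposal is correct and is essentially the paper's own proof: the paper likewise invokes \eqref{eq_3} to identify each summand $\tfrac{1}{rst}$ with half the area of the inscribed triangle $f(\vect{a}{b}),f(\vect{c}{d}),f(\vect{a+c}{b+d})$ and concludes by observing that these triangles tile the right half of the unit disc, of area $\pi/2$. The extra detail you supply (the angle-doubling description of $f$, the Stern--Brocot refinement of the tiling, and the $O(\max_i\ell_i)$ bound on the leftover circular segments) merely fleshes out the paper's terse ``tile it completely.''
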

\begin{proof}
The area of the right half of the unit disc is $\pi/2$. The triangles with vertices $$f(\vect{a}{b}),f(\vect{c}{d}),f(\vect{a+c}{b+d}), a,b,c,d\geq 0, ad-bc=1$$ tile it completely. Thus we divide \eqref{eq_3} by two and sum over all tuples $(a,b,c,d)$ with $a,b,c,d\geq 0, ad-bc-1$.
\end{proof} 

This reasoning goes back to the original article of A. Hurwitz \cite{hurwitz1905darstellung}. Hurwitz's proof applies for any positive-definite binary quadratic form $q$ (the above case corresponds to $q(v)=\|v\|^2, v\in\ZZ^2$) and consists of using a rational parametrization of a quadric curve to partition its interior into triangles corresponding to consecutive Farey fractions $(\frac{a}{b},\frac{a+c}{b+d}, \frac{c}{d})$, and then verifying that the areas of these triangles are proportional to $$(q(a,b) q(c,d) q(a+c,b+d))^{-2}.$$

Alternatively, consider the tangent lines $l_{a,b}$ to the unit circle at points $f(\vect{a}{b})$. The area of the triangle formed by these lines $l_{a,b},l_{c,d},l_{a+c,b+d}$ is
$$\frac{|ad-bc|^3}{(ac+bd)(a(a+c)+b(b+d))((a+c)c + (b+d)d)} =\frac{1}{efg}.$$

Note that the dot products in the denominators are exactly $e,f,g$ on the edges in the topograph. Since these triangles tile the region between the unit circle and the tangents at $(0,1)$ and $(1,0)$, we can evaluate the sum $\sum\frac{1}{efg}$, namely:
\begin{theorem}
$$\sum_{\substack{a \geq b \\ c \geq d \\ ad - bc = 1}}\frac{1}{(ac+bd)(a(a+c)+b(b+d))((a+c)c + (b+d)d)}=1-\pi/4. $$
\end{theorem}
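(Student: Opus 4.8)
The statement asserts that the tangent-line triangles tile the region between the unit circle and the two tangent lines at $(0,1)$ and $(1,0)$, and that the sum of their areas equals $1-\pi/4$. The strategy mirrors the proof of the preceding region-label theorem, but in the dual ("outside the circle") picture. First I would verify the area formula already quoted: by a direct determinant computation, the triangle cut out by the three tangent lines $l_{a,b}, l_{c,d}, l_{a+c,b+d}$ has area $\tfrac{|ad-bc|^3}{(ac+bd)(a(a+c)+b(b+d))((a+c)c+(b+d)d)}$, and since $ad-bc=1$ on the index set and the three dot products in the denominator are precisely the edge labels $e,f,g$ of the topograph for $q(v)=\|v\|^2$ at the corresponding vertex, this area equals $\tfrac{1}{efg}$. (One should check positivity/orientation so that $efg>0$ on the relevant half; this follows since all of $a,b,c,d\ge 0$ with $a\ge b$, $c\ge d$ forces each dot product to be positive.)

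Second, the combinatorial-geometric core: the Farey mediant structure on $\{(a,b):a\ge b\ge 0,\gcd=1\}$ organizes these tangent-line triangles so that consecutive Farey neighbors $\tfrac ab,\tfrac{a+c}{b+d},\tfrac cd$ give triangles that share edges and are pairwise non-overlapping, and their union exhausts exactly the region $R$ bounded by the arc of the unit circle from $(0,1)$ to $(1,\allowbreak 0)$ (the first-quadrant arc) and the two tangent segments meeting at the corner point $(1,1)$. The tangent at $(0,1)$ is $y=1$, the tangent at $(1,0)$ is $x=1$, they meet at $(1,1)$, and the corner triangle with vertices $(0,1),(1,0),(1,1)$ minus the circular sector is this region $R$. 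Its area is the area of that triangle, $\tfrac12$, minus... no: more directly, $\mathrm{area}(R) = \mathrm{area}(\text{square-corner region above the chord}) $; cleanest is to compute $\mathrm{area}(R)$ as the area of the triangle $(0,1),(1,1),(1,0)$, which is $\tfrac12$, minus the area between the chord from $(0,1)$ to $(1,0)$ and the arc, which is the circular segment of area $\tfrac{\pi}{4}-\tfrac12$. Hence $\mathrm{area}(R)=\tfrac12-(\tfrac\pi4-\tfrac12)=1-\tfrac\pi4$, matching the claim. Summing $\tfrac{1}{efg}$ over all the tiling triangles therefore gives $1-\pi/4$.

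Third, to make "the triangles tile $R$" rigorous and to justify interchanging the infinite sum with the area, I would set up the same exhaustion as in Section~\ref{sec_ideas}: take $\mathcal T''_N$, the depth-$N$ truncation of the relevant upper half, whose tangent-line triangles tile a subregion $R_N\subset R$ bounded below by the chords along the crown $C_N$; then $\mathrm{area}(R\setminus R_N)\to 0$ as $N\to\infty$ because $R\setminus R_N$ is contained in the union of the thin circular segments cut off by the crown chords, whose total area is $O\!\big(\sum_{E\in C_N}|e/(rt)|\big)$ — equivalently controlled by the angular widths $\sum \alpha_i$ telescoping correctly — and the individual crown angles tend to $0$. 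Alternatively, and more in the spirit of the paper, one can obtain the result as a corollary of the already-proved edge-label formula \eqref{top2}: applying \eqref{top2} to $q(v)=\|v\|^2$ ($D=-4$) with root $E=\{\pm\vect{1}{0},\pm\vect{0}{1}\}$, where $e_0=r_0+t_0-s_0$; here $r_0=t_0=1$ and $s_0=q(\vect{1}{1})=2$, so $e_0=0$, which makes the naive $1/e_0$ term singular, so instead one should take the root along an edge where $e_0\ne 0$ (e.g. the edge between regions $1$ and $2$, with $e_0=\pm1$) and add the two symmetric halves, recovering $\tfrac{1}{D}\big(\tfrac{\arctan\sqrt{-D}}{\sqrt{-D}}\cdot 2 - \text{correction}\big)$; with $D=-4$ this evaluates to $-\tfrac14(2\cdot\tfrac{\arctan 2}{2}-\cdots)$, and combined with the known total $\sum_{\mathcal T}\tfrac1{|efg|}$ one extracts $1-\pi/4$. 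I would present the self-contained geometric tiling argument as the main proof since it is cleanest.

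\textbf{Main obstacle.} The delicate point is not the area formula (routine algebra) but establishing rigorously that the tangent-line triangles \emph{tile} $R$ with no overlaps and no gaps, together with the limit interchange. One must check: (i) adjacent Farey mediants produce triangles sharing exactly a common edge (a segment of a common tangent line), (ii) the triangles stay inside $R$ — i.e. the tangent line $l_{a+c,b+d}$ separates the mediant triangle from the rest of $R$ appropriately, which uses convexity of the circle and the fact that $f(\vect{a+c}{b+d})$ lies on the arc between $f(\vect ab)$ and $f(\vect cd)$ — and (iii) the nested regions $R_N$ exhaust $R$, for which the key estimate is that the crown chords cut off circular segments of total area $\to 0$; this is exactly where the smallness conditions $\max_{E\in C_N}|e/(rt)|\to 0$ and $\sum_{E\in C_N}|e/(rt)|^3\to 0$ from Section~\ref{sec_ideas} enter (the first controls individual segment areas, and the angular widths sum to the fixed total $\arctan$-value so no mass escapes). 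Handling the orientation/sign bookkeeping so that every triangle contributes $+\tfrac1{efg}$ rather than $\pm\tfrac1{efg}$ is the other place care is needed.
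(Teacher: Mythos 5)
Your main argument --- computing the area of each tangent-line triangle as $1/(efg)$ and summing via the tiling of the region between the first-quadrant arc and the tangents at $(0,1)$ and $(1,0)$, whose area is $1-\pi/4$ --- is exactly the proof the paper gives (the paper states the tiling claim in one line; you correctly identify it, together with the exhaustion/limit interchange, as the point needing care). The only slip is in your secondary aside: the index set $a\ge b$, $c\ge d$, $ad-bc=1$ roots the half-topograph at the edge between the regions labelled $1$ and $2$ (the pair $(\tfrac10,\tfrac11)$), where $e_0=2\ne 0$, so \eqref{top2} applies directly with $\arctan(\sqrt{-D}/e_0)=\arctan 1=\pi/4$ and yields $(4-\pi)/32=\tfrac18(1-\pi/4)$ for $\sum 1/|efg|$ (the factor $8$ accounting for $e=2\,x\cdot y$), with no need to split into symmetric halves; since you present the geometric tiling as the main proof, this does not affect correctness.
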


\begin{remark}
This identity may be generalized by deforming the lattice as in Section~\ref{sec_mt}.  Consider the vectors $a\vect{1}{0}+b\vect{1}{\mu}$ and draw the tangent lines at points $f(\vect{a}{\mu b})$. Then the following sum is equal to the area of a part between of the unit circle and  tangents to it at $(0,1)$ and $\frac{2\mu}{\mu^2+1}, \frac{\mu^2-1}{\mu^2+1}$, namely

$$\sum_{\substack{a \geq b \\ c \geq d \\ ad - bc = 1}}\frac{\mu^3}{(ac+\mu^2bd)(a(a+c)+\mu^2b(b+d))((a+c)c + \mu^2(b+d)d)}=\mu-\arctan\mu.$$

Dividing by $\mu^3$ and substituting $\mu=0$ we recover  \eqref{eq_mordell} once again:

$$\sum_{\gcd(a,c)=1}\frac{1}{(ac)(a(a+c))((a+c)c)}=\frac{1}{3}.$$

Expanding $\arctan$ into its Taylor series and considering the next coefficient, we obtain

$$\sum_{\substack{a \geq b \\ c \geq d \\ ad - bc = 1}}\frac{1}{a^2c^2(a+c)^2}\left(\frac{bd}{ac}+\frac{b(b+d)}{a(a+c)}+\frac{(b+d)d}{(a+c)c}\right)=\frac{1}{5}.$$

We may also plug $\mu=i/2$, getting

$$\sum_{\substack{a \geq b \\ c \geq d \\ ad - bc = 1}}\frac{1}{(4ac-bd)(4a(a+c)-b(b+d))(4(a+c)c-(b+d)d)}=$$
$$=\frac{1}{64(i/2)^3}(i/2-\arctan{i/2})=\frac{\ln 3-1}{16}.$$

\end{remark}

\begin{proof}[Proof of Theorem~\ref{thm_5}, \eqref{top2}] 
As in Section~\ref{sec_proof1}, apply an $SL(2,\RR)$ transformation to bring the positive definite quadratic form to $\mu(m^2+n^2)$, and then use the geometric arguments above.  Thus $\sum\frac{1}{efg}$ is proportional to the area between the unit circle and  two tangents to it.

The condition $e_0>0$ guaranties that the tangents to the unit circle intersect and that the sum is finite. The arctan term is proportional to the area of the sector, and $\frac{1}{e_0}$ is proportional to the area of the triangle, which we subtract to get the desired area.

\end{proof}

We also can compare the above proof with a direct approach. We can telescope $\frac{s}{fg}$ and use the telescoping identity \eqref{eq_stu2}.

Denote  by $f = f_0, f_1,\dots $ the edges of the region $s$ on the left and by $g=g_0, g_1, \dots$ those on the right. Note that $f_{k+1}=f_{k}+2s,g_{k+1}=g_{k}+2s$ and 
$$\sum_{k=0}^\infty\frac{s}{f_kf_{k+1}}=\frac{s}{2s}\sum_{k=0}^\infty (\frac{1}{f_k}-\frac{1}{f_{k+1}})=\frac{1}{2f_0}.$$

Therefore in the sum of $\frac{s}{fg}$  the term $\frac{1}{2e_0}$ appears twice, all intermediate terms cancel because $\frac{1}{2f_0}+\frac{1}{2g_0}-\frac{s}{f_0g_0}=0$ for $s=\frac{f_0+g_0}{2}$. The sum of the terms at the crown is $-\sum\frac{1}{e}$ over all $e$ in the crown. Then we use the approximation $$\frac{\sqrt{-D}}{e}\sim \arctan(\frac{\sqrt{-D}}{e})$$ at the edges of the crown and  Lemma ~\ref{lemma_main2}

$$\arctan(\frac{\sqrt{-D}}{e})=\arctan(\frac{\sqrt{-D}}{f})+\arctan(\frac{\sqrt{-D}}{g}).$$
However, the direct estimates of the quality of approximations and the right choice of the branch or arctan is not transparent, thereby we presented the above geometric proof.
\section{Indefinite case, $D>0$}

In Section~\ref{sec_hata} we derive Hata's series for the Euler constant, using a form $q(n,m)=nm$.

\subsection{Hata's series}
\label{sec_hata}
We now examine a series for Euler's constant $\gamma$, due to Masayoshi Hata.

\begin{definition} Let $\mathcal{F}$ denote the set of ordered pairs of reduced fractions $\left(\frac{a}{b}, \frac{c}{d}\right)$ with
$0\leq \frac{a}{b} < \frac{c}{d}\leq 1$,
$ad - bc= -1$.
Thus $\mathcal{F}$ is the set of pairs of consecutive Farey fractions. Let $\mathcal{F}^* = \left\{\left(\frac{a}{b},\frac{c}{d}\right) = (\frac{0}{1},\frac{1}{n}): n \in \mathbb{N}\right\}$. 
\end{definition}
Masayoshi Hata proved the following theorem.
\begin{theorem}[\cite{hata1995farey}]
\label{thm}
In the above notation
\[
\gamma = \frac{1}{2} + \frac{1}{2} \sum_{\left(\frac{a}{b}, \frac{c}{d}\right) \in \mathcal{F} \setminus \mathcal{F}^*} \frac{1}{abcd(a+c)(b+d)}.
\]
\end{theorem}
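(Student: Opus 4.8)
The plan is to realize Hata's series as a telescoping-over-topograph sum for the degenerate quadratic form $q(m,n)=mn$, which has discriminant $D=1>0$, and then invoke Theorem~\ref{thm_6} (the indefinite case, edge-label version \eqref{top4}) or carry out the telescoping directly. First I would identify the relevant topograph: for $q(m,n)=mn$ the regions carry the values $q(a,b)=ab$ on the primitive vector $(a,b)$, so the product $|efg|$ of the edge labels at a vertex with adjacent regions $(a,b),(c,d),(a+c,b+d)$ is, up to sign and the factor $|ad-bc|=1$, exactly $(ac+\text{cross terms})\cdots$; a direct computation of $e,f,g=r+t-s$ etc. for this form should produce the denominator $abcd(a+c)(b+d)$. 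I would pick the root $E$ to be the edge adjacent to the regions with values $q(0,1)=0$ and $q(1,0)=0$... — but note these vanish, so instead I would root at the edge corresponding to $\{\pm(0,1),\pm(1,1)\}$ or more naturally work with the form after a change of basis so that the admissibility hypothesis ($rstefg\ne 0$) holds on the upper half; the set $\mathcal F\setminus\mathcal F^*$ is precisely the set of Farey pairs that appear as crown/interior data of the admissible half once the ``trivial'' Stern–Brocot branch corresponding to $\mathcal F^*$ (the fractions $0/1,1/n$) is excised, which is why $\mathcal F^*$ is subtracted.

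Next I would set up the telescoping directly, mimicking the proof of Theorem~\ref{th_one} and the Mordell–Tornheim computation in Section~\ref{sec_mt}: choose the analogue of the function $F$ adapted to $q(m,n)=mn$, sum the vertex identity \eqref{eq_stu2} (or the $D>0$ edge identity) over an exhaustion $\mathcal T''_N$, let the interior terms cancel, and collect the surviving root term plus crown terms. For $D>0$ the relevant first-order approximation is $\frac{\sqrt D}{e}\sim\arctanh\frac{\sqrt D}{e}$ together with \eqref{eq_arctanh}/Lemma~\ref{lemma_main2}; with $D=1$ this becomes $\frac1e\sim\arctanh\frac1e$, and the crown sum telescopes to an $\arctanh$ of the root data. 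Actually, since here the river is degenerate (the form $mn$ represents $0$), I expect the cleaner route is to bypass Theorem~\ref{thm_6} and do the $D=1$ telescoping by hand: writing $\frac1{abcd(a+c)(b+d)}$ as a difference of simpler two-variable terms via the identity $\frac1{xy}=\frac1{y-x}(\frac1x-\frac1y)$ applied along each region, so that summing over a region with parameter (say) fixed $a/b$ produces a partial-fraction collapse, exactly as in the $\sum_k \frac{s}{f_kf_{k+1}}=\frac1{2f_0}$ computation at the end of Section~\ref{sec_duality}.

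The remaining ingredient is to pin down the constant: one must show the surviving root/boundary contribution equals $2\gamma-1$. This is where the harmonic-number asymptotics enter — the partial sums over $\mathcal F\setminus\mathcal F^*$ truncated at denominator $\le N$ should, after the telescoping collapse, reduce to $\sum_{k\le N}\frac1k-\log N$ plus vanishing remainder, whose limit is $\gamma$; the excision of $\mathcal F^*$ is precisely what isolates the harmonic series $\sum 1/n$ coming from the fractions $1/n$. So I would: (1) express the finite truncated sum as a telescoping sum via the partial-fraction identity, (2) evaluate the telescoped form explicitly in terms of a harmonic-number-like quantity attached to the crown at ``depth $N$'', (3) identify the constant root term, and (4) take $N\to\infty$ using $H_N-\log N\to\gamma$ and the smallness of the crown remainder (which here needs an estimate on how the Farey denominators grow, analogous to the third-power smallness conditions in Section~\ref{sec_ideas}).

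\textbf{Main obstacle.} The hard part will be the bookkeeping of the boundary: unlike the clean geometric picture for $D<0$ (angles summing to $\pi/2$), here the form is degenerate, the ``river'' is non-generic, and one must carefully argue that the admissible upper half relative to a suitable root reproduces exactly $\mathcal F\setminus\mathcal F^*$ with each pair counted once, and that the crown contribution at depth $N$ converges — after subtracting the divergent $\log N$ — to the right value. In other words, matching the combinatorial indexing of Hata's sum to the topograph's vertex set, and controlling the limit of a sum that is only conditionally convergent in the naive ordering, is the delicate step; the algebraic identity $F(x,y)-F(x{+}y,y)-F(x,x{+}y)=\pm\frac{\det(x,y)^2}{\cdots}$ for the appropriate $F$ is, by contrast, a routine verification.
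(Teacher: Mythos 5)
Your overall strategy is the paper's: take the form $q(m,n)=mn$, identify Hata's summand with a topograph vertex term, decompose $\mathcal F\setminus\mathcal F^*$ into the subtrees hanging off the ray of pairs $(\tfrac01,\tfrac1n)$, telescope each subtree, and recover $\gamma$ from $H_N-\log N$. Two things need fixing, one of which is the step you yourself flag as the main obstacle and leave unresolved. First, a concrete error: the denominator $abcd(a+c)(b+d)$ is $q(a,b)\,q(c,d)\,q(a+c,b+d)=rst$, a product of \emph{region} labels, not of edge labels. For $q(m,n)=mn$ the edge labels are of the form $ad+bc$ (e.g. $e=q(x+y)-q(x)-q(y)=ad+bc$), so $|efg|$ is a completely different quantity. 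Consequently the relevant vertex identity is \eqref{eq_stu} (telescoping $\frac{e}{rt}$), not \eqref{eq_stu2} or the $\arctanh$/edge-label formula \eqref{top4}, and the relevant transcendental additivity is the $\arcsinh$ identity \eqref{eq_arcsinh}, not $\arctanh$. Your quantity to telescope is $F\bigl(\vect{a}{b},\vect{c}{d}\bigr)=\frac{ad+bc}{abcd}=\frac{e}{rt}$.

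Second, the missing idea. The paper evaluates the crown limit not by ad hoc extraction of $\log N$ but via the exact identity (valid when $ad-bc=-1$, $a,b,c,d\ge 0$)
\[
\arcsinh\!\left(\frac{ad+bc}{2abcd}\right)=\log\frac{bc}{ad},
\]
proved by completing the square using $(ad+bc)^2=4abcd+1$. Combined with the additivity of $\arcsinh$ along the tree (Lemma~\ref{lemma_main2}) and the approximation $x=2\arcsinh(x/2)+O(x^3)$ on the crown, this shows the subtree rooted at the pair $\bigl(\tfrac{1}{n+1},\tfrac{1}{n}\bigr)$ contributes exactly $\frac{2n+1}{n(n+1)}-2\log\frac{n+1}{n}$; summing over $n\le N$ gives $2H_N-1-2\log(N+1)+o(1)\to 2\gamma-1$, which is the theorem. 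Without this lemma your plan in step (2)--(4) --- ``evaluate the telescoped form in terms of a harmonic-number-like quantity at depth $N$'' and control a conditionally convergent boundary --- is precisely the gap: you have no closed form for the crown limit and no mechanism producing the $\log$ that must cancel against the harmonic sum. The partial-fraction collapse $\frac{1}{xy}=\frac{1}{y-x}\bigl(\frac1x-\frac1y\bigr)$ you propose as an alternative computes $\sum\frac{s}{f_kf_{k+1}}$-type edge sums (as at the end of Section~\ref{sec_duality}) and does not obviously reorganize $\sum\frac{1}{rst}$; if you pursue that route you would need to supply the reorganization explicitly.
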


Hata's proof proceeds by analysing  presentations of functions in a Schauder basis associated with pairs of consecutive Farey fractions. Using a Parseval-type identity for the function $\psi(x)=x\{\frac{1}{x}\}(1-\{\frac{1}{x}\})$, he thereby obtains the above theorem.

Note that $$\frac{1}{abcd(a+c)(b+d)}=\frac{1}{q(x)q(y)q(x+y)}=\frac{1}{rst}$$ for the quadratic form $q(v)= mn$, for  $v=(m,n)$. Thus, this identity may also be derived via our telescopic method.

\begin{lemma}
For $a,b,c,d\geq 0$ with $ad-bc=-1$ one has
$$\arcsinh\left(\frac{ad+bc}{2abcd}\right)=\log\left(\frac{bc}{ad}\right).$$
\end{lemma}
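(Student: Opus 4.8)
The statement to prove is the identity
\[
\arcsinh\!\left(\frac{ad+bc}{2abcd}\right)=\log\!\left(\frac{bc}{ad}\right)
\]
for nonnegative integers $a,b,c,d$ with $ad-bc=-1$.

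The plan is to verify it by the standard logarithmic formula $\arcsinh(u)=\log\!\bigl(u+\sqrt{u^2+1}\bigr)$ and show that the argument on the right collapses to $bc/(ad)$. First I would set $X=ad$ and $Y=bc$, so that the hypothesis reads $X-Y=-1$, i.e.\ $Y=X+1$, and the argument of $\arcsinh$ becomes $u=\frac{X+Y}{2XY}$. The key computation is then $u^2+1=\frac{(X+Y)^2+4X^2Y^2}{4X^2Y^2}$, and I want to recognize the numerator as a perfect square. Using $Y-X=1$ one has $(X+Y)^2+4X^2Y^2=(X+Y)^2+(2XY)^2$; and since $(2XY+1)^2=4X^2Y^2+4XY+1$ while $(X+Y)^2=(Y-X)^2+4XY=1+4XY$, we get $(X+Y)^2+(2XY)^2=4X^2Y^2+4XY+1=(2XY+1)^2$. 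Hence $\sqrt{u^2+1}=\frac{2XY+1}{2XY}$ (all quantities positive, so the positive square root is the right one), and
\[
u+\sqrt{u^2+1}=\frac{X+Y+2XY+1}{2XY}=\frac{(X+1)(Y+1)}{2XY}=\frac{Y\cdot(Y+1)}{2XY}=\frac{Y+1}{2X},
\]
using $X+1=Y$; but also $Y+1=X+2$, so this does not yet look like $Y/X$. Let me instead factor differently: $X+Y+2XY+1=(X+1)+(Y+2XY)=(X+1)+Y(1+2X)$. Since $X+1=Y$ this is $Y+Y(1+2X)=Y(2+2X)=2Y(X+1)=2Y\cdot Y=2Y^2$, giving $u+\sqrt{u^2+1}=\frac{2Y^2}{2XY}=\frac{Y}{X}=\frac{bc}{ad}$, as desired.

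So the proof is a one-line substitution once the algebra is arranged correctly: replace $ad-bc=-1$ by $bc=ad+1$, compute $u^2+1$ and check it is the square of $\frac{2\,ad\,bc+1}{2\,ad\,bc}$, and then simplify $u+\sqrt{u^2+1}$ using $bc=ad+1$ to obtain exactly $bc/ad$. The main thing to be careful about — the only real "obstacle" — is the sign/branch issue: one must confirm that the relevant quantities $ad$, $bc$ are strictly positive (so the denominators are nonzero and the principal square root is the correct one), which holds because in the intended application $a,b,c,d$ are positive with $ad-bc=-1$ forcing all four to be nonzero; then $\arcsinh$ is the genuine real inverse hyperbolic sine and the logarithmic identity applies without ambiguity. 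No exhaustion or limiting argument is needed here; this lemma is purely a trigonometric-style identity feeding into the telescoping computation for Hata's series.
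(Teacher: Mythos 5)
Your proof is correct and is essentially the same as the paper's: both apply $\arcsinh(u)=\log\bigl(u+\sqrt{u^2+1}\bigr)$, recognize $1+u^2$ as the square of $\frac{2abcd+1}{2abcd}$ using $bc=ad+1$, and simplify the resulting argument of the logarithm to $bc/(ad)$. The only blemish is the intermediate line $X+Y+2XY+1=(X+1)(Y+1)$, which is false as an identity (the right side is $XY+X+Y+1$); since you immediately discard it and redo the factorization correctly as $2Y^2$, the final argument stands, but that line should be deleted.
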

\begin{proof}

By a direct check, we see that 
$$1+\left(\frac{(ad+bc)}{2abcd}\right)^2 = \left(\frac{(ad+bc)^2+1}{4abcd}\right)^2,$$

then,

$$\arcsinh\left(\frac{ad+bc}{2abcd}\right) = \log\left(\frac{(ad+bc)}{2abcd}+\sqrt{1+\left(\frac{(ad+bc)}{2abcd}\right)^2}\right)=$$

$$\log\left(\frac{(ad+bc)^2+2(ad+bc)+1}{4abcd}\right)=\log\left(\frac{(bc)^2}{abcd}\right)=\log\left(\frac{bc}{ad}\right).$$

\end{proof}

Thus, we we obtain the relation 

$$\arcsinh\left(\frac{ad+bc}{2abcd}\right) = \arcsinh\left(\frac{a(b+d)+b(a+c}{2ab(a+c)(b+d)}\right)+\arcsinh\left(\frac{(a+c)d+(b+d)c}{2(a+c)(b+d)cd}\right).$$

In order to find $\sum\frac{1}{rst}$ one takes the telescopic sum of $\frac{e}{rt}-\frac{g}{st}-\frac{h}{rs}$ which, in our case, becomes (since $e = q(x+y)-q(x)-q(y)$)

$$F(\vect{a}{b},\vect{c}{d})=\frac{ad+bc}{abcd}\sim 2\arcsinh\left(\frac{ad+bc}{2abcd}\right),$$
if it is small.

Therefore

$$\sum_{\left(\frac{a}{b}, \frac{c}{d}\right) \in \mathcal{F} \setminus \mathcal{F}^*} \frac{1}{abcd(a+c)(b+d)} =\sum_{n=1}^\infty (F(\vect{1}{n},\vect{1}{n+1})-2\arcsinh\left(\frac{1\cdot(n+1)+n\cdot 1}{2n(n+1)}\right))=$$

$$=2\lim_{n\to\infty} (1+\frac{1}{2}+\frac{1}{3}+\dots+\frac{1}{n}-\log n)-1 = 2\gamma-1.$$

Using the topograph relation \eqref{eq_stu3} and the same strategy of proof we obtain

\begin{theorem}
In the above notation
\[
\sum_{\left(\frac{a}{b}, \frac{c}{d}\right) \in \mathcal{F} \setminus \mathcal{F}^*} \frac{ab+cd+(a+c)(b+d)}{(abcd(a+c)(b+d))^2} = 7-12\gamma.
\]
\end{theorem}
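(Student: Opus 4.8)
The plan is to mirror exactly the telescoping argument that produced Hata's series for $\gamma$, but replacing the first-order quantity $\frac{e}{rt}$ by the refined telescoping identity \eqref{eq_stu3}. Recall that for the quadratic form $q(m,n)=mn$ the region labels at the vertex corresponding to the superbase $\bigl\{\vect{a}{b},\vect{c}{d},-\vect{a+c}{b+d}\bigr\}$ are $r=ab$, $t=cd$, $s=(a+c)(b+d)$, so that $\frac{1}{rst}=\frac{1}{abcd(a+c)(b+d)}$ and the left-hand side of the claimed identity is exactly $\sum \bigl(\frac{g}{s^2t^2}+\frac{f}{r^2s^2}+\frac{e}{r^2t^2}\bigr)$-type data rearranged via \eqref{eq_stu3}. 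Concretely, \eqref{eq_stu3} reads $\frac{g}{s^2t^2}+\frac{f}{r^2s^2}+\frac{e}{r^2t^2}=-\frac{6}{rst}-\frac{D(r+s+t)}{r^2s^2t^2}$, and here $D=\frac14$ for $q(m,n)=mn$ (since $D=-ef-fg-eg$ and one checks $4D=1$ for this normalization), so summing \eqref{eq_stu3} over the admissible upper half $\mathcal T'$ rooted at $E^*=\bigl\{\vect{0}{1},\vect{1}{1}\bigr\}$ — whose crown exhausts the Farey pairs $(\frac01,\frac1n)$ just as in the $\gamma$-computation — telescopes the left-hand side and leaves a root contribution plus $-6\sum\frac{1}{rst}$ plus $\sum\frac{D(r+s+t)}{r^2s^2t^2}$.

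First I would make the telescoping precise: the quantity being telescoped is $G(x,y)$, the analogue of $F$ for the weight-two sum, chosen so that $G(x,y)-G(x+y,y)-G(x,x+y)$ equals a fixed multiple of $\frac{e}{r^2t^2}$ at each vertex (up to the $\frac{1}{rst}$ and $\frac{(r+s+t)}{r^2s^2t^2}$ corrections coming from \eqref{eq_stu3}); for $q(m,n)=mn$ one has $e=q(x+y)-q(x)-q(y)=ad+bc$, $r=ab$, $t=cd$, so $\frac{e}{r^2t^2}=\frac{ad+bc}{(abcd)^2}$ and the natural primitive is $G(x,y)=\tfrac{1}{3}\bigl(\tfrac{1}{(ab)^3}+\tfrac{1}{(cd)^3}\bigr)$-style, matching \eqref{eq_stu5}. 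Summing over the finite subtrees $\mathcal T''_N$ of the admissible half and letting $N\to\infty$: the interior terms cancel pairwise, the root term is the explicit value of $G$ on $E^*$, and the crown terms converge. On the crown one is left with $\sum_{n\ge1}$ of an explicit rational function of $n$ minus the $G$-value telescoping into a harmonic-type tail, producing a combination of $\gamma$, $\zeta(2)$ and elementary constants; the already-established $\sum\frac{1}{rst}=2\gamma-1$ and $\sum\frac{1}{rst}$-type subresults feed in to eliminate the $-6\sum\frac{1}{rst}$ term. Collecting everything should yield the stated value $7-12\gamma$.

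In slightly more detail on the bookkeeping: write the target as $S_2:=\sum\frac{ab+cd+(a+c)(b+d)}{(abcd(a+c)(b+d))^2}=\sum\frac{r+s+t}{r^2s^2t^2}$. From \eqref{eq_stu3}, $\frac{D(r+s+t)}{r^2s^2t^2}=-\frac{6}{rst}-\bigl(\frac{g}{s^2t^2}+\frac{f}{r^2s^2}+\frac{e}{r^2t^2}\bigr)$, hence $\tfrac14 S_2=-6S_1-T$ where $S_1=\sum\frac1{rst}=2\gamma-1$ (Hata, established above) and $T=\sum\bigl(\frac{g}{s^2t^2}+\frac{f}{r^2s^2}+\frac{e}{r^2t^2}\bigr)$ is the telescoping sum. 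So $S_2=-24S_1-4T=-24(2\gamma-1)-4T=24-48\gamma-4T$; matching this to $7-12\gamma$ forces $4T=17-36\gamma$, i.e. $T=\tfrac14(17-36\gamma)$, which is what the explicit crown evaluation must deliver. I would therefore compute $T$ directly by telescoping: $T=-G(\vect{0}{1},\vect{1}{1})+\lim_{N\to\infty}\sum_{E\in C_N}\bigl(G(E_u,E_{u+v})+G(E_v,E_{u+v})\bigr)$ with the crown $C_N$ running over $(\frac01,\frac1n)$, $n\le N$; each crown term is an explicit function of $n$, $G$ has a closed form, and the sum telescopes to $\gamma$, $\zeta(2)$ and rationals. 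The arithmetic is elementary but the sign conventions (orientations of $e,f,g$ as in Figure~\ref{apfig}) and the exact constant $D=\tfrac14$ for $q(m,n)=mn$ need care.

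The main obstacle I anticipate is not conceptual but the bookkeeping of constants: pinning down the precise normalization $D=\tfrac14$ for the form $mn$ (versus the $D=-4$ of $\|v\|^2$), the correct multiple relating $G(x,y)-G(x+y,y)-G(x,x+y)$ to \eqref{eq_stu3}, and the branch/orientation of the edge labels so that the crown tail collapses to exactly $\gamma$ rather than $\gamma+\text{const}$. A secondary subtlety is justifying the interchange of limit and sum on the crown — i.e. verifying the smallness conditions $\max_{E\in C_N}|\tfrac{e}{r^2t^2}|\to0$ and absolute convergence of the weight-two sum, which holds since the summand decays like $n^{-6}$ along the crown but should be stated cleanly, exactly as the smallness conditions were invoked for Theorem~\ref{thm_5}.
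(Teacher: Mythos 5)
Your skeleton is the paper's: the paper proves this theorem in one line by invoking \eqref{eq_stu3} and ``the same strategy'' as for Hata's series, and your decomposition $S_2=\sum\frac{r+s+t}{r^2s^2t^2}$, $D\,S_2=-6S_1-T$ with $T$ the telescoped sum of $\frac{e}{r^2t^2}$-terms, is the right one. But there is a concrete arithmetic error that derails the computation: for $q(m,n)=mn$ the discriminant is $D=B^2-4AC=1$, not $\tfrac14$. (Check at the superbase $\bigl\{\vect{1}{1},\vect{1}{2},-\vect{2}{3}\bigr\}$: $r=2$, $t=1$, $s=6$, hence $e=-3$, $f=7$, $g=5$ and $-ef-fg-eg=21-35+15=1$; the same normalization is forced by the paper's Hata computation, which uses $\arcsinh\bigl(\frac{e}{rt}\cdot\frac{\sqrt D}{2}\bigr)=\arcsinh\bigl(\frac{ad+bc}{2abcd}\bigr)$.) With $D=\tfrac14$ you are forced to conclude $T=\tfrac14(17-36\gamma)$ and you defer this to ``what the explicit crown evaluation must deliver'' --- but it cannot deliver that: $T$ is a pure telescoping sum whose crown contributions vanish (since $\frac{e}{r^2t^2}=\frac{e}{rt}\cdot\frac{1}{rt}$ is dominated by $\max\frac{1}{rt}$ times the already-convergent weight-one crown sum) and whose surviving root contributions are the rational numbers $\frac{e_0}{r_0^2t_0^2}=\frac{2n+1}{n^2(n+1)^2}=\frac{1}{n^2}-\frac{1}{(n+1)^2}$ attached to the roots $(\frac{1}{n+1},\frac{1}{n})$ of the subtrees hanging off the lake, so $T=-\sum_{n\ge1}\bigl(\frac1{n^2}-\frac1{(n+1)^2}\bigr)=-1$ exactly. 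No $\gamma$ can appear in $T$; the $\gamma$ in the answer comes entirely from $-6S_1$. With the correct $D=1$ everything closes: $S_2=-6S_1-T=-6(2\gamma-1)+1=7-12\gamma$, whereas your normalization outputs $28-48\gamma$, four times the stated value.

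Two secondary issues. The sum cannot be organized as a single admissible half rooted at $E^*=\bigl\{\vect{0}{1},\vect{1}{1}\bigr\}$: the region $\pm\vect{0}{1}$ is a lake ($q(0,1)=0$), so denominators vanish along the leftmost branch; the correct decomposition, implicit in the paper's Hata argument, is into the subtrees rooted at the pairs $(\frac{1}{n+1},\frac{1}{n})$, with $\mathcal F^*$ excluded as lake-adjacent data rather than appearing as crown elements. Also, the ``primitive'' $G$ you propose (cube sums ``matching \eqref{eq_stu5}'') is the wrong object and is not needed: the quantity telescoped is literally $\frac{e}{r^2t^2}$ itself, each interior edge cancelling between its two endpoints, while \eqref{eq_stu5} belongs to the edge-label $\frac{1}{efg}$ story, not to \eqref{eq_stu3}. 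Your point about justifying the vanishing of the crown terms is well taken and is indeed the only analytic input beyond Hata's $S_1=2\gamma-1$.
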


\subsection{Proofs of Theorem~\ref{thm_6} and Theorem~\ref{thm_7}}\label{sec_proofs}

We consider the case of a non-square discriminant $D>0$. Throughout, $\mathcal T$ denotes a topograph of discriminant $D$, with edge labels $(e,f,g)$ and adjacent region labels $(r,s,t)$ at a vertex as in Section~\ref{sec_topographs}.

For $D>0$, there is a unique periodic bi-infinite \emph{river} (a bi-infinite path) on $\mathcal T$ which separates regions with labels of opposite signs.

If $D=m^2$ then there are two {\it lakes} in the topograph, i.e. two regions with label $0$. If $D=0$ then there is exactly one lake; see details about rivers and lakes in \cite{conway1997sensual, o2024topographs, o2024topographs}.  Note that in the case $D\geq 0$ the values $\big|\frac{e}{rt}\big|,\big|\frac{1}{e}\big|$ on crowns $C_N$ tend to zero as $N\to\infty$ if we consider $\mathcal T'$ which does not intersect river and lakes, and crowns $C_N\subset \mathcal T''\subset \mathcal T$. Thus Theorem~\ref{thm_6} follows from using methods of Section~\ref{sec_ideas} and telescoping identities \eqref{eq_stu}, \eqref{eq_stu2} and Lemma~\ref{lemma_main}.

Than the second part of Theorem~\ref{thm_7}  is obtained by telescoping identities \eqref{eq_stu5} (for $D=0$).
The first part of Theorem~\ref{thm_7} (actually, as well as the first part) is equivalent (up to multiplication by a constant) to well-known identity 

$$\sum_{(m,n)=1}\frac{1}{m^2n^2(m+n)^2}=\frac{1}{24}$$

Theorem~\ref{thm_7} is also can be deduced from the case $D>0$ by taking the limit $D\to 0$.


\subsection{Non-square positive discriminants: proof of Corollary~\ref{cor}}\label{sec:non-square-proof}

 Following O’Sullivan, define the modified topograph $\mathcal T^\star$ by keeping $\mathcal T$ unchanged except that every \emph{river edge} is relabeled by $\sqrt{D}$; the region labels off the river remain unchanged. One then sums \emph{edge} contributions over $\mathcal T^\star$ modulo the river’s fundamental period. In O’Sullivan’s formulation this yields the class number series for $D>0$ in terms of edge labels \cite[Thm.~9.2]{o2024topographs}: 
\[
D^{3/2}\!\!\sum_{(e,f,g)\in \mathcal T^\star/\text{period}}\frac{1}{|efg|}\;=\;2\log\varepsilon_D,
\]
with the sum taken over one river period of $\mathcal T^\star$ (one term per vertex). 

Now, take any edge $E$ with a source on the river and the target outside of the river, as a root for $\mathcal T'$, a half of $\mathcal T$. As stated in \eqref{top4} 
\begin{equation*} 
 \sum_{\substack{
\topoo{\mathcal{T}'}
}} 
\frac{1}{|efg|} =  \frac{1}{D}\left(\frac{\operatorname{arctanh}(\frac{\sqrt{D}}{e})}{\sqrt{D}}-\frac{1}{e}\right),  
\end{equation*}
where $e$ is the label on $E$.

Summing over all the vertices in the river period (recall that the labels on the river edges are $\sqrt{D}$)  and all such edges we obtain 

{\bf Corollary~\ref{cor}.}
\[
2\log\varepsilon_D=D^{3/2}\!\!\sum_{(e,f,g)\in \mathcal T^\star/\text{period}}\frac{1}{|efg|}=\]
\[=\sum_{e\in \text{river}/\text{period}} \frac{1}{e} + \sum_{e\in \text{river}/\text{period}} \left(\arctanh\frac{\sqrt{D}}{|e|}-\frac{1}{e}\right) = \sum_{e\in \text{river}/\text{period}} \arctanh\frac{\sqrt{D}}{|e|},
\]
where the sum ranges only over edges $E$ with labels $e$, adjacent to the vertices on the topograph's river, but not belonging to the river, modulo river period, hence the sum is finite. 

\begin{example}
For $q(x,y)=x^2-2y^2$ we obtain
$$
2\log (3+2\sqrt2) =2\log \varepsilon_8 = 4\arctanh\frac{2\sqrt2}{4}=4\frac{1}{2}\log\frac{1+\frac{1}{\sqrt2}}{1-\frac{1}{\sqrt2}}= 2\log (3+2\sqrt2).
$$
\end{example}

\subsection{Class number for square discriminants}\label{subsec:square-D}

In the case of a \emph{square} discriminant $D=m^{2}$,  the usual topograph features a \emph{river} between two regions ({\it lakes}) with zero labels. 

Define an auxiliary function
\[
W_1(x) = 2\int_0^\infty \Re\left( \frac{y}{(y^2+1)(e^{\pi(y+2ix)} - 1)} \right) dy.
\]
It is immediate that $W_1(x)$ is $1$-periodic and even.
\begin{theorem}[\cite{digamma}]\label{thm:main}
For each real $x$ with $\tfrac12\pm x\notin\{0,-1,-2,\dots\}$, 
\begin{equation}\label{eq:main-id}
2W_1(x) + \log 4 + \psi\left(\tfrac{1}{2} + x\right) + \psi\left(\tfrac{3}{2} - x\right) = 0.
\end{equation}  
\end{theorem}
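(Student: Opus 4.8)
The plan is to prove the equivalent identity
\[
W_1(x)=-\tfrac12\log 4-\tfrac12\psi\!\left(\tfrac12+x\right)-\tfrac12\psi\!\left(\tfrac32-x\right),
\]
first for $0<x<\tfrac12$, and then to extend it to the remaining admissible $x$ by the reflection formula $\psi(\tfrac12-x)-\psi(\tfrac12+x)=-\pi\tan\pi x$ together with the $1$-periodicity and evenness of $W_1$ (equivalently, by redoing the contour computation below in each fundamental strip and checking that the extra residues recombine). After the substitution $y=2t$ one has $W_1(x)=2\,\Re\,G(x)$ with
\[
G(x)=\int_0^\infty\frac{t\,dt}{\left(t^2+\tfrac14\right)\left(e^{2\pi(t+ix)}-1\right)},
\]
and putting $\tau=t+ix$ turns this into $G(x)=\int_{L_x}H(\tau)\,d\tau$, where $H(\tau)=\dfrac{\tau-ix}{\left((\tau-ix)^2+\tfrac14\right)\left(e^{2\pi\tau}-1\right)}$ and $L_x$ is the ray $\{\Re\tau\ge0,\ \operatorname{Im}\tau=x\}$.

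First I would deform $L_x$ down onto the positive real axis. For $0<x<\tfrac12$ the only singularity swept is the simple pole of $H$ at $\tau=0$ coming from $e^{2\pi\tau}-1$; the zeros $\tau=i(x\pm\tfrac12)$ of $(\tau-ix)^2+\tfrac14$ lie outside the strip $0<\operatorname{Im}\tau<x$. Indenting the corner at $\tau=0$ along a small quarter-circle, $G(x)$ becomes the sum of (i) $\int_0^\infty H(\tau)\,d\tau$ along the real axis, (ii) the vertical segment integral $\int_{ix}^{0}H(\tau)\,d\tau$, and (iii) the quarter-arc term $-\tfrac{i\pi}{2}\operatorname{Res}_{\tau=0}H$; since $\operatorname{Res}_{\tau=0}H$ is purely imaginary, the logarithmic singularities of (i) and (ii) at $\tau=0$ are purely imaginary and disappear on taking real parts, so $W_1(x)=2\,\Re\,G(x)$ is the sum of the real parts of (i)--(iii).

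Next I would evaluate these. Partial-fractioning $\dfrac{\tau-ix}{(\tau-ix)^2+\frac14}=\tfrac12\left(\dfrac1{\tau-i(x+\frac12)}+\dfrac1{\tau-i(x-\frac12)}\right)$, taking real parts (on $\RR$ the factor $1/(e^{2\pi\tau}-1)$ is real), and applying Binet's second formula $\int_0^\infty\dfrac{\tau\,d\tau}{(\tau^2+a^2)(e^{2\pi\tau}-1)}=\tfrac12\left(\log a-\psi(a)-\tfrac1{2a}\right)$ with $a=x+\tfrac12$ and $a=\tfrac12-x$ gives
\[
2\,\Re\int_0^\infty H=\tfrac12\log\!\left(\tfrac14-x^2\right)-\tfrac12\psi\!\left(\tfrac12+x\right)-\tfrac12\psi\!\left(\tfrac12-x\right)-\frac1{4\left(\frac14-x^2\right)}.
\]
On the segment $\tau=is$, $0<s<x$, one has $H(is)\,i=\dfrac{-(s-x)}{\left(\frac14-(s-x)^2\right)\left(e^{2\pi is}-1\right)}$ with $\Re\,\dfrac1{e^{2\pi is}-1}=-\tfrac12$, so an elementary integration gives the contribution of (ii) to $W_1$ as $-\tfrac12\log(1-4x^2)$; and since $\operatorname{Res}_{\tau=0}H=\dfrac{-ix}{2\pi\left(\frac14-x^2\right)}$, the contribution of (iii) to $W_1$ is $-2\,\Re\!\left(\tfrac{i\pi}{2}\operatorname{Res}_{\tau=0}H\right)=-\dfrac{x}{2\left(\frac14-x^2\right)}$.

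Finally I would assemble: using $\log(1-4x^2)=\log4+\log(\tfrac14-x^2)$ the two logarithms collapse to $-\tfrac12\log4$, the rational terms combine to $-\dfrac1{1-2x}$, and $\psi(\tfrac32-x)=\psi(\tfrac12-x)+\dfrac2{1-2x}$ absorbs precisely that term, leaving $W_1(x)=-\tfrac12\log4-\tfrac12\psi(\tfrac12+x)-\tfrac12\psi(\tfrac32-x)$, which is \eqref{eq:main-id}. The main obstacle will be the first step: making the contour deformation with the boundary pole at $\tau=0$ rigorous --- justifying the indentation, the vanishing of the integral over the line $\Re\tau=R\to\infty$, and the absence of any crossed pole for $0<x<\tfrac12$. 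A secondary difficulty is the passage to $|x|>\tfrac12$, where one must track the residues at $\tau=i(x\pm\tfrac12)$ and verify, via the reflection formula, that they reorganise into the same closed form.
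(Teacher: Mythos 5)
The paper does not prove this theorem; it imports it from \cite{digamma}, so your argument is a genuine supplement rather than a rival to an in-paper proof. Your core computation is correct for $0<x<\tfrac12$: after $y=2t$ and $\tau=t+ix$ the only singularity swept by the deformation is the boundary pole at $\tau=0$, whose residue $\tfrac{-ix}{2\pi(\frac14-x^2)}$ is purely imaginary, so taking real parts does kill the logarithmic divergences of the two straight pieces; Binet's second formula $\int_0^\infty\frac{t\,dt}{(t^2+a^2)(e^{2\pi t}-1)}=\tfrac12\bigl(\log a-\psi(a)-\tfrac1{2a}\bigr)$ applied with $a=x+\tfrac12$ and $a=\tfrac12-x$, the segment integral $-\tfrac12\log(1-4x^2)$, and the arc term $-\tfrac{x}{2(\frac14-x^2)}$ assemble exactly as you say, with $\psi(\tfrac32-x)=\psi(\tfrac12-x)+\tfrac{2}{1-2x}$ absorbing the rational remainder. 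I verified each of these constants; they are right.

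The gap is in your final step, and it is instructive: the extension ``to the remaining admissible $x$ by the $1$-periodicity and evenness of $W_1$'' cannot work, because the right-hand side is not even. Writing $R(x)=-\tfrac12\log4-\tfrac12\psi(\tfrac12+x)-\tfrac12\psi(\tfrac32-x)$ and using the recurrence for $\psi$ gives
\begin{equation*}
R(x)-R(-x)=\frac{1}{1+2x}-\frac{1}{1-2x}=\frac{-4x}{1-4x^2},
\end{equation*}
which is nonzero for $0<|x|<\tfrac12$, while $W_1(x)=W_1(-x)$ always. So the identity as quoted ``for each real $x$'' is false outside $[0,1]$; what is true (and all the paper uses, since it only needs $x=r/m\in(0,1)$) is the identity on $[0,1]$. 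On that interval your proof does close up: $R$ is invariant under $x\mapsto 1-x$ (the two $\psi$ terms swap), and so is $W_1$ (evenness composed with periodicity), so the case $\tfrac12<x<1$ follows from $0<x<\tfrac12$ by this symmetry without any further contour work; alternatively, your plan of tracking the residue at $\tau=i(x-\tfrac12)$, whose real contribution is $\pi\tan\pi x$, reproduces the reflection-formula correction. I recommend you state and prove the identity for $x\in[0,1]$ only, and note explicitly that it does not extend evenly in $x$.
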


In particular, \eqref{eq:main-id} holds for all rational numbers \(x = r/m \in (0,1)\).

\begin{theorem}\cite[Thm.~9.4]{o2024topographs}\label{thm:Osull-94} We refer to the notation of \cite{o2024topographs}.
Let $\mathcal T$ be any topograph of square discriminant $D=m^{2}>1$. Form the modified topograph $\mathcal T^{\star}$ by
keeping $\mathcal T$ unchanged except that every river edge directed rightwards is relabeled by $\sqrt{D}=m$.
Let $r$ and $s$ be the congruence classes modulo $m$ of the region labels adjacent to a lake. Then
\[
m^{3}\!\!\sum_{\substack{(e,f,g)\in T^{\star}\\ \text{vertex not on a lake}}}\frac{1}{|efg|}
\;+\;W_{1}\!\left(\frac{r}{m}\right)\;+\;W_{1}\!\left(\frac{s}{m}\right)
\;=\;2\log\!\left(\frac{m}{2\,\gcd(m,r)}\right).
\]
(Each vertex contributes one term in the sum.) For $D=m=1$, the identity remains valid after adding $2$
to the right-hand side.
\end{theorem}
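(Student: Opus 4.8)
The strategy is to reduce Theorem~\ref{thm:Osull-94} to a telescoping computation over admissible halves of $\mathcal{T}^\star$, exactly parallel to the non-square case treated in Section~\ref{sec:non-square-proof}, but now accounting for the two lakes rather than the river period. First I would fix the modified topograph $\mathcal{T}^\star$ and observe that, since the river separates the two lakes (regions labeled $0$), removing the river cuts $\mathcal{T}^\star$ into pieces, each of which is an admissible upper half relative to a root edge $E$ whose source lies on the river and whose target lies off it. For each such root $E$ with label $e$, formula~\eqref{top4} of Theorem~\ref{thm_6} gives
\[
\sum_{\topoo{\mathcal{T}'}}\frac{1}{|efg|}=\frac{1}{D}\left(\frac{\arctanh(\tfrac{\sqrt{D}}{e})}{\sqrt{D}}-\frac{1}{e}\right),
\]
provided the half $\mathcal{T}'$ does not meet the lakes; the hypothesis $D=m^2>1$ together with the smallness of $|1/e|$ on crowns away from the lakes (as noted in Section~\ref{sec_proofs}) guarantees admissibility and convergence. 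Summing these contributions over all root edges abutting the river in one fundamental period, and using that the river edges themselves carry label $m$, telescopes the interior terms and reproduces $m^3\sum 1/|efg|$ over all vertices \emph{not on a lake}.

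The genuinely new ingredient, compared with Corollary~\ref{cor}, is the treatment of the two vertices (or rather the two finite families of vertices) adjacent to the lakes, where a label vanishes and~\eqref{top4} no longer applies directly. Here I would isolate the contribution of the ``missing'' halves that would have continued into the lake regions. In the non-square case those halves simply did not exist; in the square case the lake forces the topograph to terminate, and the defect is measured by a boundary term attached to the lake. The claim is that this defect equals $W_1(r/m)+W_1(s/m)$, where $r,s$ are the residues mod $m$ of the region labels flanking a lake. To see this I would use Theorem~\ref{thm:main}: the identity~\eqref{eq:main-id} rewrites $W_1(r/m)$ in terms of digamma values $\psi(\tfrac12+\tfrac{r}{m})$ and $\psi(\tfrac32-\tfrac{r}{m})$, and these digamma values are exactly the ``regularized tails'' of the harmonic-type series one obtains when telescoping $1/|efg|$ along the sequence of edges marching toward a lake (the labels there grow arithmetically, as in the Hata computation of Section~\ref{sec_hata}, producing $\sum 1/e_k \sim \log k + \text{const}$ whose regularized value is a digamma). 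Matching the constant terms then pins the right-hand side to $2\log(m/(2\gcd(m,r)))$; the $\gcd$ appears because the arithmetic progression of edge labels near a lake has common difference governed by $m$ but starting value governed by $\gcd(m,r)$, and the $2$ and $\log 4$ bookkeeping comes precisely from the $\log 4$ in~\eqref{eq:main-id}.

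Concretely the steps are: (1) decompose $\mathcal{T}^\star$ minus river into admissible halves and apply~\eqref{top4} to each; (2) telescope and collect, over one river period, to get $m^3\sum 1/|efg| = \sum_{e\in\text{river}/\text{period}}\arctanh(\sqrt{D}/|e|)$ plus two lake-defect terms; (3) evaluate each lake-defect term by telescoping $1/e$ along the edges approaching the lake, recognizing the arithmetic progression of labels and hence a digamma-type limit; (4) invoke Theorem~\ref{thm:main} to convert the pair of digamma values at a lake into $-2W_1(r/m)-\log 4$ (so that after moving terms, $W_1(r/m)+W_1(s/m)$ appears on the left), and (5) collect the surviving constants into $2\log(m/(2\gcd(m,r)))$, treating $D=m=1$ separately where one extra lake-adjacent vertex contributes an additional $+2$. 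The main obstacle I anticipate is step~(3)–(4): identifying \emph{exactly} which regularization of the divergent lake tail corresponds to the particular $W_1$ integral, and verifying that the residues $r,s$ entering O'Sullivan's $W_1$ are the same residues that appear in my arithmetic progression of edge labels — this is a careful bookkeeping of signs, orientations of river edges, and the $\gcd(m,r)$ factor, and it is where the special case $D=1$ (with its $+2$ correction, reflecting that the two lakes then share a vertex) must be checked by hand.
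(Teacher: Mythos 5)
First, a point of reference: the paper does not prove this statement. It is imported verbatim from O'Sullivan \cite[Thm.~9.4]{o2024topographs} and then \emph{used as an input} in Section~\ref{subsec:square-D}: the paper takes the identity for granted, evaluates $m^3\sum 1/|efg|$ independently via \eqref{top4} and the arithmetic progression of edge labels around each lake, and by comparing the two expressions \emph{deduces} the corollary $\sum_{e\in\text{river}}\arctanh(m/|e|)=\tfrac12\log\bigl(r(m-r)s(m-s)\bigr)$. Your proposal is essentially this computation run in reverse, and that is where it breaks.

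The concrete gap is in your steps (2) and (5). Decomposing the vertices not on a lake, you get three kinds of contributions: the halves rooted at edges leaving the two lake boundaries (these do yield, via \eqref{top4} and the labels $2r+(2k+1)m$, a convergent sum of $\arctanh(m/|e|)-m/e$ whose divergent pieces cancel against the digamma regularization, and Theorem~\ref{thm:main} then converts the digammas into $W_1(r/m)$, $W_1(s/m)$ plus explicit logarithms — this part of your plan is sound); but also the vertices on the river itself together with the halves hanging off them. Their total contribution is precisely $\sum_{e}\arctanh(m/|e|)$ over edges adjacent to the river, and without its value you cannot produce the constant $2\log\bigl(m/(2\gcd(m,r))\bigr)$ — the explicit lake computation alone leaves a residual $\tfrac12\log\bigl(r(m-r)s(m-s)\bigr)$ unaccounted for. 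That residual is exactly the quantity the paper obtains \emph{as a consequence of} the theorem you are proving, so invoking it would be circular; no independent evaluation of it is available from the paper's machinery (O'Sullivan's proof ultimately rests on $h(m^2)=\varphi(m)$ and Dirichlet-type input, not on telescoping). Two further inaccuracies: the families of lake-adjacent vertices are \emph{infinite} (a lake's boundary is a bi-infinite path), not "two finite families"; and for square discriminant there is no "fundamental period" of the river — the river is a finite path terminating at the lakes — so the phrase "summing over one fundamental period" transplanted from Corollary~\ref{cor} does not apply here.
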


Let $\gcd(m,r)=1$. Near the lake with nearby label equal to $r \pmod m$, the labels on edges are

$$e_i=2r+(2k+1)m, k=0,1,\dots, l_i =  2r-(2k+1)m, k=1,2,\dots$$

Thus, using \eqref{top4} we evaluate the sum of $\frac{1}{|efg|}$ as

$$\sum (\arctanh\frac{m}{|e_i|}-\frac{m}{e_i})+\sum (\arctanh\frac{m}{|l_i|}-\frac{m}{l_i}).$$

Thus, after simplification, we obtain

$$2\log\!\left(\frac{m}{2\,\gcd(m,r)}\right)-(W_{1}\!\left(\frac{r}{m}\right)\;+\;W_{1}\!\left(\frac{s}{m}\right)) =m^{3}\!\!\sum_{\substack{(e,f,g)\in T^{\star}\\ \text{vertex not on a lake}}}\frac{1}{|efg|}=$$
$$=\sum_{\text{river}} \arctanh\frac{m}{|e|} -\frac{1}{2}\left(\log\frac{r(m-r)}{m^2}+\psi(\frac{r}{m}+\frac{1}{2})+\psi(\frac{3}{2}-\frac{r}{m})\right)-$$ 
$$-\frac{1}{2}\left(\log\frac{s(m-s)}{m^2}+\psi(\frac{s}{m}+\frac{1}{2})+\psi(\frac{3}{2}-\frac{s}{m})\right).
$$

Using \eqref{eq:main-id} we obtain
\begin{corollary}
$$\sum_{e\in \text{river}} \arctanh\frac{m}{|e|} = \frac{1}{2}\log(r(m-r)s(m-s)).$$
\end{corollary}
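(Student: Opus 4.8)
The plan is to evaluate the quantity $m^{3}\sum\frac{1}{|efg|}$, summed over all vertices of $\mathcal T^{\star}$ that do not lie on a lake, in two independent ways and then equate them. The first evaluation is O'Sullivan's Theorem~\ref{thm:Osull-94}: under the standing hypothesis $\gcd(m,r)=1$ (which also gives $\gcd(m,s)=1$, since along a lake shore the region labels form an arithmetic progression of common difference $m$, so $r$ and $s$ are units modulo $m$), its right-hand side is simply $2\log\!\big(\tfrac{m}{2}\big)-W_{1}\!\big(\tfrac{r}{m}\big)-W_{1}\!\big(\tfrac{s}{m}\big)$. The second evaluation comes from telescoping, exactly as in Section~\ref{sec:non-square-proof}: every vertex off the lakes is either one of the finitely many river vertices lying strictly between the two lakes, or it lies in a unique admissible upper half $\mathcal T'$ whose root $E$ is an off-river edge attached to a river or lake-shore vertex; to such a root I would apply \eqref{top4} multiplied by $m^{3}$, giving $m^{3}\sum_{\mathcal T'}\tfrac{1}{|efg|}=\arctanh\tfrac{m}{|e_{0}|}-\tfrac{m}{|e_{0}|}$. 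Admissibility holds because away from the river and the lakes the crown ratios $|1/e|$ tend to $0$.

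The heart of the argument is to sum these contributions in the right groupings. For a between-lakes river vertex $V$, the three incident edges consist of two river edges (labelled $m$) and one off-river edge of label $e_{V}$, so $V$ itself contributes $m^{3}\cdot\tfrac{1}{m^{2}|e_{V}|}=\tfrac{m}{|e_{V}|}$, which exactly cancels the $-\tfrac{m}{|e_{V}|}$ produced by \eqref{top4} at the root $E$ hanging off $V$; together they leave only $\arctanh\tfrac{m}{|e_{V}|}$, and since there are finitely many such $V$ these assemble into the finite sum $\sum_{e\in\mathrm{river}}\arctanh\tfrac{m}{|e|}$ of the statement. The remaining roots are the lake-shore edges: near the lake with nearby label $\equiv r\pmod m$ their labels run through the two arithmetic progressions $2r+(2k+1)m$ $(k\ge 0)$ and $(2k+1)m-2r$ $(k\ge 1)$, the single edge of label $|2r-m|<m$ being omitted because it belongs to the river. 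Here one uses two elementary facts: the $\arctanh$-terms telescope, since $\arctanh\tfrac{m}{2r+(2k+1)m}=\tfrac12\log\tfrac{r+(k+1)m}{r+km}$ and similarly for the other progression; and the $\tfrac{m}{|e|}$-terms resum to digamma values via $\sum_{k=0}^{K}\tfrac{1}{k+a}=\log K-\psi(a)+o(1)$. Taking both limits along matched partial sums shows that the $r$-lake shore contributes $-\tfrac12\big(\log\tfrac{r(m-r)}{m^{2}}-\psi(\tfrac{r}{m}+\tfrac12)-\psi(\tfrac32-\tfrac{r}{m})\big)$, and symmetrically for the $s$-lake.

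It then remains to equate the two evaluations and clear the transcendental terms. Solving for $\sum_{e\in\mathrm{river}}\arctanh\tfrac{m}{|e|}$ and invoking Theorem~\ref{thm:main} — by \eqref{eq:main-id} one has $W_{1}\!\big(\tfrac{x}{m}\big)=-\log 2-\tfrac12\big(\psi(\tfrac{x}{m}+\tfrac12)+\psi(\tfrac32-\tfrac{x}{m})\big)$ — the digamma pair produced by each lake-shore computation cancels the digamma pair concealed inside the corresponding $W_{1}$, so both $W_{1}$'s disappear. Only logarithms survive, and $2\log\tfrac{m}{2}+\log 4+\tfrac12\log\tfrac{r(m-r)}{m^{2}}+\tfrac12\log\tfrac{s(m-s)}{m^{2}}$ collapses to $\tfrac12\log\!\big(r(m-r)s(m-s)\big)$, which is the asserted identity. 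The exceptional case $D=m=1$, where one adds $2$ to O'Sullivan's right-hand side and the two lakes coalesce, would be checked separately against $q(x,y)=xy$.

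The step I expect to be the main obstacle is the combinatorial and analytic bookkeeping around the lakes and the river. One must pin down exactly which edges serve as roots — in particular that precisely one shore edge per lake, the one of label $|2r-m|$, lies on the river and is therefore excluded and relabelled $m$ — and one must carry out the resummation with care, since over a lake shore both $\sum\arctanh\tfrac{m}{|e|}$ and $\sum\tfrac{m}{|e|}$ diverge and only their difference is finite, so the telescoping of the logarithms and the digamma asymptotics have to be paired off inside the same partial sums, with the correct branch of $\arctanh$ chosen on the side of each lake where the shore labels are negative. Once the signs and branches are fixed correctly, the cancellation against Theorem~\ref{thm:main} is forced and the collapse to a single logarithm is purely formal.
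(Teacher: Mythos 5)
Your proof is correct and follows essentially the same route as the paper: two evaluations of $m^{3}\sum 1/|efg|$ (O'Sullivan's Theorem~\ref{thm:Osull-94} versus telescoping via \eqref{top4} with roots hanging off the river and the lake shores), resummation of the shore arithmetic progressions into logarithms and digamma values, and cancellation of the $W_1$ terms through \eqref{eq:main-id}. Your sign on the digamma pair in the lake-shore contribution is the internally consistent one (the paper's displayed intermediate formula appears to carry a sign typo there), so nothing needs to be changed.
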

\begin{example}
For $m=3$ we get only one edge from the river, and, indeed, $\arctanh{\frac{3}{5}}=\frac{1}{2}\log 4.$
\end{example}

Then, recall the class-number identity (9.29)\cite{o2024topographs} (for \(m>1\))
\begin{equation}\label{eq:929}
h(m^2)\,\log\!\Big(\frac{m}{2}\Big)
= \end{equation}
$$=\sum_{\substack{b^2-4ac=m^2\\ \gcd(a,b,c)=1\\ a,c,\, a+b+c>0}}\frac{m^3}{3b(b+2a)(b+2c)}
\;+\; \sum_{\substack{[a,b,c]\ Z\text{-reduced}\\ b^2-4ac=m^2\\ \gcd(a,b,c)=1}}\frac{m}{b}
\;+\; \sum_{\substack{1\le r<m\\ (r,m)=1}} W_1\!\left(\frac{r}{m}\right).
$$
Here the first sum ranges over primitive triples \([a,b,c]\in\ZZ^3\) of discriminant \(m^2\) with the sign condition \(a,c,a+b+c>0\);
the second sum runs over primitive \(\ZZ\)-reduced representatives \([a,b,c]\) (in the paper's sense of reduction).

From \eqref{eq:main-id}  and the Gauss multiplication theorem for \(\psi\), one finds in \cite{digamma} that
\begin{equation}\label{eq:W1-sum}
\sum_{\substack{1\le r<m\\ (r,m)=1}} W_1\!\left(\frac{r}{m}\right)
\;=\; \varphi(m)\log\!\Big(\frac{m}{2}\Big)
\;+\; \varphi(m)\sum_{p\mid m}\frac{\log p}{p-1}
\;-\; m\sum_{d\mid m}\frac{\mu(d)}{d}\,\psi\!\left(\frac{m}{2d}\right).
\end{equation}

It holds that \(h(m^2)=\varphi(m)\) for \(m>1\).
Substituting \eqref{eq:W1-sum} into \eqref{eq:929} and replacing \(h(m^2)\) by \(\varphi(m)\), the terms
\(\varphi(m)\log\!\bigl(\tfrac{m}{2}\bigr)\) cancel and we obtain:

\begin{corollary}\label{prop:main}
For every integer \(m>1\),
\begin{equation*}\label{eq:final-identity}
\sum_{\substack{b^2-4ac=m^2\\ \gcd(a,b,c)=1\\ a,c,\, a+b+c>0}}\frac{m^3}{3b(b+2a)(b+2c)}
\;+\; \sum_{\substack{[a,b,c]\ Z\text{-reduced}\\ b^2-4ac=m^2\\ \gcd(a,b,c)=1}}\frac{m}{b}
\;=\; m\sum_{d\mid m}\frac{\mu(d)}{d}\,\psi\!\Big(\frac{m}{2d}\Big)
\;-\; \varphi(m)\sum_{p\mid m}\frac{\log p}{p-1}\, .
\end{equation*}
\end{corollary}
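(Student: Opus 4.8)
The plan is to obtain Corollary~\ref{prop:main} by combining three ingredients that are already assembled in the text: O'Sullivan's square-discriminant class-number identity \eqref{eq:929}, the closed evaluation \eqref{eq:W1-sum} of the sum $\sum_{(r,m)=1}W_1(r/m)$, and the classical fact $h(m^2)=\varphi(m)$ for $m>1$. Granting these, the corollary is pure substitution and cancellation; nonetheless I would lay the steps out explicitly, since the sign of the Mertens-type term $\varphi(m)\sum_{p\mid m}\tfrac{\log p}{p-1}$ is the one place a bookkeeping slip would hide.

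Concretely, I would write $S_1$ and $S_2$ for the two arithmetic series on the left-hand side of the asserted identity — which are exactly the first two sums on the right-hand side of \eqref{eq:929} (same sign conventions, same notion of $Z$-reduction) — and set $\Sigma_\psi = m\sum_{d\mid m}\tfrac{\mu(d)}{d}\,\psi(m/2d)$ and $\Sigma_p = \varphi(m)\sum_{p\mid m}\tfrac{\log p}{p-1}$. Then \eqref{eq:W1-sum} reads $\sum_{(r,m)=1}W_1(r/m) = \varphi(m)\log(m/2) + \Sigma_p - \Sigma_\psi$. Substituting this into \eqref{eq:929} and replacing $h(m^2)$ by $\varphi(m)$ yields
\[
\varphi(m)\log(m/2) \;=\; S_1 + S_2 + \varphi(m)\log(m/2) + \Sigma_p - \Sigma_\psi,
\]
the copies of $\varphi(m)\log(m/2)$ cancel, and transposing leaves $S_1 + S_2 = \Sigma_\psi - \Sigma_p$, which is precisely the claim. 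The only genuine check here is that $S_1,S_2$ match the two sums in \eqref{eq:929} verbatim, so that nothing is lost or double-counted, and that $m>1$ is carried throughout (the case $m=1$ being excluded both in \eqref{eq:929} and in $h(m^2)=\varphi(m)$).

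The step I expect to carry the actual weight is not this assembly but the provenance of the two inputs. The evaluation \eqref{eq:W1-sum} is the analytically substantial one: it follows from the reflection identity \eqref{eq:main-id} of Theorem~\ref{thm:main} applied at $x=r/m$, combined with the Gauss multiplication theorem for $\psi$ and M\"obius inversion over $r$ coprime to $m$ — that is where both $\sum_{d\mid m}\mu(d)/d\,\psi(m/2d)$ and $\sum_{p\mid m}\log p/(p-1)$ are produced — so a fully self-contained proof would have to reproduce that computation and track the factor $\tfrac12$ in \eqref{eq:main-id} carefully. The other input, $h(m^2)=\varphi(m)$, rests on the classification of primitive binary quadratic forms of square discriminant (each class corresponding to a residue class $r \bmod m$ with $\gcd(r,m)=1$) and should be cited. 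One could instead bypass \eqref{eq:929} and argue intrinsically from \eqref{top4}, summing edge contributions over one river period of the modified topograph $\mathcal T^\star$ together with the lake corrections and invoking Theorem~\ref{thm:Osull-94}; but this route still feeds through \eqref{eq:main-id}, so it does not remove the main obstacle, which is matching all of the normalizing constants ($m$, $m^3$, and the factor $\tfrac12$) consistently across \eqref{top4}, Theorem~\ref{thm:Osull-94}, and \eqref{eq:W1-sum}.
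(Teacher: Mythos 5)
Your proposal is correct and follows essentially the same route as the paper: substitute the evaluation \eqref{eq:W1-sum} into O'Sullivan's identity \eqref{eq:929}, replace $h(m^2)$ by $\varphi(m)$, cancel the $\varphi(m)\log(m/2)$ terms, and transpose. The sign bookkeeping in your display is also consistent with the stated identity.
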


\begin{corollary} In case $m=p$, with $p$ an odd prime, we obtain

$$
\sum_{\substack{b^2-4ac=p^2\\ \gcd(a,b,c)=1\\ a,c,\, a+b+c>0}}\frac{p^3}{3b(b+2a)(b+2c)}
\;+\; \sum_{\substack{[a,b,c]\ Z\text{-reduced}\\ b^2-4ac=p^2\\ \gcd(a,b,c)=1}}\frac{p}{b}
\;=\; p\psi(\frac{p}{2})-\psi(\frac{1}{2})- \log p.
$$
\end{corollary}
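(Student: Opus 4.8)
The plan is to obtain this statement as the \emph{prime specialization} of Corollary~\ref{prop:main}: I would set $m=p$ with $p$ an odd prime (so that $m>1$, as required there) and evaluate the two arithmetic sums on the right-hand side of Corollary~\ref{prop:main} explicitly, using that a prime has only two divisors. The left-hand side of the asserted identity is literally the double sum of Corollary~\ref{prop:main} with $m$ replaced by $p$, so no manipulation is needed there.

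First I would expand the M\"obius-twisted digamma term. Since the divisors of $p$ are exactly $1$ and $p$, with $\mu(1)=1$ and $\mu(p)=-1$, one gets
\[
m\sum_{d\mid m}\frac{\mu(d)}{d}\,\psi\!\Big(\frac{m}{2d}\Big)
= p\left(\psi\!\Big(\frac{p}{2}\Big) - \frac{1}{p}\,\psi\!\Big(\frac{1}{2}\Big)\right)
= p\,\psi\!\Big(\frac{p}{2}\Big) - \psi\!\Big(\frac{1}{2}\Big).
\]
Next I would handle the term $\varphi(m)\sum_{q\mid m}\frac{\log q}{q-1}$, whose inner sum ranges over prime divisors of $m$. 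For $m=p$ there is a single prime divisor, so this collapses to $\varphi(p)\cdot\frac{\log p}{p-1}=(p-1)\cdot\frac{\log p}{p-1}=\log p$.

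Subtracting the two evaluations, the right-hand side of Corollary~\ref{prop:main} becomes $p\,\psi(p/2)-\psi(1/2)-\log p$, which is exactly the claimed value. There is no genuine obstacle here: all of the analytic and combinatorial content sits in Corollary~\ref{prop:main} (and behind it in the edge-sum formula \eqref{top4}, the class-number identity \eqref{eq:929}, and the digamma evaluation \eqref{eq:W1-sum}); taking $m$ prime is simply the case in which the Dirichlet convolutions degenerate to two-term sums, producing the clean closed form. The only point worth a sentence of care is recording that the hypothesis $m>1$ of Corollary~\ref{prop:main} is met, which holds for every odd prime $p$.
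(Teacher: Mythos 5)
Your proposal is correct and is exactly the intended derivation: the corollary is the specialization of Corollary~\ref{prop:main} to $m=p$, where the divisor sum collapses to $p\,\psi(p/2)-\psi(1/2)$ via $\mu(1)=1$, $\mu(p)=-1$, and the prime-divisor sum collapses to $\varphi(p)\cdot\frac{\log p}{p-1}=\log p$. The paper gives no separate argument for this case, and your two-term evaluations match what it implicitly relies on.
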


\begin{example}
For $m=p=3$ we have only one $Z$-reduced form $[2,5,2]$, so  we obtain $$0.37436765658297927138...+\frac{3}{5}=0.97436765658297927138...\approx$$
$$\approx 3\psi(\frac{3}{2})-\psi(\frac{1}{2})-\log 3=0.9743676592890432...$$
\end{example}

\section{Acknowledgements}

In 2024, I discovered a simple telescopic proof \cite{kalinin2024mordell} of \eqref{hurwitz} without knowing all the above story about topographs that I learned recently, thanks to users of MathOverflow, 

{\tiny \url{https://mathoverflow.net/questions/479076/is-it-known-a-sum-over-lattice-parallelograms-of-area-one-is-equal-to-pi}}

I thank Wadim Zudilin, Mikhail Shkolnikov, and Ernesto Lupercio for discussions.

%
%
%

\bibliography{../bibliography.bib}
\bibliographystyle{abbrv}

\end{document}